\newtheorem{thm}{Theorem}[section]
\newtheorem{prop}[thm]{Proposition}
\newtheorem{lemma}[thm]{Lemma}
\newtheorem{exam}[thm]{Example}
\theoremstyle{remark}
\newcommand{\id}{{\rm{id}}}
\newcommand{\Ad}{{\rm{Ad}}}
\newcommand{\BC}{\mathbf C}
\newcommand{\BK}{\mathbf K}
\newcommand{\BB}{\mathbf B}
\newcommand{\la}{\langle}
\newcommand{\ra}{\rangle}
\newcommand{\Ind}{{\rm{Ind}}}
\newcommand{\Pic}{{\rm{Pic}}}
\newcommand{\Equi}{{\rm{Equi}}}
\newcommand{\Aut}{{\rm{Aut}}}
\newcommand{\Int}{{\rm{Int}}}
\newcommand{\Ima}{{\rm{Im}}}
\newcommand{\Ker}{{\rm{Ker}}}
\newtheorem{Def}{Definition}[section]
\title{The Picard groups for unital inclusions of unital $C^*$-algebras}
\author{Kazunori Kodaka}
\address{Department of Mathematical Sciences, Faculty of Science, Ryukyu
\endgraf
University, Nishihara-cho, Okinawa, 903-0213, Japan}
\address{\sl{E-mail address}: \rm{kodaka@math.u-ryukyu.ac.jp}}
\keywords{equivalence bimodules, inclusions of $C^*$-algebras, the Picard group}
\subjclass[2010]{46L05}
\begin{document}
\maketitle
\begin{abstract}
We shall introduce the notion of the Picard group for an inclusion of $C^*$-algebras. We shall also study its basic
properties and the relation between the Picard group for an inclusion of $C^*$-algebras and the ordinary Picard group.
Furthermore, we shall give some examples of the Picard groups for unital inclusions of unital $C^*$-algebras.
\end{abstract}

\section{Introduction}\label{sec:intro}In the previous paper \cite {KT4:morita}
we introduced the notion of the strong Morita equivalence for inclusions of $C^*$-algebras.
Then in \cite [Proposition 2.3]{KT4:morita}, we showed that the strong Morita equivalence for
inclusions of $C^*$-algebras is an equivalence relation. Thus in the same way as in Brown, Green and
Rieffel \cite {BGR:linking}, we can define the Picard group for an inclusion of $C^*$-algebras.
Also, in this paper we shall study its basic properties as in \cite {BGR:linking} and \cite {Kodaka:equivariance}.
In the last section, we shall give some examples of the Picard groups for unital inclusions of unital $C^*$-algebras.
\par
For a unital $C^*$-algebra $A$,
let $M_n (A)$ be the $n\times n$-matrix
algebra over $A$ and $I_n$ denotes the unit element in $M_n (A)$. We identify $M_n (A)$ with
$A\otimes M_n (\BC)$.
\par
Let $A$ and $B$ be $C^*$-algebras and $X$ an $A-B$-bimodule. We denote its left $A$-action and
right $B$-action on $X$ by $a\cdot x$ and $x\cdot b$ for any $a\in A$, $b\in B$, $x\in X$, respectively.
Also, we denote by $\widetilde{X}$ the dual $B-A$-bimodule of $X$ and we denote by $\widetilde{x}$
the element in $\widetilde{X}$ induced by $x\in X$.
\par
For each $C^*$-algebra $A$, let $M(A)$ be its multiplier $C^*$-algebra and for any automorphism
$\alpha$ of $A$,
let $\underline{\alpha}$ be the automorphism of $M(A)$ induced by $\alpha$.

\section{Preliminaries}\label{sec:pre} In this section, we give several notations and basic facts used in
the paper.
\par
Let $\alpha$ be an automorphism of a $C^*$-algebra $A$. Following
\cite {BGR:linking} but
in a slightly different way, we construct an $A-A$-equivalence bimodule $X_{\alpha}$ induced by
$\alpha$ as following: Let $X_{\alpha}=A$ as $\BC$-vector spaces and the left $A$-action on
$X_{\alpha}$ and the left $A$-valued inner product are defined in the usual way, but we define the
right $A$-action on $X_{\alpha}$ by $x\cdot a=x\alpha(a)$ and the right $A$-valued inner product by
$\la x, y\ra_A =\alpha^{-1}(x^* y)$ for any $a\in A$, $x, y\in X_{\alpha}$. We call $X_{\alpha}$ the
$A-A$-equivalence bimodule induced by $\alpha$.

Let $A\subset C$ be a unital inclsion of unital $C^*$-algebras and $E^A$ a conditional expectation
from $C$ onto $A$.

\begin{Def}\label{def:pre1}(\cite [Definition 1.2.2 and Lemma 2.1.6]{Watatani:index}) A finite set
$\{(u_i , u_i^* )\}_{i=1}^n \subset C\times C$ is called a
\sl
quasi-basis
\rm
for $E^A$ if
$$
\sum_{i=1}^n u_i E^A (u_i^* c)=c=\sum_{i=1}^n E^A (cu_i )u_i^*
$$
for any $c\in C$. Also, we say that $E^A$ is of
\sl
Watatani index-finite type
\rm
if there exists a quasi-basis for $E^A$ and in this case we define $\Ind_W (E^A)$,
\sl
Watatani index
\rm
of $E^A$ by
$$
\Ind_W (E^A )=\sum_{i=1}^n u_i u_i^* \in C .
$$
\end{Def}
By \cite [Proposition 1.2.8]{Watatani:index}, $\Ind_W (E^A )$ is an invertible element in $C' \cap C$ and it
does not depend on the choice of quasi-bases.

\begin{prop}\label{prop:pre2}{\rm (\cite [Proposition 1.4.1]{Watatani:index})} With the above notation,
we suppose that $E^A$ is of Watatani index-finite type. If there is another conditional expectation $F^A$
from $C$ onto $A$, there is the unique element $h\in A' \cap C$ with $E^A (h)=1$ such that
$$
F^A (c)=E^A (hc)
$$
for any $c\in C$.
\end{prop}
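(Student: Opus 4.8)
The plan is to construct $h$ explicitly from a quasi-basis for $E^A$ and then read off the stated properties from the defining identities of the quasi-basis, together with the fact that a conditional expectation of Watatani index-finite type is faithful. Fix a quasi-basis $\{(u_i , u_i^* )\}_{i=1}^n$ for $E^A$ and set
$$
h=\sum_{i=1}^n F^A (u_i )\, u_i^* \in C ,
$$
which makes sense since each $F^A (u_i )$ lies in $A$ and each $u_i^*$ lies in $C$.

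First I would verify the representation formula. For $c\in C$, using that $F^A (u_i )\in A$ and that $E^A$ is an $A$-bimodule map, then that $E^A (u_i^* c)\in A$ and that $F^A$ is an $A$-bimodule map, and finally the quasi-basis identity $\sum_i u_i E^A (u_i^* c)=c$, we obtain
$$
E^A (hc)=\sum_{i=1}^n E^A \bigl( F^A (u_i )u_i^* c \bigr)=\sum_{i=1}^n F^A (u_i )\, E^A (u_i^* c)=\sum_{i=1}^n F^A \bigl( u_i E^A (u_i^* c) \bigr)=F^A (c).
$$
Taking $c=1$ gives $E^A (h)=F^A (1)=1$.

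Next I would check that $h\in A' \cap C$. For $a\in A$ and $c\in C$, the bimodule property of $E^A$ gives $E^A (ahc)=aE^A (hc)=aF^A (c)$, while the formula just proved, applied to $ac$ and combined with the bimodule property of $F^A$, gives $E^A (hac)=F^A (ac)=aF^A (c)$; hence $E^A \bigl( (ah-ha)c \bigr)=0$ for every $c\in C$. Putting $c=(ah-ha)^*$ and using that $E^A$ is faithful forces $ah=ha$, so $h\in A' \cap C$. Uniqueness is the same argument: if $h'\in C$ also satisfies $F^A (c)=E^A (h'c)$ for all $c\in C$, then $E^A \bigl( (h-h')c \bigr)=0$ for all $c$, and faithfulness of $E^A$ yields $h=h'$.

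Every step above is a one-line manipulation of the quasi-basis identities and the $A$-bimodule property of conditional expectations, so the only real input is the faithfulness of $E^A$, which is precisely where index-finiteness is used; this is the point I would be careful to justify, either by the short Kadison--Schwarz estimate $\|E^A (u_i^* c)\|^2\le\|u_i u_i^*\|\,\|E^A (c^* c)\|$ (which shows at once that $E^A (c^* c)=0$ implies $c=0$), or by citing \cite{Watatani:index}. One could instead prove $ah=ha$ directly, by expanding $u_i^* a$ through the quasi-basis, which avoids faithfulness there; but faithfulness is unavoidable for the uniqueness assertion, so I would simply establish it once at the start.
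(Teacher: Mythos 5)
The paper does not prove this proposition at all; it is quoted verbatim from Watatani's memoir (Proposition 1.4.1), whose proof is exactly your construction $h=\sum_i F^A(u_i)u_i^*$ followed by the quasi-basis and bimodule manipulations. Your argument is correct and complete, including the faithfulness of $E^A$ needed for $h\in A'\cap C$ and for uniqueness, so there is nothing to add.
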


Under the same situation as above, we regard $C$ as a right Hilbert $A$-module as follows:
We define the right $A$-action on $C$ by
$x\cdot a=xa$ and the right $A$-valued inner product by
$\la x, y \ra_A =E^A (x^* y)$ for any $a\in A$, $x, y\in C$. Let $\BB_A (C)$ be the $C^*$-algebra of
all adjointable right $A$-module operators on $C$ and $\BK_A (C)$ the $C^*$-algebra of all
adjointable right $A$-module ``compact" operators on $C$. We regard $E^A$ as an element in $\BB_A (C)$.
We denote it by $e_A$ and we call $e_A$ the 
\sl
Jones projection
\rm
for $E^A$. Also, we regard $c\in C$ as an element in $\BB_A (C)$ by the left multiplication in $C$.

\begin{Def}\label{def:pre3}(\cite[Definition 2.1.2]{Watatani:index}) Let $C_1$ be the closure of the linear span
of $\{ce_A d\in\BB_A (C) \, | \, c,d\in C \}$. We call the $C^*$-algebra $C_1$ the
\sl
$C^*$-basic construction
\rm
for $E^A$.
\end{Def}
By the definition of $C_1$, $C_1$ is strongly Morita equivalent to $A$ with respect to the equivalence
bimodule $C$. By \cite [Lemmas 2.1.3 and 2.1.6]{Watatani:index}, $C_1 =\BK_A (C)=\BB_A (C)$ and
$C_1$ is the linear span of $\{ce_A d\in\BB_A (C) \, | \, c,d\in C \}$.

\begin{Def}\label{def:pre4}(\cite [Definition 2.3.2]{Watatani:index}) Let $E^C$ be the linear map
from $C_1$ onto $C$ defined by
$$
E^C (ce_A d)=\Ind_W (E^A )^{-1}cd
$$
for any $c, d\in C$. Then $E^C$ is a conditional expectation from $C_1$ onto $C$
and we call $E^C$ the
\sl
dual conditional expectation
\rm
of $E^A$.
\end{Def}

By \cite [Proposition 2.3.4]{Watatani:index}, $E^C$ is of Watatani index-finite type.
Let $\{(u_i , u_i^* )\}_{i=1}^n$ be a quasi-basis for $E^A$ and we set
$w_i =u_i e_A (\Ind_W (E^A ))^{\frac{1}{2}}$ for $i=1,2,\dots , n$. Then
the finite set $\{(w_i , w_i^* )\}_{i=1}^n$ is a quasi-basis
for $E^C$.

\begin{Def}\label{def:pre5} (\cite [Definition 2.1]{KT4:morita}) Inclusions of $C^*$-algebras $A\subset C$
and $B\subset D$ with $\overline{AC}=C$ and $\overline{BD}=D$ are
\sl
strongly Morita equivalent
\rm
if there are a $C-D$-equivalence bimodule $Y$ and its closed subspace $X$ satisfying the following
conditions:
\newline
(1) $a\cdot x\in X$, ${}_C \la x, y \ra\in A$ for any $a\in A$, $x, y\in X$ and $\overline{{}_C \la X, X \ra}=A$,
$\overline{{}_C \la Y, X \ra}=C$,
\newline
(2) $x\cdot b\in X$, $\la x, y \ra_B \in B$ for any $b\in B$, $x, y\in X$ and $\overline{\la X, X \ra_D }=B$,
$\overline{\la Y, X \ra_D }=D$.
\end{Def}
Then we say that $A\subset C$ is
\sl
strongly Morita equivalent to
\rm
$B\subset D$ with respect to the $C-D$-equivalence bimodule $Y$ and its closed subspace
$X$. We note that $X$ is regarded as an $A-B$-equivalence bimodule and that if strong
Morita equivalence inclusions $A\subset C$ and $B\subset D$ are unital, we do not need
to take the closure in Definition \ref {def:pre5}.

Let $A\subset C$ and $B\subset D$ be as above. Let $E^A$ and $E^B$ be conditional expectations
from $C$ and $D$ onto $A$ and $B$, respectively. Let $E^X$ be a linear map from $Y$ onto $X$.

\begin{Def}\label{def:pre6}(\cite [Definition 2.4]{KT4:morita}) We call $E^X$ a
\sl
conditional expectation
\rm
from $Y$ onto $X$ with respect to $E^A$ and $E^B$ if $E^X$ satisfies the following:
\newline
(1) $E^X (c\cdot x)=E^A (c)\cdot x$ for any $c\in C$, $x\in X$,
\newline
(2) $E^X (a\cdot y)=a\cdot E^X (y)$ for any $a\in A$, $y\in Y$,
\newline
(3) $E^A ({}_C \la y, x \ra)={}_C \la E^X (y) , x \ra $ for any $x\in X$, $y\in Y$,
\newline
(4) $E^X (x\cdot d)=x\cdot E^B (d)$ for any $d\in D$, $x\in X$,
\newline
(5) $E^X (y\cdot b)=E^X (y)\cdot b$ for any $b\in B$, $y\in Y$,
\newline
(6) $E^B (\la y, x \ra_D )=\la E^X (y), x \ra_D$ for any $x\in X$, $y\in Y$.
\end{Def}

With the above notation, we have the following by \cite [Theorem 2.9]{KT4:morita}.

\begin{thm}\label{thm:pre7} We suppose that inclusions $A\subset C$ and $B\subset D$ are strongly Morita
equivalent with respect to a $C-D$-equivalence bimodule $Y$ and its closed subspace $X$.
If there is a conditional expectation $E^A$ of Watatani index-finite type from $C$ onto $A$, then
there are a conditional expectation of Watatani index-finite type from $D$ onto $B$ and a
conditional expectation $E^X$ from $Y$ onto $X$ with respect to $E^A$ and $E^B$. Also,
if there is a conditional expectation $E^B$ of Watatani index-finite type from $D$ onto $B$, then
we have the same result as above.

\end{thm}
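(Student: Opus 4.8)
The plan is to transport $E^A$ across the Morita equivalence by working inside the linking $C^*$-algebras. Let $C_L$ be the linking $C^*$-algebra of the $C$–$D$-equivalence bimodule $Y$, realized as operators on the right Hilbert $D$-module $Y\oplus D$, and let $A_L$ be the $*$-subalgebra assembled the same way from $A$, $X$, $\widetilde X$ and $B$; conditions (1) and (2) of Definition \ref{def:pre5} say precisely that $A_L$ is closed under the algebraic operations of $C_L$, so $A_L$ is a $C^*$-subalgebra with $\overline{A_L C_L}=C_L$. Identifying $C$, $D$, $Y$ with the corners $p_1 C_L p_1$, $p_2 C_L p_2$, $p_1 C_L p_2$ (the $p_i\in M(C_L)$ the two diagonal projections, $p_1+p_2=1$) and $A$, $B$, $X$ with the corresponding corners of $A_L$, a pair $(E^B,E^X)$ as in the statement is exactly the same thing as a single conditional expectation $\mathcal E\colon C_L\to A_L$ with
\[
\mathcal E\!\left(\begin{pmatrix} c & y\\ \widetilde{y'} & d\end{pmatrix}\right)=\begin{pmatrix} E^A(c) & E^X(y)\\ \widetilde{E^X(y')} & E^B(d)\end{pmatrix},
\]
so it suffices to produce such an $\mathcal E$ of Watatani index-finite type restricting to $E^A$ on the upper-left corner: the passage $\mathcal E\mapsto(E^B,E^X)$ is compression to the corners, and the passage back is the displayed formula.

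Before constructing $\mathcal E$ I would record the factorizations $Y=\overline{C\cdot X}=\overline{X\cdot D}$ (and dually $\widetilde Y=\overline{\widetilde X\cdot C}=\overline{D\cdot\widetilde X}$): $\overline{C\cdot X}$ is a closed sub-$C$–$D$-bimodule of $Y$ — closed under the right $D$-action because $X\cdot D\subseteq\overline{C\cdot X}$, which one sees by expanding $d\in D$ through $\overline{\langle Y,X\rangle_D}=D$ — and its left inner products span $\overline{C\,{}_C\langle X,X\rangle\,C}=\overline{CAC}=C$, so, being a full closed sub-bimodule of an equivalence bimodule, it is all of $Y$. These factorizations let one \emph{define} $\mathcal E$ on a dense $*$-subalgebra of $C_L$ by the formulas $E^X(c\cdot x):=E^A(c)\cdot x$ and $E^B(\langle y,x\rangle_D):=\langle E^X(y),x\rangle_D$. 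The first real point is that these are well defined and bounded — equivalently, that $\sum_k E^A(c_k)\cdot x_k=0$ whenever $\sum_k c_k\cdot x_k=0$ in $Y$, and the analogous statement for $E^B$. This is a computation with the Hilbert-module identities in $X$ and $Y$ (using ${}_C\langle x,x'\rangle={}_A\langle x,x'\rangle\in A$ and $\langle x,x'\rangle_D=\langle x,x'\rangle_B\in B$ for $x,x'\in X$, together with the Schwarz inequality for $E^A$). Once it is carried out, idempotence onto $B$, positivity and contractivity of $E^B$, and the six identities of Definition \ref{def:pre6}, are all routine — the latter because $\mathcal E$ is then visibly $A_L$-bilinear and extends $E^A$.

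The remaining obstacle, which I expect to be the harder one, is to show $\mathcal E$ is of Watatani index-finite type by exhibiting a quasi-basis. Starting from a quasi-basis $\{(u_i,u_i^*)\}_{i=1}^n$ for $E^A$ and from $\overline{{}_C\langle Y,X\rangle}=C$, one first extracts a \emph{finite} family $\zeta_j\in Y$, $\xi_j\in X$ ($1\le j\le m$) recovering the $u_i$ via $\sum_j{}_C\langle\zeta_j,\xi_j\rangle\cdot u_i=u_i$ — finiteness being available because $C$ is finitely generated over $A$ — and then checks that the matrices built from the $\begin{pmatrix}u_i&0\\0&0\end{pmatrix}$ together with suitable combinations of the $u_i$, $\zeta_j$, $\xi_j$ and $(\Ind_W(E^A))^{1/2}$ (in the spirit of the quasi-basis $w_i=u_i e_A(\Ind_W(E^A))^{1/2}$ for the dual expectation) constitute a quasi-basis for $\mathcal E$; the non-unital case requires some care in passing through $M(C_L)$ so that the corner compressions make sense. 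Granting this, compressing $\mathcal E$ to $p_2 C_L p_2=D$ gives the desired conditional expectation $E^B\colon D\to B$, which is again of index-finite type (a quasi-basis for it is obtained by compressing the one for $\mathcal E$), and compressing to $p_1 C_L p_2=Y$ gives $E^X\colon Y\to X$ with the properties of Definition \ref{def:pre6}. Finally, running the same argument with the roles of the two corners of $C_L$ interchanged — i.e.\ starting from $E^B$ — yields the last assertion of the theorem.
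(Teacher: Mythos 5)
First, a point of reference: the paper gives no proof of this statement — it is quoted verbatim from \cite[Theorem 2.9]{KT4:morita} — so your proposal has to be measured against the argument there, which likewise transports $E^A$ across the equivalence via the identities $E^X(c\cdot x)=E^A(c)\cdot x$ and $E^B(\la x,y\ra_D)=\la x,E^X(y)\ra_D$ (identities that Conditions (1) and (6) of Definition \ref{def:pre6} force in any case, given $Y=\overline{C\cdot X}$ and $D=\overline{\la Y,X\ra_D}$). Your architecture — the linking algebras $A_L\subset C_L$, the dictionary between a compatible pair $(E^B,E^X)$ and a corner-preserving conditional expectation $\mathcal{E}\colon C_L\to A_L$, and the factorization $Y=\overline{C\cdot X}$ — is the right one, and the well-definedness and contractivity of $c\cdot x\mapsto E^A(c)\cdot x$ can indeed be closed with exactly the tools you name: factor the Gram matrix $\bigl({}_A\la x_k,x_l\ra\bigr)=b^*b$ in $M_n(A)$, pull the entries of $b$ through $E^A$ by the $A$-bimodule property, and apply the Schwarz inequality to get $\|\sum_k E^A(c_k)\cdot x_k\|\le\|\sum_k c_k\cdot x_k\|$.

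The genuine gap is the index-finiteness of $E^B$, which you correctly flag as the hard point but then do not prove: ``suitable combinations of the $u_i$, $\zeta_j$, $\xi_j$ and $\Ind_W(E^A)^{1/2}$'' is a declaration of intent, not a quasi-basis, and the two specific ingredients you single out are not the right ones. The factor $\Ind_W(E^A)^{1/2}$, borrowed from the quasi-basis $w_i=u_ie_A\Ind_W(E^A)^{1/2}$ of the \emph{dual} expectation, plays no role here, and the normalization $\sum_j{}_C\la\zeta_j,\xi_j\ra\cdot u_i=u_i$ is not what the verification consumes. What does work (in the unital case, the only one in which quasi-bases are defined) is: choose finite frames $\xi_1,\dots,\xi_p\in X$ with $\sum_k{}_A\la\xi_k,\xi_k\ra=1_A$ and $\eta_1,\dots,\eta_q\in Y$ with $\sum_j\la\eta_j,\eta_j\ra_D=1_D$ — whose existence is itself a small argument (a dense one-sided ideal of a unital $C^*$-algebra is improper, plus a positivity correction to symmetrize) — and verify that $\{(\la\eta_j,u_i\cdot\xi_k\ra_D\,,\,\la u_i\cdot\xi_k,\eta_j\ra_D)\}_{i,j,k}$ is a quasi-basis for $E^B$, using $\la u_i\cdot\xi_k,y\ra_D=\la\xi_k,u_i^*\cdot y\ra_D$ to land in the scope of Condition (6), the reproducing identity $\sum_i u_i\cdot E^X(u_i^*\cdot y)=y$, and the two frame identities (and then the second quasi-basis equation symmetrically). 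Until some such family is exhibited and checked, the assertion that $E^B$ is of Watatani index-finite type — which is the actual content of the theorem — has not been established.
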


Next, we mention on the upward basic construction of equivalence bimodules.
Let $A\subset C$, $B\subset D$ and $X\subset Y$ be as above. We suppose that there are
conditional expectation $E^A$ and $E^B$ from $C$ and $D$ onto $A$ and $B$, respectively and
that they are of Watatani index-finite type. We also suppose that there is a conditional expectation $E^X$
from $Y$ onto $X$ with respect to $E^A$ and $E^B$. Let $C_1$ and $D_1$ be the $C^*$-basic
constructions for $E^A$ and $E^B$, respectively. And let $E^C$ and $E^D$ be
the dual conditional expectations
of $E^A$ and $E^B$, respectively. We regard $C$ and $D$ as a $C_1 - A$-equivalence
bimodule and a $D_1 -B$-equivalence bimodule, respectively. Let
$$
Y_1 =C\otimes_A X\otimes_B\widetilde{D} .
$$
Also, let $E^Y$ be the linear map from $Y_1$ onto $Y$ defined by
$$
E^Y (c\otimes x\otimes\widetilde{d})=\Ind_W (E^A )^{-1}c\cdot x\cdot d^*
$$
for any $c\in C$, $d\in D$, $x\in X$. Furthermore, let $\phi$ be the linear map from $Y$ to $Y_1$
defined by
$$
\phi(y)=\sum_{i, j}u_i \otimes E^X (u_i^* \cdot y\cdot v_j )\otimes\widetilde{v_j}
$$
for any $y\in Y$, where $\{(u_i , u_i^* )\}$ and $\{(v_j , v_j^* )\}$ are quasi- bases for $E^A$ and
$E^B$, respectively. By \cite [Lemma 6.1 and Corollary 6.3]{KT4:morita}, we obtain the following:

\begin{thm}\label{thm:pre8} $($\cite [Lemma 6.1 and Corollary 6.3]{KT4:morita}$)$
We can regard $Y$ as a closed subspace of
a $C_1- D_1$-equivalence bimodule $Y_1$ using the map $\phi$ and
the inclusions $C\subset C_1$ and $D\subset D_1$ are strongly Morita equivalent with $Y_1$
and its closed subspace $Y$.
\end{thm}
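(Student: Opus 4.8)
The plan is to realise $Y_1$ as an interior tensor product of equivalence bimodules, to show that $\phi$ embeds $Y$ isometrically onto a closed $C-D$-sub-bimodule of $Y_1$, and then to verify the conditions of Definition \ref{def:pre5} for the inclusions $C\subset C_1$ and $D\subset D_1$ with $Y_1$ and $\phi(Y)$ in the roles of $Y$ and $X$. The first, formal, part is quick. By the $C^*$-basic construction (Definition \ref{def:pre3} and the remarks after it, where $C_1=\BB_A(C)=\BK_A(C)$), $C$ is a $C_1-A$-equivalence bimodule; likewise $D$ is a $D_1-B$-equivalence bimodule, so its dual $\widetilde D$ is a $B-D_1$-equivalence bimodule; and $X$ is an $A-B$-equivalence bimodule. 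Hence $Y_1=C\otimes_A X\otimes_B\widetilde D$ is a $C_1-D_1$-equivalence bimodule, its left $C_1$-valued and right $D_1$-valued inner products being the iterated ones, and restricting the actions along the unital inclusions $C\subset C_1$, $D\subset D_1$ also makes $Y_1$ a $C-D$-bimodule.

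Next I would analyse $\phi$. Using $\overline{\la Y,X\ra_D}=D$ together with the imprimitivity identity $y\cdot\la y',x\ra_D={}_C\la y,y'\ra\cdot x$, one obtains $Y=\overline{C\cdot X}$, which reduces all computations with $\phi$ to elements $c\cdot x$ with $c\in C$, $x\in X$, on which the relations (1)--(6) of Definition \ref{def:pre6} apply directly. With those relations and the two quasi-basis identities for $E^A$ and for $E^B$, I would check that $\phi$ is independent of the chosen quasi-bases and that $\phi(c\cdot y)=c\cdot\phi(y)$ and $\phi(y\cdot d)=\phi(y)\cdot d$ for $c\in C$, $d\in D$; the key manipulation is to write $c\cdot u_i=\sum_k u_k E^A(u_k^* c u_i)$, move the $A$-valued coefficient across $\otimes_A$, absorb it via relation (2) of $E^X$, and then collapse the $i$-sum with the identity $\sum_i E^A(c'u_i)u_i^*=c'$, the right-hand statement being symmetric via relation (5) and the $E^B$-quasi-basis. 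I would also check $E^Y\circ\phi=\id_Y$, so that $\phi$ is injective.

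The core is to show that $\phi$ intertwines the inner products, i.e. ${}_{C_1}\la\phi(y),\phi(z)\ra={}_C\la y,z\ra\in C$ and $\la\phi(y),\phi(z)\ra_{D_1}=\la y,z\ra_D\in D$; in particular $\phi$ is isometric, hence a topological isomorphism of $Y$ onto the closed subspace $\phi(Y)$ of $Y_1$, and $\overline{{}_{C_1}\la\phi(Y),\phi(Y)\ra}=\overline{{}_C\la Y,Y\ra}=C$ and $\overline{\la\phi(Y),\phi(Y)\ra_{D_1}}=\overline{\la Y,Y\ra_D}=D$. To compute ${}_{C_1}\la\phi(y),\phi(z)\ra$ one expands the iterated inner product on $Y_1$, collapses the $\widetilde D$-slot into an $E^B$-term (the left $B$-valued inner products ${}_B\la\widetilde{v_j},\widetilde{v_l}\ra$ becoming $E^B(v_j^* v_l)$), collapses the $X$-slot using the defining relations of $E^X$, and finally collapses the $C$-slot using the Jones-projection description ${}_{C_1}\la c,c'\ra=c\,e_A\,c'^*$ of the left inner product on $C$ together with the quasi-basis relations for $E^A$, which absorb the projections $e_A$ and return an element of $C$; the right-hand computation is symmetric. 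It then remains to prove the two fullness conditions $\overline{{}_{C_1}\la Y_1,\phi(Y)\ra}=C_1$ and $\overline{\la Y_1,\phi(Y)\ra_{D_1}}=D_1$: for the first I would compute ${}_{C_1}\la c\otimes x\otimes\widetilde d,\phi(y)\ra$ on elementary tensors, perform the same collapses, and then use the quasi-basis $\{(w_i,w_i^*)\}$ with $w_i=u_i e_A\Ind_W(E^A)^{\frac{1}{2}}$ for $E^C$ together with the fact (Definition \ref{def:pre3} and the remark after it) that $C_1$ is the linear span of $\{c\,e_A\,d\mid c,d\in C\}$, to conclude that such inner products span a dense subspace of $C_1$; the $D_1$-side is identical with $e_B$ in place of $e_A$. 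Combining everything, Definition \ref{def:pre5} holds for $C\subset C_1$ and $D\subset D_1$ with $Y_1\supset\phi(Y)$, which is the assertion.

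I expect the main obstacle to be these inner-product calculations: carrying the $C_1$- and $D_1$-valued inner products through all three tensor factors while keeping the Jones projections $e_A$ and $e_B$ under control and invoking the quasi-basis identities at exactly the right places. Once the bookkeeping there is organised, the bimodule-map properties of $\phi$ and the remaining verifications required by Definition \ref{def:pre5} are routine.
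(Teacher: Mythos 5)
Your outline is correct and follows the same route as the source the paper cites for this statement (the paper itself gives no proof beyond the reference to \cite[Lemma 6.1 and Corollary 6.3]{KT4:morita}): realize $Y_1=C\otimes_A X\otimes_B\widetilde D$ as an interior tensor product of the equivalence bimodules coming from the two basic constructions, verify via the quasi-basis identities and Conditions (1)--(6) of Definition \ref{def:pre6} that $\phi$ is a $C-D$-bimodule map with ${}_{C_1}\la\phi(y),\phi(z)\ra={}_C\la y,z\ra$ and $\la\phi(y),\phi(z)\ra_{D_1}=\la y,z\ra_D$, and then check the two fullness conditions of Definition \ref{def:pre5} using the Jones-projection description of $C_1$ and $D_1$. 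The steps you defer are exactly the routine quasi-basis computations carried out in the cited reference, and your reductions (e.g.\ $Y=\overline{C\cdot X}$ from the imprimitivity identity) are the right ones.
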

By \cite [Lemma 6.4 and Remark 6.6]{KT4:morita}, we can see that
$E^Y$ is a conditional
expectation from $Y_1$ onto $Y$ with respect to $E^C$ and $E^D$ and it is independent of the
choice of quasi-bases for $E^A$ and $E^B$.

\begin{Def}\label{def:pre9}(\cite [Definition 6.5]{KT4:morita}) We call $Y_1$ the
\sl
upward basic construction
\rm
of $Y$ for $E^X$, and $E^Y$ is called
\sl
the dual conditional expectation
\rm
of $E^X$.
\end{Def}

\section{Definitions and basic properties}\label{sec:definition}Let $A\subset C$, $B\subset D$ and
$K\subset L$ be inclusions of $C^*$-algebras with $\overline{AC}=C$, $\overline{BD}=D$ and
$\overline{KL}=L$, respectively. Let $Y$ and $W$ be a $C-D$-equivalence bimodule and a $D-L$-equivalence
bimodule and $X$ and $Z$ their closed subspaces satisfying Conditions (1), (2) in
Definition \ref{def:pre5}, respectively. That is, the inclusions $A\subset C$ and $B\subset D$
are strongly Morita equivalent with respect to the $C-D$-equivalence bimodule $Y$ and
its closed subspace $X$ and the inclusion $B\subset D$ and $K\subset L$ are strongly Morita
equivalent with respect to the $D-L$-equivalence bimodule and its closed subspace $Z$.
Let $X\otimes_D Z$ be the closure of the linear span of the set
$$
\{x\otimes z \in Y\otimes_D W \, | \, x\in X, \, z\in Z \} .
$$
Clearly $X\otimes_D Z$ is a closed linear subspace of $Y\otimes_D W$ and we can regard $X\otimes_D Z$ as
an $A-K$-equivalence bimodule.

\begin{lemma}\label{lem:bimodule}With the above notation, $X\otimes_D Z$ is isomorphic to
$X\otimes_B Z$ as $A-K$-equivalence bimodules, where $X\otimes_D Z$ is regarded as an $A-K$-equivalence bimodule.
\end{lemma}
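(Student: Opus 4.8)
The plan is to exhibit the obvious map $\Phi\colon X\otimes_B Z\to X\otimes_D Z$ determined by $\Phi(x\otimes z)=x\otimes z$ for $x\in X$, $z\in Z$, and to show that it is a well-defined surjective isometry intertwining the left $A$-actions.

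First I would record the identifications that make the statement almost tautological. Since $X$ is a closed subspace of the $C-D$-equivalence bimodule $Y$ satisfying Conditions (1), (2) of Definition \ref{def:pre5}, the left $A$-action and right $B$-action on $X$, and the $A$-valued and $B$-valued inner products on $X$, are exactly the restrictions of the corresponding operations on $Y$; in particular $\la x_1,x_2\ra_B=\la x_1,x_2\ra_D$ for $x_1,x_2\in X$, the right-hand side lying in $B$ by Condition (2). The analogous statement holds for $Z\subset W$: the $K$-valued inner product on $Z$ is the restriction of the $L$-valued inner product on $W$, and for $b\in B$, $z\in Z$ the element $b\cdot z$ computed in $Z$ agrees with $b\cdot z$ computed in $W$.

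Next I would compute inner products on elementary tensors. For finite sums $\xi=\sum_i x_i\otimes z_i$ and $\eta=\sum_j x_j'\otimes z_j'$ of elementary tensors, the $K$-valued inner product in the internal tensor product $X\otimes_B Z$ is
$$
\la\xi,\eta\ra_K=\sum_{i,j}\la z_i,\,\la x_i,x_j'\ra_B\cdot z_j'\,\ra_K ,
$$
while the $L$-valued inner product of the images $\Phi(\xi),\Phi(\eta)$, regarded as elements of the $C-L$-equivalence bimodule $Y\otimes_D W$, is
$$
\la\Phi(\xi),\Phi(\eta)\ra_L=\sum_{i,j}\la z_i,\,\la x_i,x_j'\ra_D\cdot z_j'\,\ra_L .
$$
By the identifications of the previous paragraph these two expressions coincide; in particular the second lies in $K$ and is precisely the $K$-valued inner product of $\Phi(\xi),\Phi(\eta)$ in $X\otimes_D Z$. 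Now the bilinear map $X\times Z\to Y\otimes_D W$, $(x,z)\mapsto x\otimes z$, is $B$-balanced because $B\subset D$, so it factors through the algebraic balanced tensor product; the inner-product identity just obtained shows that the resulting linear map is isometric for the pre-inner products, hence $\Phi$ is well defined and extends to an isometry of Hilbert $K$-modules $X\otimes_B Z\to X\otimes_D Z$. Its range contains every $x\otimes z$ with $x\in X$, $z\in Z$, so it is dense in the closed subspace $X\otimes_D Z$, hence $\Phi$ is surjective. Since $\Phi(a\cdot(x\otimes z))=(a\cdot x)\otimes z=a\cdot\Phi(x\otimes z)$ for $a\in A$, $\Phi$ intertwines the left $A$-actions, and being a surjective isometry of equivalence bimodules it automatically intertwines the left $A$-valued inner products as well. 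Therefore $\Phi$ is an isomorphism of $A-K$-equivalence bimodules.

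The only genuine point needing care — and where I would concentrate the written proof — is the bookkeeping in the middle step: checking that each inner product and module action appearing on $X$, $Z$, $X\otimes_B Z$ really matches its counterpart inside $Y$, $W$, $Y\otimes_D W$. This is exactly what Conditions (1), (2) of Definition \ref{def:pre5} are built to supply, so I do not anticipate a serious obstacle; everything beyond that is formal.
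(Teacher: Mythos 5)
Your proposal is correct and follows essentially the same route as the paper: both define the canonical map on elementary tensors $x\otimes_B z\mapsto x\otimes_D z$, check that it preserves the left $A$-valued and right $K$-valued inner products (which also settles well-definedness on the quotient defining the interior tensor product), and extend by continuity to a surjective $A-K$-equivalence bimodule isomorphism. The paper compresses the inner-product verification into ``easy computations,'' whereas you spell it out; the content is the same.
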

\begin{proof}Let $(X\otimes_D Z)_0$ be the linear span of the set
$$
\{x\otimes z \in Y\otimes_D W \, | \, x\in X, \, z\in Z \} 
$$
and let $(X\otimes_B Z)_0$ be the algebraic relative tensor product of the $A-B$-equivalence bimodule $X$
and the $B-K$-equivalence bimodule $Z$. We note that $X\otimes_B Z$ is the completion of $(X\otimes_B Z)_0$.
Let $\pi_0$ be the map from $(X\otimes_B Z)_0$ to $(X\otimes_D Z)_0$ defined by
$\pi_0 (x\otimes_B z)=x\otimes_D z$ for any $x\in X$, $z\in Z$. It is well-defined and surjective.
By easy computations, $\pi_0$ preserves the left $A$-valued inner product and the right $K$-valued
inner product. Hence we obtain an $A-K$-equivalence bimodule isomorphism $\pi$ of $X\otimes_B Z$
onto $X\otimes_D Z$.
Therefore, we obtain the conclusion.
\end{proof}
By Lemma \ref{lem:bimodule}, we identify $X\otimes_B Z$ with $X\otimes_D Z$, the closed subspace of
$Y\otimes_D W$ under the above situations.

Let $A\subset C$ be an inclusion of $C^*$-algebras with $\overline{AC}=C$.
Let $Y$ be a $C-C$-equivalence bimodule and $X$ its closed subspace satisfying
Conditions (1), (2) in Definition \ref {def:pre5}. Let $\Equi(A, C)$ be the set of all
such pairs $(X, Y)$ as above. We define an equivalence relation $`` \sim "$ as follows:
For $(X, Y)$, $(Z, W)\in\Equi(A, C)$, $(X, Y)\sim (Z, W)$ in $\Equi (A, C)$ if and only if
there is a $C-C$-equivalence bimodule isomorphism $\Phi$ of $Y$ onto $W$ such that
the restriction of $\Phi$ to $X$, $\Phi|_X$ is an $A-A$-equivalence bimodule isomorphism of $X$
onto $Z$. We denote by $[X, Y]$, the equivalence class of $(X, Y)$ in $\Equi(A, C)$.
We remark here that we have the following lemma:

\begin{lemma}\label{lem:simple}We suppose that inclusions of $C^*$-algebras $A\subset C$
and $B\subset D$ are strongly Morita equivalent with respect to $C-D$-equivalence bimodules
$Y$ and $W$ and their closed subspaces $X$ and $Z$, respectively.
If there is a $C-D$-equivalence bimodule isomorphism $\Phi$ of $Y$ onto $W$
such that $\Phi|_X$ is a bijection from $X$ onto $Z$, then $\Phi|_X$ is an $A-B$-
equivalence bimodule isomorphism of $X$ onto $Z$.
\end{lemma}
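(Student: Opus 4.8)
The plan is to unwind the definitions and observe that everything reduces to the fact that $A\subset C$ and $B\subset D$ are honest inclusions together with the defining properties of a $C-D$-equivalence bimodule isomorphism. Recall first how $X$ and $Z$ carry their $A-B$-equivalence bimodule structures: by Conditions (1) and (2) of Definition \ref{def:pre5}, the closed subspace $X$ of $Y$ is stable under the left action of $A\subset C$ and the right action of $B\subset D$, and ${}_C\langle x,y\rangle\in A$, $\langle x,y\rangle_B\in B$ for $x,y\in X$; the $A$- and $B$-valued inner products on $X$ are precisely the restrictions of ${}_C\langle\cdot,\cdot\rangle$ and $\langle\cdot,\cdot\rangle_D$. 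The same remarks apply verbatim to $Z\subset W$. Keeping this bookkeeping straight is really the only thing that needs care.

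First I would check that $\Phi|_X$ is an $A-B$-bimodule map. Since $\Phi$ is a $C-D$-equivalence bimodule isomorphism, $\Phi(c\cdot y\cdot d)=c\cdot\Phi(y)\cdot d$ for all $c\in C$, $d\in D$, $y\in Y$. Restricting to $a\in A\subset C$, $b\in B\subset D$ and $x\in X$ gives $\Phi(a\cdot x\cdot b)=a\cdot\Phi(x)\cdot b$, where $a\cdot x\cdot b\in X$ by stability of $X$ and $a\cdot\Phi(x)\cdot b\in Z$ by stability of $Z$. Together with the hypothesis that $\Phi|_X$ is a $\BC$-linear bijection of $X$ onto $Z$, this shows $\Phi|_X$ is a $\BC$-linear $A-B$-bimodule isomorphism.

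Next I would verify that $\Phi|_X$ preserves the inner products. For $x,y\in X$ we have $\Phi(x),\Phi(y)\in Z$, so the $A$-valued inner product of $\Phi(x)$ and $\Phi(y)$ in $Z$ is ${}_C\langle\Phi(x),\Phi(y)\rangle$; since $\Phi$ preserves the left $C$-valued inner product, this equals ${}_C\langle x,y\rangle$, which is exactly the $A$-valued inner product of $x$ and $y$ in $X$. The identical argument with $\langle\cdot,\cdot\rangle_D$ in place of ${}_C\langle\cdot,\cdot\rangle$ shows that $\Phi|_X$ preserves the right $B$-valued inner product. Combining the last two paragraphs, $\Phi|_X$ is an $A-B$-equivalence bimodule isomorphism of $X$ onto $Z$.

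There is no genuine obstacle in the argument: fullness of $X$ and $Z$ is already built into Conditions (1), (2) of Definition \ref{def:pre5}, so nothing further need be established, and the whole point of the lemma is the observation that, once $X$ and $Z$ are equipped with the inner products induced from $Y$ and $W$, the statement ``$\Phi$ preserves ${}_C\langle\cdot,\cdot\rangle$ and $\langle\cdot,\cdot\rangle_D$'' becomes, after restriction to $X$, the statement ``$\Phi|_X$ preserves the $A$- and $B$-valued inner products.''
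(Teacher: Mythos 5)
Your proposal is correct and follows essentially the same route as the paper's own proof: restrict the $C$-$D$-bimodule property of $\Phi$ to $a\in A$, $b\in B$, and use that the $A$- and $B$-valued inner products on $X$ and $Z$ are the restrictions of the $C$- and $D$-valued ones, so preservation of the latter gives preservation of the former.
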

\begin{proof}Let $a\in A$, $b\in B$ and $x, y\in X$. By Definition \ref {def:pre5},
$a\cdot x\in X$ and since $\Phi|_X$ is a bijection from $X$ onto $Z$,
$\Phi(a\cdot x)\in Z$. Furthermore, since $\Phi$ is a $C-D$-equivalence bimodule isomorphism
of $Y$ onto $W$, $\Phi(a\cdot x)=a\cdot \Phi(x)$. Similarly we obtain that $\Phi(x\cdot b)=\Phi(x)\cdot b$.
Also by Definition \ref {def:pre5},
\begin{align*}
{}_A \la \Phi(x), \, \Phi(y) \ra & ={}_C \la \Phi(x) \, , \, \Phi(y) \ra ={}_C \la x, y \ra
={}_A \la x, y \ra  \\
\la \Phi(x) , \, \Phi(y) \ra_B  & =\la \Phi(x) \, , \, \Phi(y) \ra_D
= \la x , \, y \ra_D =\la x \, , \, y \ra_B .
\end{align*}
Hence $\Phi|_X$ is an $A-B$-equivalence bimodule isomorphism of
$X$ onto $Z$.
\end{proof}

By Lemma \ref{lem:simple}, we can see that for $(X, Y), (Z, W)\in\Equi(A, C)$, $(X, Y)\sim(Z, W)$
in $\Equi(A, C)$ if and only if there is a $C-C$-equivalence bimodule isomorphism $\Phi$ of $Y$ onto $W$
such that $\Phi(X)=Z$.
\par
Let $\Pic(A, C)=\Equi (A, C)/\!\sim$. We define the product in $\Pic(A, C)$ as follows: For $(X, Y)$,
$(Z, W)\in\Equi(A, C)$
$$
[X, Y][Z, W]=[X\otimes_A Z \, , \, Y\otimes_C W],
$$
where the $A-A$-equivalence bimodule $X\otimes_A Z$ is identified with the closed
subspace $X\otimes_C Z$ of $Y\otimes_C W$ defined in the above. We note that
$Y\otimes_C W$ and its closed subspace $X\otimes_A Z$ satisfy Conditions (1), (2) in
Definition \ref {def:pre5} by \cite [Proposition 2.3]{KT4:morita}. By Lemma \ref{lem:bimodule}
and easy computations, we can see that $\Pic(A, C)$ is a group. We regard $(A, C)$ as an
element in $\Equi(A, C)$ in the evident way. Then $[A, C]$ is the unit element in $\Pic(A, C)$.
For any element $(X, Y)\in\Equi(A, C)$, $(\widetilde{X}, \widetilde{Y})\in\Equi(A, C)$ and
$[\widetilde{X}, \widetilde{Y}]$ is the inverse element of $[X, Y]$ in $\Pic(A, C)$, where $\widetilde{X}$
and $\widetilde{Y}$ are the dual $A-A$-equivalence bimodule of $X$ and the dual $C-C$-equivalence bimodule
of $Y$, respectively. We note that $\widetilde{X}$ can be a closed subspace of $\widetilde{Y}$.
We call the group $\Pic(A, C)$ defined in the above, the
\sl
Picard group
\rm
of the inclusion of $C^*$-algebras $A\subset C$ with $\overline{AC}=C$.
\par
Let $A\subset C$ and $B\subset D$ be inclusions of $C^*$-algebras with $\overline{AC}=C$
and $\overline{BD}=D$, respectively. We suppose that $A\subset C$ and $B\subset D$ are
strongly Morita equivalent with respect to a $C-D$-equivalence $W$ and its closed subspace $Z$.
Let $g$ be the map from $\Pic(A, C)$ to $\Pic(B, D)$ defined by
$$
g([X, Y])=[\widetilde{Z}\otimes_A X\otimes_A  Z \, , \, \widetilde{W}\otimes_C Y\otimes_C W]
$$
for any $(X, Y)\in\Equi(A, C)$, where $\widetilde{Z}\otimes_A X\otimes_A Z$ is regarded as a 
closed subspace of $\widetilde{W}\otimes_C Y\otimes_C W$ in the same way as in Lemma \ref{lem:bimodule}.

\begin{lemma}\label{lem:iso1}With the same notation as above, $g$ is an isomorphism of $\Pic(A, C)$
onto $\Pic(B, D)$,
\end{lemma}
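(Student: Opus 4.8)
The plan is to construct an explicit two-sided inverse of $g$ and to check that both $g$ and the candidate inverse are well-defined group homomorphisms; mutual invertibility then gives that $g$ is an isomorphism. Since $Z$ is a closed subspace of the $C-D$-equivalence bimodule $W$ satisfying Conditions (1), (2) of Definition \ref{def:pre5}, the dual $\widetilde{Z}$ is a closed subspace of the $D-C$-equivalence bimodule $\widetilde{W}$ satisfying the analogous conditions, so $B\subset D$ and $A\subset C$ are strongly Morita equivalent with respect to $\widetilde{W}$ and $\widetilde{Z}$. I would then define $h\colon\Pic(B,D)\to\Pic(A,C)$ by $h([V,U])=[Z\otimes_B V\otimes_B\widetilde{Z},\,W\otimes_D U\otimes_D\widetilde{W}]$, where $Z\otimes_B V\otimes_B\widetilde{Z}$ is regarded as a closed subspace of $W\otimes_D U\otimes_D\widetilde{W}$ exactly as in Lemma \ref{lem:bimodule} --- this pair lies in $\Equi(A,C)$ by \cite[Proposition 2.3]{KT4:morita} --- and aim to show that $h$ is a two-sided inverse of $g$.

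First I would verify that $g$ is well-defined: if $(X,Y)\sim(X',Y')$ via a $C-C$-equivalence bimodule isomorphism $\Phi$ with $\Phi(X)=X'$, then $\id_{\widetilde{W}}\otimes\Phi\otimes\id_W$ is a $D-D$-equivalence bimodule isomorphism $\widetilde{W}\otimes_C Y\otimes_C W\to\widetilde{W}\otimes_C Y'\otimes_C W$ taking the closed subspace $\widetilde{Z}\otimes_A X\otimes_A Z$ onto $\widetilde{Z}\otimes_A X'\otimes_A Z$; Lemma \ref{lem:simple} promotes the restriction to a $B-B$-equivalence bimodule isomorphism, so $g([X,Y])=g([X',Y'])$. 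The identical argument handles $h$.

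Next I would check that $g$ is multiplicative. Using associativity of relative tensor products and the identification of Lemma \ref{lem:bimodule}, the product $g([X,Y])g([X',Y'])$ is represented by $\widetilde{W}\otimes_C Y\otimes_C(W\otimes_D\widetilde{W})\otimes_C Y'\otimes_C W$ with closed subspace $\widetilde{Z}\otimes_A X\otimes_A(Z\otimes_B\widetilde{Z})\otimes_A X'\otimes_A Z$; inserting the canonical isomorphisms $W\otimes_D\widetilde{W}\cong C$ and $Z\otimes_B\widetilde{Z}\cong A$ collapses these to $\widetilde{W}\otimes_C(Y\otimes_C Y')\otimes_C W$ and $\widetilde{Z}\otimes_A(X\otimes_A X')\otimes_A Z$, that is, to $g([X,Y][X',Y'])$. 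Taking $(X,Y)=(A,C)$ and using $\widetilde{W}\otimes_C W\cong D$, $\widetilde{Z}\otimes_A Z\cong B$ gives $g([A,C])=[B,D]$, so $g$ is a homomorphism, and the same computation makes $h$ one. Composing $h$ with $g$ and cancelling the factors $W\otimes_D\widetilde{W}\cong C$ (with restriction $Z\otimes_B\widetilde{Z}\cong A$) at the two ends yields $h\circ g=\id$, and symmetrically, cancelling $\widetilde{W}\otimes_C W\cong D$ (with restriction $\widetilde{Z}\otimes_A Z\cong B$), $g\circ h=\id$; hence $g$ is an isomorphism of $\Pic(A,C)$ onto $\Pic(B,D)$.

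The step I expect to be the main obstacle is ensuring that the canonical bimodule isomorphisms $W\otimes_D\widetilde{W}\cong C$, $\widetilde{W}\otimes_C W\cong D$ (and the analogues for $Z$) are isomorphisms of \emph{inclusions}: one must confirm that $w\otimes\widetilde{w'}\mapsto{}_C\la w,w'\ra$ carries the closed subspace $Z\otimes_B\widetilde{Z}$ exactly onto $A$ (and similarly on the $D$-side), and that the associativity identifications respect the distinguished subspaces, so that every intermediate object genuinely lies in $\Equi$ and every identification is compatible with $\sim$ in the sense of Lemma \ref{lem:simple}. Once that bookkeeping is settled, associativity of relative tensor products and Lemma \ref{lem:bimodule} reduce the remainder to routine calculation.
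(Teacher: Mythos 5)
Your proposal is correct and follows essentially the same route as the paper: the paper's own proof consists precisely of exhibiting the canonical isomorphisms $W\otimes_D\widetilde{W}\cong C$ (restricting to $Z\otimes_B\widetilde{Z}\cong A$) and $C\otimes_C Y\cong Y$ (restricting to $A\otimes_A X\cong X$) and then invoking the resulting cancellations, which is exactly the mechanism you use to verify multiplicativity and to show that $h$ is a two-sided inverse. You simply carry out the bookkeeping (well-definedness via Lemma \ref{lem:simple}, the explicit inverse $h$) that the paper leaves implicit.
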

\begin{proof}The $C-C$-equivalence bimodule $W\otimes_D \widetilde{W}$ is isomorphic
to the $C-C$-equivalence bimodule $C$ by the isomorphism
$$
W\otimes_D \widetilde{W}\longrightarrow C:z\otimes\widetilde{w}\mapsto{}_C \la z, w \ra
$$
for any $z, w\in W$. The restriction of the above isomorphism to $Z\otimes_B \widetilde{Z}$ is
an isomorphism of $Z\otimes_B \widetilde{Z}$ onto $A$ by Definition \ref{def:pre5}. 
Also, the $C-C$-equivalence bimodule $C\otimes_C Y$ is isomorphic to the $C-C$-equivalence bimodule
$Y$ by the isomorphism
$$
C\otimes_C Y\longrightarrow Y : c\otimes y\mapsto c\cdot y
$$
for any $c\in C$, $y\in Y$. The restriction of the above isomorphism to $A\otimes_A X$ is
an isomorphism of $A\otimes_A X$ onto $X$ by Definition \ref{def:pre5}.
By the above discussions, we can see that $g$ is an isomorphism of $\Pic(A, C)$
onto $\Pic(B, D)$.
\end{proof}

Let $\alpha$ be an automorphism of $C$ such that the restriction of $\alpha$ to $A$,
$\alpha|_A$ is an automorphism of $A$. Let $\Aut (A, C)$ be the group of all such
automorphisms. We construct an element in $\Equi (A, C)$ from an element in $\Aut (A, C)$ as follows:
Let $\alpha\in\Aut(A, C)$. Let $Y_{\alpha}$ be the $C-C$-equivalence bimodule
induced by $\alpha$ in the same way as in Preliminaries.
Let $X_{\alpha}$ be the $A-A$-equivalence bimodule induced by $\alpha |_A$ in the same way
as above. Then clearly $(X_{\alpha}, Y_{\alpha})\in \Equi(A, C)$ and for any $\alpha$, $\beta\in\Aut (A, C)$,
$$
[X_{\alpha\circ\beta} \, , \, Y_{\alpha\circ\beta}]=[X_{\alpha} \, , \,Y_{\alpha}][X_{\beta} \, , \, Y_{\beta}]
$$
in $\Pic(A, C)$. Let $\pi$ be the map from $\Aut (A, C)$ to $\Pic(A, C)$ defined by
$$
\pi(\alpha)=[X_{\alpha} \, , \, Y_{\alpha}]
$$
for any $\alpha\in \Aut(A, C)$. By the above discussions, $\pi$ is a homomorphism of
$\Aut (A, C)$ to $\Pic(A, C)$. Let $u$ be a unitary element in $M(A)$. Then $\Ad(u)$ is a
generalized inner automorphism of $A$. Since $\overline{AC}=C$,
by Izumi \cite {Izumi:simple} $u\in M(C)$. Thus $\Ad(u)$ is also
a generalized inner automorphism of $C$. Let $\Int (A, C)$ be the set of all such automorphisms
in $\Aut (A, C)$. We note that $\Int (A, C)=\Int (A)$.
Let $\imath$ be the inclusion map of $\Int (A, C)$ to $\Aut (A, C)$.

\begin{lemma}\label{lem:exact1}With the above notation, the sequence
$$
1\longrightarrow \Int (A, C)\overset{\imath}\longrightarrow\Aut (A, C)\overset{\pi}\longrightarrow\Pic(A, C)
$$
is exact.
\end{lemma}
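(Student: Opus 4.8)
The sequence splits the task into two parts. Exactness at $\Int(A,C)$ is nothing but the injectivity of the inclusion map $\imath$, hence immediate; the real content is exactness at $\Aut(A,C)$, i.e.\ $\Ker\pi=\Int(A,C)$. The plan is to prove the two inclusions $\Int(A,C)\subseteq\Ker\pi$ and $\Ker\pi\subseteq\Int(A,C)$ separately, using throughout the description of $\Ker\pi$ afforded by the observation following Lemma~\ref{lem:simple}: an automorphism $\alpha\in\Aut(A,C)$ lies in $\Ker\pi$ if and only if $[X_\alpha,Y_\alpha]=[A,C]$ in $\Pic(A,C)$, equivalently there is a $C-C$-equivalence bimodule isomorphism $\Phi\colon Y_\alpha\to C$ with $\Phi(X_\alpha)=A$.

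For the inclusion $\Int(A,C)\subseteq\Ker\pi$ I would argue directly. Let $\alpha=\Ad(u)$ with $u$ a unitary in $M(A)$; note that $u$ is also a unitary in $M(C)$ since $\overline{AC}=C$. Define $\Phi\colon Y_\alpha\to C$ by $\Phi(x)=xu$. Then left $C$-linearity is clear (the left actions are ordinary multiplication), right $C$-linearity holds because $\alpha(c)=ucu^*$ exactly cancels the twist $x\cdot c=x\alpha(c)$ in the right action of $Y_\alpha$, and preservation of the left and the right $C$-valued inner products follows from $uu^*=u^*u=1$ together with $\alpha^{-1}=\Ad(u^*)$. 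Hence $\Phi$ is a $C-C$-equivalence bimodule isomorphism, and $\Phi(X_\alpha)=Au=A$ because $u\in M(A)$. Thus $\pi(\alpha)=[A,C]$, the unit of $\Pic(A,C)$.

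For the reverse inclusion $\Ker\pi\subseteq\Int(A,C)$, suppose $\pi(\alpha)=[A,C]$ and fix a $C-C$-equivalence bimodule isomorphism $\Phi\colon Y_\alpha\to C$ with $\Phi(X_\alpha)=A$. Since $Y_\alpha$ coincides with $C$ as a left Hilbert $C$-module (its left module structure and left inner product are the standard ones), $\Phi$ extends to an isomorphism of the multiplier bimodules, and left $M(C)$-linearity then forces it to be right multiplication by a unitary $w\in M(C)$, i.e.\ $\Phi(x)=xw$ for all $x\in Y_\alpha$; comparing the right $C$-actions of $Y_\alpha$ and $C$ under $\Phi$ forces $\alpha(c)=wcw^*$ for all $c\in C$, so $\alpha=\Ad(w)$. (Up to here this is the relative analogue of the classical identification $\Ker(\Aut(C)\to\Pic(C))=\Int(C)$, cf.\ \cite{BGR:linking} and \cite{Kodaka:equivariance}.) It then remains to promote $w$ from $M(C)$ to $M(A)$: the hypothesis $\Phi(X_\alpha)=A$ says $Aw=A$, and since $\Phi^{-1}\colon c\mapsto cw^*$ carries $A$ onto $X_\alpha=A$ we also get $Aw^*=A$; hence $Aw\subseteq A$ and $Aw^*\subseteq A$, and taking adjoints also $wA\subseteq A$ and $w^*A\subseteq A$, so $w$ is a multiplier of $A$, unitary there because $\overline{AC}=C$ forces the units of $M(A)$ and $M(C)$ to coincide. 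Therefore $\alpha=\Ad(w)\in\Int(A,C)$.

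The main obstacle is precisely this last step: extracting that the implementing unitary $w$, a priori only in $M(C)$, actually lies in $M(A)$. This is exactly the place where the hypothesis $\Phi(X_\alpha)=A$ is used essentially, rather than merely the weaker fact that $X_\alpha$ is isomorphic to $A$ as an $A-A$-equivalence bimodule; everything else is a routine transcription of the single-algebra computation with the subbimodule $X_\alpha$ carried along.
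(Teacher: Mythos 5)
Your proof is correct, and for most of its length it runs parallel to the paper's: exactness at $\Aut(A,C)$ is the whole content, the inclusion $\Int(A,C)\subseteq\Ker\pi$ is established by the same explicit bimodule isomorphism (the paper uses $x\mapsto xu^{*}$ from $C$ onto $Y_{\Ad(u)}$; your map $x\mapsto xu$ is its inverse), and for the reverse inclusion both arguments invoke the mechanism of \cite[Proposition 3.1]{BGR:linking} to realize the given isomorphism between $Y_{\alpha}$ and $C$ as right multiplication by a unitary in $M(C)$ implementing $\alpha$. Where you genuinely diverge is in the last step, promoting that unitary to $M(A)$. The paper runs the Brown--Green--Rieffel construction a \emph{second} time, on the restricted isomorphism $\Phi|_{A}\colon A\to X_{\alpha}$, obtaining a unitary $u\in M(A)$ with $\alpha|_{A}=\Ad(u^{*})$, and then identifies the two unitaries by checking $u_{1}a=ua$ for all $a\in A$ and using $\overline{AC}=C$. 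You instead exploit the hypothesis $\Phi(X_{\alpha})=A$ directly: it reads $Aw=A$, hence $Aw^{*}=A$, and taking adjoints $wA\subseteq A$ and $w^{*}A\subseteq A$, so $w$ defines a double centralizer of $A$ and therefore an element of $M(A)$, which agrees with $w$ under the embedding $M(A)\subseteq M(C)$ because both act identically on the dense subspace $AC$ of $C$. Your route is slightly more economical (one application of the BGR construction instead of two) and makes transparent exactly where $\Phi(X_{\alpha})=A$, rather than the weaker $X_{\alpha}\cong A$, is used; the paper's route avoids having to check the compatibility of the double centralizer of $A$ with the embedding $M(A)\hookrightarrow M(C)$, since it produces the $M(A)$-unitary intrinsically. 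Both arguments are valid.
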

\begin{proof}Let $u\in M(A)$. Then $u\in M(C)$. We show that $C\cong Y_{\Ad(u)}$ as $C-C$-equivalence
bimodules. Let $\Phi$ be the map from $C$ to $Y_{\Ad(u)}$ defined by
$\Phi(x)=xu^*$ for any $x\in C$. Then for any $a\in C$, $x, y\in C$,
\begin{align*}
\Phi(a\cdot x) & =\Phi(ax)=axu^* =a\cdot \Phi(x) \\
\Phi(x\cdot a) & =xu^* uau^* =\Phi(x)\cdot a \\
{}_C \la \Phi(x) , \Phi(y) \ra & ={}_C \la xu^* \, , yu^* \ra =xy^* = {}_C \la x, y \ra \\
\la \Phi(x) , \Phi(y) \ra_C & =\la xu^* , yu^* \ra_C =\Ad(u)^{-1}(ux^* yu^* )=x^* y ={} \la x, y \ra_C .
\end{align*}
Hence $\Phi$ is a $C-C$-equivalence bimodule isomorphism of $C$ onto $Y_{\Ad(u)}$.
Furthermore, since $u\in M(A)$, $xu^* \in X_{\Ad(u)}$ for any $x\in A$. In the same way
as above, we can see that $\Phi|_{\Ad(u)}$ is an $A-A$-equivalence bimodule isomorphism
of $A$ onto $X_{\Ad(u)}$. Thus 
$$
[X_{\Ad(u)} \, , \, Y_{\Ad(u)} ]=[A, C]
$$
in $\Pic(A, C)$. Let $\alpha\in\Aut (A, C)$ with $[X_{\alpha}, Y_{\alpha}]=[A, C]$ in
$\Pic(A, C)$. Then there is a $C-C$-equivalence bimodule isomorphism $\Phi$ of $C$
onto $Y_{\alpha}$ such that $\Phi|_A$ is an $A-A$-equivalence bimodule isomorphism
of $A$ onto $X_{\alpha}$. In the same way as the proof of \cite [Proposition 3.1]{BGR:linking},
we can obtain unitary elements $u_1 \in M(C)$ and $u\in M(A)$ such that
\begin{align*}
u_1  =(\Phi\circ\alpha^{-1} \, , \, \Phi) \,, &  \quad u=((\Phi\circ\alpha^{-1})|_A \, , \, \Phi|_A ) \\
\alpha =\Ad(u_1^* ) \, , &  \quad \alpha|_A =\Ad(u^* ) ,
\end{align*}
where $(\Phi\circ\alpha^{-1} , \Phi)$ and $((\Phi\circ\alpha^{-1})|_A , \, \Phi|_A )$
are double centralizers of $C$ and $A$, respectively.
Then for any $a\in A$, $u_1 a=(\Phi\circ\alpha^{-1})(a)=ua$. Since $\overline{AC}=C$,
$u_1 =u$. Hence $\pi([X_{\Ad(u^* )} \, , Y_{\Ad(u^* )}])=[A, C]$. Therefore, we obtain the
conclusion.
\end{proof}

Let $A\subset C$ be an inclusion of $C^*$-algebras such that $A$ is $\sigma$-unital
and $\overline{AC}=C$. Let $\BK$ be the $C^*$-algebra of
all compact operators on a countably infinite dimensional Hilbert space and let $A^s =A\otimes \BK$
and $C^s =C\otimes\BK$, respectively. Let $(X, Y)\in\Equi (A^s, C^s )$.
Let $L_X$ and $L_Y$ be the linking $C^*$-algebras induced by $X$ and $Y$, respectively.
Let
$$
p=\begin{bmatrix}1_A \otimes 1_{M(\BK)} & 0 \\
0 & 0 \end{bmatrix}, \quad
q=\begin{bmatrix}0 & 0 \\
0 & 1_A \otimes 1_{M(\BK)} \end{bmatrix}
$$
in $M(L_X )$. Then $p$ and $q$ are full projections in $M(L_X )$. By easy computations,
we can see that $\overline{L_X L_Y }=L_Y$. Hence $M(L_X )\subset M(L_Y )$ by
Izumi \cite {Izumi:simple}. Since $p$ and
$q$ are full projections in $M(L_X )$,  by Brown \cite [Lemma 2.5]{Brown:hereditary}, 
there is a partial isometry $w\in M(L_X )$ such that $w^* w=p$, $ww^* =q$. Then we note
that $w\in M(L_Y )$. Let $\theta$
be the map from $pL_Y p$ to $qL_Y q$ defined by
$$
\theta(\begin{bmatrix} x & 0 \\
0 & 0 \end{bmatrix})=w\begin{bmatrix} x & 0 \\
0 & 0 \end{bmatrix}w^*
$$
for any $x\in C^s$. By easy computations, we can see that $\theta$ is an isomorphism of $pL_Y p$
onto $qL_Y q$. Identifying $pL_Y p$ and $qL_Y q$ with $C^s$, we can regard $\theta$ as an
automorphism of $C^s$. We also denote it by the same symbol $\theta$. Since $w\in M(L_X )$,
$\theta|_{A^s}$ is an automorphism of $A^s$. Let $(X_{\theta}, Y_{\theta})$ be the element in $\Equi(A, C)$
induced by $\theta$. Then we can see that $[X_{\theta}, Y_{\theta}]=[X, Y]$ in $\Pic(A, C)$ in the
same way as in the proof of \cite [Theorem 3.4]{BGR:linking}. By the above discussions and
Lemma \ref{lem:exact1}. we obtain the following proposition:

\begin{prop}\label{prop:exact2}With the above notation and assumptions, the sequence
$$
1\longrightarrow \Int (A^s , C^s )\overset{\imath}\longrightarrow\Aut (A^s , C^s )\overset{\pi}\longrightarrow
\Pic(A^s , C^s )\longrightarrow 1
$$
is exact.
\end{prop}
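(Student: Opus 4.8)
The plan is to obtain the assertion from Lemma~\ref{lem:exact1} together with the construction performed in the paragraph that precedes the statement. First I would note that, since $A$ is $\sigma$-unital and $\overline{AC}=C$, the stabilized inclusion $A^s=A\otimes\BK\subset C^s=C\otimes\BK$ again satisfies $\overline{A^s C^s}=C^s$, so Lemma~\ref{lem:exact1} applies verbatim to $A^s\subset C^s$ and gives exactness of
$$
1\longrightarrow\Int(A^s,C^s)\overset{\imath}\longrightarrow\Aut(A^s,C^s)\overset{\pi}\longrightarrow\Pic(A^s,C^s).
$$
Hence the injectivity of $\imath$ and the equality $\Ker\pi=\Ima\imath$ come for free, and the only new point is the surjectivity of $\pi$, i.e.\ exactness at $\Pic(A^s,C^s)$.

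For surjectivity I would take an arbitrary $(X,Y)\in\Equi(A^s,C^s)$ and produce an automorphism $\theta\in\Aut(A^s,C^s)$ with $\pi(\theta)=[X,Y]$, following the discussion above the statement. Form the linking $C^*$-algebras $L_X$ and $L_Y$ of the $A^s-A^s$-equivalence bimodule $X$ and of the $C^s-C^s$-equivalence bimodule $Y$. Since $X$ is a closed subspace of $Y$ satisfying Conditions (1), (2) of Definition~\ref{def:pre5}, $L_X$ embeds in $L_Y$ with $\overline{L_X L_Y}=L_Y$, hence $M(L_X)\subset M(L_Y)$ by Izumi~\cite{Izumi:simple}. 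The two corner projections $p,q\in M(L_X)$ are full, and Brown~\cite[Lemma~2.5]{Brown:hereditary} produces a partial isometry $w\in M(L_X)\subset M(L_Y)$ with $w^*w=p$, $ww^*=q$. Conjugation by $w$ gives an isomorphism of $pL_Yp$ onto $qL_Yq$; identifying both corners with $C^s$ turns it into an automorphism $\theta$ of $C^s$, and, because $w\in M(L_X)$, conjugation by $w$ also carries $pL_Xp$ onto $qL_Xq$, so $\theta|_{A^s}$ is an automorphism of $A^s$ and $\theta\in\Aut(A^s,C^s)$.

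The remaining task is to verify $\pi(\theta)=[X_\theta,Y_\theta]=[X,Y]$ in $\Pic(A^s,C^s)$, which is where the argument of \cite[Theorem~3.4]{BGR:linking} is imitated. Using $w$ one writes down an explicit map $\Psi\colon Y\to Y_\theta$ (roughly, $y\mapsto$ the off-diagonal component of $w$ times the copy of $y$ inside $L_Y$) and checks by direct computation, as in Lemma~\ref{lem:exact1}, that $\Psi$ intertwines the left and right $C^s$-actions and the two $C^s$-valued inner products, hence is a $C^s-C^s$-equivalence bimodule isomorphism. The extra ingredient needed for the inclusion is that, because $w\in M(L_X)$, the same $\Psi$ maps the subbimodule $X\subset Y$ onto $X_\theta\subset Y_\theta$; by Lemma~\ref{lem:simple} the restriction $\Psi|_X$ is then automatically an $A^s-A^s$-equivalence bimodule isomorphism of $X$ onto $X_\theta$, so $(X,Y)\sim(X_\theta,Y_\theta)$ and $\pi(\theta)=[X,Y]$. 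This establishes surjectivity and completes the proof.

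The step I expect to be the main obstacle is this last verification: one must set up the BGR-type map $\Psi$ so that it is simultaneously a $C^s$-bimodule isomorphism $Y\cong Y_\theta$ \emph{and} restricts correctly to the pair of subbimodules, and the bookkeeping of the equivalence-bimodule axioms has to survive the two identifications of the corners of $L_X$ and $L_Y$ with $A^s$ and $C^s$. The $\sigma$-unitality of $A$ (and stability of $A^s$) enters only to guarantee the existence of $w$; once $w$ is fixed in $M(L_X)$, every subsequent identification must be checked to respect the inclusion $L_X\subset L_Y$.
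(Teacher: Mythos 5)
Your proposal is correct and follows essentially the same route as the paper: exactness at $\Aut(A^s,C^s)$ comes from Lemma \ref{lem:exact1} applied to the stabilized inclusion, and surjectivity of $\pi$ is obtained via the linking algebras $L_X\subset L_Y$, Izumi's result giving $M(L_X)\subset M(L_Y)$, Brown's lemma producing the partial isometry $w$ intertwining the full corner projections, and the argument of \cite[Theorem 3.4]{BGR:linking} to identify $[X_\theta,Y_\theta]$ with $[X,Y]$. The paper's proof is precisely the paragraph preceding the statement together with Lemma \ref{lem:exact1}, so no further comparison is needed.
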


\section{Some lemmas}\label{sec:lemmas}In this section, we shall prepare some lemmas for the next sections.
Let $A\subset C$, $B\subset D$ and $K\subset L$ be unital inclusions of unital $C^*$-algebras.
Let $E^A$, $E^B$ and $E^K$ be conditional expectations from $C$, $D$ and $L$ onto $A$, $B$ and $K$,
respectively. We suppose that they are of Watatani index-finite type. Also, we suppose that $A\subset C$ and
$B\subset D$ are strongly Morita equivalent with respect to a $C-D$-equivalence bimodule $Y$ and its
closed subspace $X$ and suppose  that $B\subset D$ and $K\subset L$ are strongly Morita equivalent with
respect to a $D-L$-equivalence bimodule $W$ and its closed subspace $Z$.  Also, we suppose that
$A' \cap C=\BC1$. Then $B' \cap D=\BC 1$ and $K' \cap L=\BC1$ by \cite [Lemma 10.3]{KT4:morita}.

\begin{lemma}\label{lem:expectation}With the above notation and assumptions,
there is the unique conditional expectation $E^X$ from
$Y$ onto $X$ with respect to $E^A$ and $E^B$.
\end{lemma}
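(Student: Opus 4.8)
The plan is to treat existence and uniqueness separately. For existence I would invoke Theorem~\ref{thm:pre7}: since $A\subset C$ and $B\subset D$ are strongly Morita equivalent with respect to $Y$ and its closed subspace $X$, and $E^A$ is of Watatani index-finite type, that theorem produces a conditional expectation $F^B$ of Watatani index-finite type from $D$ onto $B$ together with a conditional expectation $E^X$ from $Y$ onto $X$ with respect to $E^A$ and $F^B$. The delicate point is that $F^B$ need not a priori be the prescribed $E^B$, and this is exactly where the hypothesis $A'\cap C=\BC1$ enters: it gives $B'\cap D=\BC1$, so Proposition~\ref{prop:pre2} (applied to $B\subset D$, with $F^B$ in the role of the index-finite-type expectation and $E^B$ in the role of the other one) yields $h\in B'\cap D=\BC1$ with $F^B(h)=1$ and $E^B(d)=F^B(hd)$ for all $d\in D$. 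Writing $h=\lambda1$ and evaluating $F^B(h)=1$ forces $\lambda=1$, hence $E^B=F^B$, and therefore $E^X$ is a conditional expectation from $Y$ onto $X$ with respect to $E^A$ and the given $E^B$.

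For uniqueness I would take any two conditional expectations $E^X$ and $F^X$ from $Y$ onto $X$ with respect to $E^A$ and $E^B$ and set $G=E^X-F^X\colon Y\to X$. Condition~(3) in Definition~\ref{def:pre6} gives, for all $y\in Y$ and $x\in X$,
$$
{}_C \la E^X(y), x \ra = E^A({}_C \la y, x \ra) = {}_C \la F^X(y), x \ra ,
$$
so ${}_C \la G(y), x \ra = 0$. Since $G(y)\in X$ and, by Condition~(1) in Definition~\ref{def:pre5}, the left $A$-valued inner product on the $A-B$-equivalence bimodule $X$ is the restriction to $X$ of ${}_C \la\cdot,\cdot\ra$, this reads ${}_A \la G(y), x \ra = 0$ for every $x\in X$. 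Taking $x=G(y)$ and using positive-definiteness of the Hilbert-module inner product gives $G(y)=0$, and as $y$ was arbitrary, $E^X=F^X$.

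The uniqueness half needs neither the index-finiteness assumptions nor $A'\cap C=\BC1$, so the only real obstacle is the bookkeeping in the existence step: Theorem~\ref{thm:pre7} by itself only guarantees a conditional expectation from $Y$ onto $X$ compatible with \emph{some} index-finite-type conditional expectation on $D$, not necessarily with the given $E^B$, and the point of $B'\cap D=\BC1$ (a consequence of $A'\cap C=\BC1$) is precisely to force that conditional expectation to coincide with $E^B$.
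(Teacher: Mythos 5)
Your proof is correct and follows essentially the same route as the paper: existence via Theorem~\ref{thm:pre7} combined with Proposition~\ref{prop:pre2} and $B'\cap D=\BC1$ to force $F^B=E^B$, and uniqueness via nondegeneracy of the inner product on $X$. The only (immaterial) difference is that you use Condition~(3) of Definition~\ref{def:pre6} with the left $C$-valued inner product for uniqueness, whereas the paper uses Condition~(6) with the right $D$-valued one.
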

\begin{proof}By Theorem \ref {thm:pre7}, we can see that there are a conditional expectation $F^B$
of Watatani index-finite type from $D$ onto $B$ and a conditional expectation $E^X$ from
$Y$ onto $X$ with respect to $E^A$ and $F^B$. But $F^B=E^B$
by Proposition \ref {prop:pre2}
since $A' \cap C=\BC1$. Hence
$E^X$ is a conditional expectation from $Y$ onto $X$ with respect to $E^A$ and $E^B$.
Next we show the uniqueness of $E^X$. Let $F^X$ be another conditional expectation from
$Y$ onto $X$ with respect
to $E^A$ and $E^B$. Then by the definitions of $E^X$ and $F^X$, for any $x\in X$, $y\in Y$,
$$
\la x \, , \, E^X (y) \ra_B =E^B (\la x , y \ra_D )= \la x \, , \, F^X (y) \ra_B .
$$
Hence $E^X (y)=F^X (y)$ for any $y\in Y$.
\end{proof}

By Lemma \ref{lem:expectation}, there is the unique conditional expectation $E^{\widetilde{X}}$ from
$\widetilde{Y}$ onto $\widetilde{X}$ with respect to $E^B$ and $E^A$.

\begin{lemma}\label{lem:conjugate}With the above notation and assumptions, $E^{\widetilde{X}}(\widetilde{y})
=\widetilde{E^X (y)}$ for any $y\in Y$.
\end{lemma}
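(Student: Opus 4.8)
The plan is to define a map $F\colon\widetilde{Y}\to\widetilde{X}$ by $F(\widetilde{y})=\widetilde{E^X (y)}$ for $y\in Y$, to show that $F$ is a conditional expectation from $\widetilde{Y}$ onto $\widetilde{X}$ with respect to $E^B$ and $E^A$, and then to invoke the uniqueness of such a conditional expectation --- that is, the uniqueness of $E^{\widetilde{X}}$ established by Lemma~\ref{lem:expectation} applied to the strong Morita equivalence of $B\subset D$ and $A\subset C$ via $\widetilde{Y}$ and $\widetilde{X}$ --- to conclude that $F=E^{\widetilde{X}}$. This equality is exactly the asserted identity $E^{\widetilde{X}}(\widetilde{y})=\widetilde{E^X (y)}$ for all $y\in Y$.

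First I would note that $F$ is well defined, linear, bounded, and has range $\widetilde{X}$: this is immediate from the corresponding properties of $E^X\colon Y\to X$ together with the fact that $y\mapsto\widetilde{y}$ is an isometric conjugate-linear bijection of $Y$ onto $\widetilde{Y}$ carrying $X$ onto $\widetilde{X}$. Next I would recall the structure of the dual bimodule: regarding $Y$ as a $C-D$-equivalence bimodule, the dual $D-C$-equivalence bimodule $\widetilde{Y}$ satisfies $d\cdot\widetilde{y}=\widetilde{y\cdot d^*}$, $\widetilde{y}\cdot c=\widetilde{c^*\cdot y}$, ${}_D \la\widetilde{y},\widetilde{z}\ra=\la y,z\ra_D$ and $\la\widetilde{y},\widetilde{z}\ra_C={}_C \la y,z\ra$ for $c\in C$, $d\in D$, $y,z\in Y$, and similarly for $\widetilde{X}$ inside $\widetilde{Y}$.

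Then I would verify the six conditions of Definition~\ref{def:pre6} for $F$, reading that definition with $C,A,D,B,Y,X,E^A,E^B,E^X$ replaced by $D,B,C,A,\widetilde{Y},\widetilde{X},E^B,E^A,F$, respectively. Unwinding the dual operations above, each condition reduces to exactly one condition of Definition~\ref{def:pre6} for $E^X$, once one also uses that $E^A$ and $E^B$ are $*$-preserving. For instance, condition~(1) becomes $F(d\cdot\widetilde{x})=E^B (d)\cdot\widetilde{x}$ for $d\in D$, $x\in X$, and indeed $F(d\cdot\widetilde{x})=\widetilde{E^X (x\cdot d^*)}=\widetilde{x\cdot E^B (d)^*}=E^B (d)\cdot\widetilde{x}$ by condition~(4) for $E^X$; condition~(3) becomes $E^B ({}_D \la\widetilde{y},\widetilde{x}\ra)={}_D \la F(\widetilde{y}),\widetilde{x}\ra$, which follows from condition~(6) for $E^X$ after transporting the inner products through the identifications above. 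Conditions~(2),(4),(5),(6) are handled in the same manner from conditions~(5),(1),(2),(3) for $E^X$, respectively; I would write out one or two of these and leave the rest as routine.

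I do not expect a genuine obstacle here: the only thing requiring care is the systematic interchange of the roles $A\leftrightarrow B$, $C\leftrightarrow D$, $Y\leftrightarrow\widetilde{Y}$, and the placement of the adjoints introduced by the dual-bimodule operations, so that the six conditions for $F$ are matched with the six conditions for $E^X$ in the correct (permuted) order. Once $F$ is known to be a conditional expectation from $\widetilde{Y}$ onto $\widetilde{X}$ with respect to $E^B$ and $E^A$, uniqueness forces $F=E^{\widetilde{X}}$, which is the claim.
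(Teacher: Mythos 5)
Your proposal is correct and matches the paper's intent: the paper's proof is just the one-line remark that the claim is ``immediate by Definition \ref{def:pre6} and routine computations,'' and your argument --- checking that $\widetilde{y}\mapsto\widetilde{E^X(y)}$ satisfies the six conditions (in the permuted order you indicate) and then invoking the uniqueness from Lemma \ref{lem:expectation} --- is exactly the routine computation being alluded to. The matching of conditions (1)--(6) for $F$ with (4), (5), (6), (1), (2), (3) for $E^X$ is accurate.
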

\begin{proof}This is immediate by Definition \ref{def:pre6} and routine computations.
\end{proof}

Also, by Lemma \ref {lem:expectation}, there are the unique conditional expectations $E^X$ and
$E^Z$ from $Y$ and $W$ onto $X$ and $Z$ with respect to $E^A$, $E^B$ and $E^B$, $E^K$,
respectively. Also, there is the unique conditional expectation $E^{X\otimes_B Z}$ from $Y\otimes_D W$
onto $X\otimes_B Z$ with respect to $E^A$ and $E^K$. Let $C_1$, $D_1$ and $L_1$ be the
$C^*$-basic constructions for $E^A$, $E^B$ and $E^K$, respectively. Let $Y_1$ and $W_1$
be the upward basic constructions of $Y$ and $W$ for $E^X$ and $E^Z$, respectively and let
$(Y\otimes_D W)_1$ be the upward basic construction of $Y\otimes_D W$ for $E^{X\otimes_B Z}$.

\begin{lemma}\label{lem:prepare2}With the above notation and asumptions,
$(Y\otimes_D W)_1 \cong Y_1 \otimes_{D_1}W_1$ as $C_1 -D_1$-equivalence bimodules.
\end{lemma}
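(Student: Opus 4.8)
The plan is to obtain the isomorphism as a composition of canonical equivalence-bimodule isomorphisms, by writing out the three upward basic constructions explicitly and cancelling a dual pair.

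First I would recall, from the construction preceding Theorem~\ref{thm:pre8}, that when $C$, $D$ and $L$ are regarded as $C_1-A$-, $D_1-B$- and $L_1-K$-equivalence bimodules respectively, one has
$$
Y_1 = C\otimes_A X\otimes_B\widetilde{D}, \qquad W_1 = D\otimes_B Z\otimes_K\widetilde{L},
$$
while, since $(Y\otimes_D W)_1$ is by definition the upward basic construction of the $C-L$-equivalence bimodule $Y\otimes_D W$ for the conditional expectation $E^{X\otimes_B Z}$,
$$
(Y\otimes_D W)_1 = C\otimes_A (X\otimes_B Z)\otimes_K\widetilde{L},
$$
where $X\otimes_B Z$ is identified, via Lemma~\ref{lem:bimodule}, with the closed subspace $X\otimes_D Z$ of $Y\otimes_D W$. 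In particular $Y_1\otimes_{D_1}W_1$ and $(Y\otimes_D W)_1$ are both $C_1-L_1$-equivalence bimodules (this being, since $Y$ is a $C-D$- and $W$ a $D-L$-equivalence bimodule, the pair of algebras relevant to the statement).

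Next I would invoke the cancellation isomorphism for the $D_1-B$-equivalence bimodule $D$: since $C_1$ is strongly Morita equivalent to $A$ via $C$, likewise $D$ is a full $D_1-B$-equivalence bimodule with right $B$-valued inner product $\la d,d'\ra_B = E^B(d^* d')$, and hence
$$
\widetilde{D}\otimes_{D_1}D\longrightarrow B:\quad \widetilde{d}\otimes d'\longmapsto\la d,d'\ra_B
$$
is a $B-B$-equivalence bimodule isomorphism; well-definedness over $D_1$ comes from $\widetilde{d}\cdot T\otimes d' = \widetilde{d}\otimes T\cdot d'$ together with $\la T^*\cdot d,d'\ra_B = \la d,T\cdot d'\ra_B$ for $T\in D_1$, surjectivity from fullness, and compatibility with the actions and inner products from the usual imprimitivity-bimodule computations. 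Tensoring this isomorphism on the left by $C\otimes_A X$ and on the right by $Z\otimes_K\widetilde{L}$, and using associativity of the interior tensor product of equivalence bimodules, I get $C_1-L_1$-equivalence bimodule isomorphisms
$$
Y_1\otimes_{D_1}W_1 = (C\otimes_A X\otimes_B\widetilde{D})\otimes_{D_1}(D\otimes_B Z\otimes_K\widetilde{L})\cong C\otimes_A X\otimes_B(\widetilde{D}\otimes_{D_1}D)\otimes_B Z\otimes_K\widetilde{L}\cong C\otimes_A X\otimes_B B\otimes_B Z\otimes_K\widetilde{L}.
$$
Finally I would collapse $X\otimes_B B\cong X$ and then rewrite $X\otimes_B Z$ as the closed subspace $X\otimes_D Z$ of $Y\otimes_D W$ using Lemma~\ref{lem:bimodule}, turning the last module into $C\otimes_A(X\otimes_B Z)\otimes_K\widetilde{L} = (Y\otimes_D W)_1$; composing all the intermediate maps yields the asserted isomorphism.

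Most of this is routine — the unit isomorphisms $X\otimes_B B\cong X$ and the associativity isomorphisms automatically preserve both one-sided actions and both inner products, and the cancellation isomorphism is of the same elementary nature. The step requiring the most care, and the point I expect to be the main obstacle, will be matching identifications: one must check that the copy of $X\otimes_B Z$ appearing after cancelling $\widetilde{D}\otimes_{D_1}D\cong B$ is carried by the isomorphism of Lemma~\ref{lem:bimodule} onto exactly the closed subspace $X\otimes_D Z\subset Y\otimes_D W$ used in the definition of $(Y\otimes_D W)_1$, so that the end of the chain is genuinely $(Y\otimes_D W)_1$ and not merely an abstractly isomorphic bimodule. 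Tracing elementary tensors of the form $c\otimes x\otimes\widetilde{d}\otimes d'\otimes x'\otimes z\otimes\widetilde{\ell}$ through every map in the chain makes this transparent, but it is the one place where the identifications must be pinned down explicitly.
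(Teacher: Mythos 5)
Your proposal is correct and follows essentially the same route as the paper: composing your cancellation isomorphism $\widetilde{D}\otimes_{D_1}D\cong B$ with the unit and associativity isomorphisms sends $c\otimes x\otimes\widetilde{d}\otimes d'\otimes z\otimes\widetilde{\ell}$ to $c\otimes x\otimes E^B(d^*d')\cdot z\otimes\widetilde{\ell}$, which is exactly the map $\Phi$ the paper writes down directly and checks by routine computation. (The stray factor $x'$ in your final elementary tensor is a typo; otherwise the identifications match.)
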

\begin{proof}
By the definitions of $Y_1$, $W_1$ and $(Y\otimes_D W)_1$,
\begin{align*}
Y_1 =C\otimes_A  & X\otimes_B \widetilde{D} \, , \quad W_1 =D\otimes_B Z \otimes_K \widetilde{L} , \\
(Y\otimes_D W)_1 & =C\otimes_A X\otimes_B Z\otimes_K \widetilde{L}
\end{align*}
Thus
$$
Y_1 \otimes_{D_1}W_1 =C\otimes_A X \otimes_B \widetilde{D}\otimes_{D_1}D\otimes_B Z\otimes_K\widetilde{L} .
$$
Let $\Phi$ be the map from $Y_1 \otimes_{D_1}W_1$ to $(Y\otimes_D W)_1$ defined by
$$
\Phi(c\otimes x\otimes\widetilde{d}\otimes d' \otimes z\otimes\widetilde{l})=c\otimes x \otimes E^B (d^* d' )\cdot z\otimes \widetilde{l}
$$
for any $c\in C$, $x\in X$, $d, d' \in D$, $z\in Z$, $l\in L$. By routine computations, $\Phi$ is a $C_1 -D_1$-equivalence bimodule isomorphism of $Y_1 \otimes_{D_1}W_1$ onto $(Y\otimes_D W)_1$. Therefore, we obtain the conclusion.
\end{proof}

Let $\{(u_i , u_i^* )\}$, $\{(v_i , v_i^* )\}$ and $\{(s_i , s_i^* )\}$ be quasi-bases for conditional expectations
$E^A$, $E^B$ and $E^K$, respectively. We recall that $Y$ and $W$ are regarded as closed subspaces of $Y_1$
and $W_1$ by the linear maps $\phi_Y$ and $\phi_W$ defined by
\begin{align*}
\phi_Y (y) & =\sum_{i, j}u_i\otimes E^X (u_i^* \cdot y\cdot v_j )\otimes\widetilde{v_j} , \\
\phi_W (w) & =\sum_{k, l}v_k \otimes E^Z (v_k^* \cdot w\cdot s_l )\otimes\widetilde{s_l}
\end{align*}
for any $y\in Y$, $w\in W$, respectively.

\begin{lemma}\label{lem:prepare3}With the above notation and asssumptions,
$$
E^{X\otimes_B Z}(y\otimes w)=\sum_j E^X (y\cdot v_j )\otimes E^Z (v_j^* \cdot w)
$$ 
for any $y\in Y$, $w\in W$.
\end{lemma}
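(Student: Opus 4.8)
The plan is to exploit the uniqueness in Lemma \ref{lem:expectation}: since $E^{X\otimes_B Z}$ is the \emph{unique} conditional expectation from $Y\otimes_D W$ onto $X\otimes_B Z$ with respect to $E^A$ and $E^K$, I need not reconstruct it but only check that the proposed formula gives an element of $X\otimes_B Z$ with the correct $K$-valued inner products. First I would note that for fixed $y\in Y$, $w\in W$ the finite sum $\sum_j E^X (y\cdot v_j )\otimes E^Z (v_j^* \cdot w)$ lies in $X\otimes_B Z$, since $E^X (y\cdot v_j )\in X$ and $E^Z (v_j^* \cdot w)\in Z$. As $X\otimes_B Z$ is an equivalence bimodule, its right $K$-valued inner product is nondegenerate and the elements $x\otimes z$ with $x\in X$, $z\in Z$ span a dense subspace, so it is enough to prove
$$
\la x\otimes z , \sum_j E^X (y\cdot v_j )\otimes E^Z (v_j^* \cdot w)\ra_K = \la x\otimes z , E^{X\otimes_B Z}(y\otimes w)\ra_K
$$
for all $x\in X$, $y\in Y$, $z\in Z$, $w\in W$.

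For the right-hand side I would apply Condition (6) in Definition \ref{def:pre6} to $E^{X\otimes_B Z}$ (with respect to $E^A$ and $E^K$), after passing to adjoints, to get $\la x\otimes z , E^{X\otimes_B Z}(y\otimes w)\ra_K = E^K (\la x\otimes z , y\otimes w\ra_L )$, and then use the formula for the $L$-valued inner product on the relative tensor product to rewrite this as $E^K (\la z , \la x, y\ra_D \cdot w\ra_L )$. A second application of Condition (6) in Definition \ref{def:pre6}, this time to $E^Z$ (with respect to $E^B$ and $E^K$), turns it into $\la z , E^Z (\la x, y\ra_D \cdot w)\ra_K$.

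For the left-hand side I would expand the $K$-valued inner product on $X\otimes_B Z$ to obtain $\sum_j \la z , \la x, E^X (y\cdot v_j )\ra_B \cdot E^Z (v_j^* \cdot w)\ra_K$; then Condition (6) in Definition \ref{def:pre6} applied to $E^X$ gives $\la x, E^X (y\cdot v_j )\ra_B = E^B (\la x, y\cdot v_j \ra_D ) = E^B (\la x, y\ra_D v_j )$, while Condition (2) in Definition \ref{def:pre6} applied to $E^Z$ lets me pull the $B$-coefficient inside the expectation: $E^B (\la x, y\ra_D v_j )\cdot E^Z (v_j^* \cdot w) = E^Z ((E^B (\la x, y\ra_D v_j )v_j^* )\cdot w)$. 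Summing over $j$ and invoking the quasi-basis relation $\sum_j E^B (d v_j )v_j^* = d$ with $d = \la x, y\ra_D$, the whole left-hand side collapses to $\la z , E^Z (\la x, y\ra_D \cdot w)\ra_K$, which is exactly the right-hand side; nondegeneracy of the $K$-valued inner product then finishes the proof.

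The computations are all routine once the bookkeeping is set up; the only genuine care is in keeping track of the various inner products — those of $X$ and of $Y$, those of $Z$ and of $W$, and those of $X\otimes_B Z$ and of $Y\otimes_D W$ — together with the fact that on a subbimodule the coarser inner product restricts to the finer one, so that, for instance, $\la z , \zeta\ra_L = \la z , \zeta\ra_K$ whenever $\zeta\in Z$. I expect that identification, and the correct use of Definition \ref{def:pre6}(6) after passing to adjoints, to be the main — though ultimately minor — obstacle; there is no essential difficulty.
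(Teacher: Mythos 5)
Your proof is correct, and it rests on the same pillar as the paper's, namely the uniqueness statement of Lemma \ref{lem:expectation}; but the execution is genuinely different. The paper's own proof simply asserts that one need only check that the right-hand side defines a linear map from $Y\otimes_D W$ onto $X\otimes_B Z$ satisfying all of Conditions (1)--(6) of Definition \ref{def:pre6} with respect to $E^A$ and $E^K$, leaves those six verifications as routine, and then lets uniqueness force the map to coincide with $E^{X\otimes_B Z}$. You instead observe that the uniqueness argument in Lemma \ref{lem:expectation} really only uses Condition (6), so it suffices to show that the proposed element lies in $X\otimes_B Z$ and has the same $K$-valued inner product as $E^{X\otimes_B Z}(y\otimes w)$ against the dense span of elementary tensors $x\otimes z$; you then carry out that single computation explicitly, using the quasi-basis relation $\sum_j E^B (dv_j )v_j^* =d$ together with Conditions (2) and (6) for $E^X$, $E^Z$ and $E^{X\otimes_B Z}$, and your bookkeeping (adjoints in Condition (6), the compatibility $\la z,\zeta\ra_L =\la z,\zeta\ra_K$ for $\zeta\in Z$) is accurate. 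What this buys you is a shorter verification --- one inner-product identity plus nondegeneracy of the $K$-valued inner product on the equivalence bimodule $X\otimes_B Z$ --- in place of six bimodule conditions; what the paper's route buys is that the explicit formula is itself exhibited as a conditional expectation, which is mildly more information than the lemma demands. Both arguments are complete proofs of the stated identity.
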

\begin{proof}By Lemma \ref {lem:expectation}, we have only to show the right hand side
in the above equation defines a linear map
satisfying Conditions (1)-(6) in Definition \ref {def:pre6}. They are proved in the routine computations.
\end{proof}

Also, we recall that $Y\otimes_D W$ is regarded as a closed subspace of $(Y\otimes_D W)_1$ by the
linear map $\phi_{Y\otimes_D W}$ defined by
$$
\phi_{Y\otimes_D W}(y\otimes w)=\sum_{i, l}u_i \otimes E^{X\otimes_B Z}(u_i \cdot y\otimes w\cdot s_l)
\otimes\widetilde{s_l}
$$
for any $y\in Y$, $w\in W$.

\begin{lemma}\label{lem:prepare4}With the above notation, let $\Phi$ be the $C_1 -L_1$-equivalence
bimodule isomorphism of $Y_1 \otimes_{D_1}W_1$ onto $(Y\otimes_D W)_1$ defined in Lemma
\ref {lem:prepare2}. Then
$$
\phi_{Y\otimes_D W}=\Phi\circ (\phi_Y \otimes \phi_W ) .
$$
\end{lemma}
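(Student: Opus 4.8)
The plan is to verify the identity $\phi_{Y\otimes_D W} = \Phi \circ (\phi_Y \otimes \phi_W)$ by evaluating both sides on an elementary tensor $y \otimes w$ with $y \in Y$, $w \in W$, and comparing the resulting expressions inside $(Y\otimes_D W)_1 = C \otimes_A X \otimes_B Z \otimes_K \widetilde{L}$. Since $(Y\otimes_D W)_1$ is spanned (densely) by such elements, and every map in sight is continuous and linear, this suffices.

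First I would compute the right-hand side. By definition, $\phi_Y(y) = \sum_{i,j} u_i \otimes E^X(u_i^* \cdot y \cdot v_j) \otimes \widetilde{v_j}$ and $\phi_W(w) = \sum_{k,l} v_k \otimes E^Z(v_k^* \cdot w \cdot s_l) \otimes \widetilde{s_l}$, so $(\phi_Y \otimes \phi_W)(y \otimes w)$ is the sum over $i,j,k,l$ of $u_i \otimes E^X(u_i^* \cdot y \cdot v_j) \otimes \widetilde{v_j} \otimes v_k \otimes E^Z(v_k^* \cdot w \cdot s_l) \otimes \widetilde{s_l}$. Applying $\Phi$ — which acts as $c \otimes x \otimes \widetilde{d} \otimes d' \otimes z \otimes \widetilde{l} \mapsto c \otimes x \otimes E^B(d^* d') \cdot z \otimes \widetilde{l}$ — replaces $\widetilde{v_j} \otimes v_k$ by the scalar factor $E^B(v_j^* v_k)$ acting on the $Z$-slot. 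The key simplification is then to collapse the sum over $k$ using the quasi-basis relation $\sum_k v_k E^B(v_k^* d) = d$ for $E^B$: one gets $\sum_k E^B(v_j^* v_k) \cdot E^Z(v_k^* \cdot w \cdot s_l) = \sum_k E^Z\bigl(E^B(v_j^* v_k) v_k^* \cdot w \cdot s_l\bigr) = E^Z(v_j^* \cdot w \cdot s_l)$, using left $B$-linearity of $E^Z$ together with $B$-linearity properties from Definition \ref{def:pre6}. This leaves $\sum_{i,j,l} u_i \otimes E^X(u_i^* \cdot y \cdot v_j) \otimes E^Z(v_j^* \cdot w \cdot s_l) \otimes \widetilde{s_l}$.

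Next I would compute the left-hand side $\phi_{Y\otimes_D W}(y \otimes w) = \sum_{i,l} u_i \otimes E^{X\otimes_B Z}\bigl(u_i^* \cdot (y \otimes w) \cdot s_l\bigr) \otimes \widetilde{s_l}$, and invoke Lemma \ref{lem:prepare3}, which gives $E^{X\otimes_B Z}(y' \otimes w') = \sum_j E^X(y' \cdot v_j) \otimes E^Z(v_j^* \cdot w')$ for any $y' \in Y$, $w' \in W$. Taking $y' = u_i^* \cdot y$ and $w' = w \cdot s_l$ (and noting the action $u_i^* \cdot (y \otimes w) \cdot s_l = (u_i^* \cdot y) \otimes (w \cdot s_l)$), this becomes $\sum_{i,j,l} u_i \otimes E^X(u_i^* \cdot y \cdot v_j) \otimes E^Z(v_j^* \cdot w \cdot s_l) \otimes \widetilde{s_l}$, which is exactly the expression obtained for the right-hand side. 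Hence the two maps agree on elementary tensors, and by density and continuity they agree on all of $(Y\otimes_D W)_1$.

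The step I expect to be the main obstacle is the collapse of the $k$-sum: one has to be careful that $E^B(v_j^* v_k)$ is genuinely an element of $B$ that can be moved inside $E^Z$ (this uses that $E^Z$ is $B$-linear in the appropriate sense coming from Conditions (4)--(5) of Definition \ref{def:pre6}, applied with the roles of the algebras as in the $D-L$ setting), and that the quasi-basis identity for $E^B$ is applied in the correct order. A secondary point of care is keeping track of which tensor-product balancing relations ($\otimes_A$, $\otimes_B$, $\otimes_{D_1}$, $\otimes_K$) are being used when identifying slots — in particular that $\widetilde{D} \otimes_{D_1} D$ contributes the factor $E^B(d^* d')$ precisely because of how $C$ and $D$ sit inside $C_1$ and $D_1$ and how $\phi_Y, \phi_W$ were defined. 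Once these bookkeeping issues are handled, the identity follows by direct comparison with no further input.
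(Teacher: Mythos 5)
Your proposal is correct and follows essentially the same route as the paper's proof: expand $(\phi_Y\otimes\phi_W)(y\otimes w)$, apply $\Phi$ to turn $\widetilde{v_j}\otimes v_k$ into the factor $E^B(v_j^*v_k)$ on the $Z$-slot, collapse the extra sum via the quasi-basis identity $\sum_k E^B(v_j^*v_k)v_k^*=v_j^*$ together with left $B$-linearity of $E^Z$, and match the result against $\phi_{Y\otimes_D W}(y\otimes w)$ computed from Lemma \ref{lem:prepare3}. The only quibble is bookkeeping: the left $B$-linearity $E^Z(b\cdot w)=b\cdot E^Z(w)$ comes from Condition (2) of Definition \ref{def:pre6} (not Conditions (4)--(5)), and the quasi-basis identity used is the right-handed one $\sum_k E^B(dv_k)v_k^*=d$; neither affects the validity of the argument.
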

\begin{proof}For any $y\in Y$, $w\in W$,
$$
\phi_Y (y)\otimes\phi_W (w)=\sum_{i,j,t,l}u_i \otimes E^X (u_i^* \cdot y\cdot v_j )\otimes\widetilde{v_j}
\otimes v_t \otimes E^Z (v_t^* \cdot w\cdot s_l )\otimes\widetilde{s_l} .
$$
By the definition of $\Phi$,
\begin{align*}
& \Phi(\phi_Y (y)\otimes\phi_W (w)) \\
& =\sum_{i, j, t, l}u_i \otimes E^X (u_i^* \cdot y\cdot v_j )
\otimes E^A (v_j^* v_t )\cdot E^Z (v_t^* \cdot w\cdot s_l )\otimes\widetilde{s_l} \\
& =\sum_{i, j, t, l}u_i \otimes E^X (u_i^* \cdot y\cdot v_j )\otimes E^Z (E^A (v_j^* v_t )v_t^* \cdot w\cdot s_l )
\otimes\widetilde{s_l} \\
& =\sum_{i, j, l}u_i \otimes E^X (u_i^* \cdot y\cdot v_j )\otimes E^Z (v_j^* \cdot w\cdot s_l )\otimes\widetilde{s_l} .
\end{align*}
On the other hand, by Lemma \ref {lem:prepare3}
\begin{align*}
\phi_{Y\otimes_D W}(y\otimes w) & =\sum_{i, l}u_i \otimes E^{X\otimes_B Z}
(u_i^* \cdot y\otimes w\cdot s_l )\otimes\widetilde{s_l} \\
& =\sum_{i, j, l}u_i \otimes E^X (u_i^* \cdot y\cdot v_j )\otimes E^Z (v_j^* \cdot w\cdot s_l )\otimes\widetilde{s_l} .
\end{align*}
Therefore we obtain the conclsuion.
\end{proof}

\section{The $C^*$-basic construction}
\label{sec:construction}
Let $A\subset C$ be a unital inclusion of unital $C^*$-algebras. We suppose that $A' \cap C=\BC1$ and that 
there is a conditional expectation $E^A$ of Watatani index-finite type from $C$ onto $A$.
We denote its Watatani index by $\Ind_W (E^A )$. Then $\Ind_W (E^A )\in\BC1$.
Let $C_1$ be the $C^*$-basic
construction for $E^A$ and $e_A$ the Jones projection for $E^A$.
Let $(X, Y)\in \Equi(A, C)$.
Let $Y_1$ be the upward basic construction of $Y$ for $E^X$. Then $Y_1$ is uniquely determined
by Lemma \ref{lem:expectation}. We recall that $Y$ is regarded as a
closed subspace of $Y_1$ by the map, which is denoted by $\phi_Y$, from $Y$ to $Y_1$
defined by
$$
\phi_Y(y)=\sum_{i, j}u_i\otimes E^X (u_i^* \cdot y\cdot u_j )\otimes\widetilde{u_j} ,
$$
where $\{(u_i, u_i^* )\}_i$ is a quasi-basis for $E^A$, which is independent of the choice
of a quasi-basis for $E^A$ (See Preliminaries). Let $f$ be the map from $\Pic(A, C)$ to $\Pic(C, C_1 )$
defined by
$$
f([X, Y])=[Y, Y_1 ]
$$
for any $(X, Y)\in \Equi(A, C)$. Then by Theorem \ref {thm:pre8}, $(Y, Y_1 )\in \Equi(C, C_1 )$.
Hence $f$ is well-defined. In this section, we shall show that $f$ is an isomorphism
of $\Pic(A, C)$ onto $\Pic(C, C_1 )$. First we show that $f$ is a homomorphism of  $\Pic(A,C)$ to
$\Pic(C, C_1 )$.

\begin{lemma}\label{lem:product}With the same notation as above, $f$ is a homomorphism of $\Pic(A, C)$
to $\Pic(C, C_1 )$.
\end{lemma}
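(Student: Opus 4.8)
The plan is to fix arbitrary $(X,Y),(Z,W)\in\Equi(A,C)$ and verify the identity $f([X,Y][Z,W])=f([X,Y])\,f([Z,W])$, the point being that all the essential content has already been established in Section \ref{sec:lemmas}, specialized to the case where the inclusions ``$B\subset D$'' and ``$K\subset L$'' there are both taken to be $A\subset C$ (so that $E^B=E^K=E^A$, $D_1=L_1=C_1$, and a single quasi-basis $\{(u_i,u_i^*)\}$ for $E^A$ plays all the roles of the bases for $E^A$, $E^B$, $E^K$). Since $A'\cap C=\BC1$, Lemma \ref{lem:expectation} makes the conditional expectations $E^X$, $E^Z$ and $E^{X\otimes_A Z}$, hence the upward basic constructions $Y_1$, $W_1$, $(Y\otimes_C W)_1$ and the embeddings $\phi_Y$, $\phi_W$, $\phi_{Y\otimes_C W}$, all unambiguous; recall also that $\Ind_W(E^A)\in\BC1$.

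First I would unwind the two sides. By the definitions of the product in $\Pic(A,C)$ and of $f$,
\[
f([X,Y][Z,W])=f([X\otimes_A Z,\,Y\otimes_C W])=[Y\otimes_C W,\,(Y\otimes_C W)_1],
\]
where $Y\otimes_C W$ is regarded as a closed subspace of $(Y\otimes_C W)_1$ via $\phi_{Y\otimes_C W}$. On the other hand, by Theorem \ref{thm:pre8} we have $(Y,Y_1),(W,W_1)\in\Equi(C,C_1)$, and by the definition of the product in $\Pic(C,C_1)$,
\[
f([X,Y])\,f([Z,W])=[Y,Y_1]\,[W,W_1]=[Y\otimes_C W,\,Y_1\otimes_{C_1}W_1],
\]
where now $Y\otimes_C W$ is the closed subspace of $Y_1\otimes_{C_1}W_1$ spanned by the elements $\phi_Y(y)\otimes\phi_W(w)$, i.e.\ the subspace ``$Y\otimes_{C_1}W$'' in the notation of Lemma \ref{lem:bimodule}.

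It therefore remains to exhibit a $C_1-C_1$-equivalence bimodule isomorphism of $Y_1\otimes_{C_1}W_1$ onto $(Y\otimes_C W)_1$ carrying the first copy of $Y\otimes_C W$ onto the second, and this is exactly what Lemmas \ref{lem:prepare2} and \ref{lem:prepare4} provide: the map $\Phi$ of Lemma \ref{lem:prepare2} is such an isomorphism, and Lemma \ref{lem:prepare4} yields $\phi_{Y\otimes_C W}=\Phi\circ(\phi_Y\otimes\phi_W)$, so $\Phi$ sends the closed span of $\{\phi_Y(y)\otimes\phi_W(w)\}$ onto $\phi_{Y\otimes_C W}(Y\otimes_C W)$. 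By Lemma \ref{lem:simple}, the restriction of $\Phi$ to this subspace is a $C-C$-equivalence bimodule isomorphism onto the subspace $Y\otimes_C W$ of $(Y\otimes_C W)_1$; hence $(Y\otimes_C W,\,Y_1\otimes_{C_1}W_1)\sim(Y\otimes_C W,\,(Y\otimes_C W)_1)$ in $\Equi(C,C_1)$, the two displayed classes coincide, and $f$ is a homomorphism.

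I do not expect a genuine obstacle here; the only thing requiring care is the bookkeeping of the specialization — checking that with both ``$B\subset D$'' and ``$K\subset L$'' equal to $A\subset C$ all the standing hypotheses of Section \ref{sec:lemmas} are met (they are, since $(X,Y),(Z,W)\in\Equi(A,C)$, $E^A$ is of Watatani index-finite type, and $A'\cap C=\BC1$), and that the quasi-basis for $E^A$ used to define $\phi_Y$ in this section is legitimately the common quasi-basis appearing in Lemmas \ref{lem:prepare3} and \ref{lem:prepare4}. Since all the conditional expectations involved are $E^A$ and all the maps involved are independent of the choice of quasi-basis, this matching is automatic.
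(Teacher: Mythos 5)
Your proposal is correct and follows essentially the same route as the paper's own proof: both reduce the multiplicativity of $f$ to Lemmas \ref{lem:prepare2} and \ref{lem:prepare4}, using the isomorphism $\Phi$ of $Y_1\otimes_{C_1}W_1$ onto $(Y\otimes_C W)_1$ together with the identity $\phi_{Y\otimes_C W}=\Phi\circ(\phi_Y\otimes\phi_W)$ to see that $\Phi$ preserves the embedded copies of $Y\otimes_C W$. Your write-up merely makes explicit the specialization $B\subset D=K\subset L=A\subset C$ and the final appeal to Lemma \ref{lem:simple}, which the paper leaves implicit.
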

\begin{proof}Let $(X, Y)$, $(Z, W)\in \Equi(A, C)$. Then
$$
f([X, Y][Z, W])=f([X\otimes_A Z \, , \, Y\otimes_C W])=[Y\otimes_C W \, , \, (Y\otimes_C W)_1 ] ,
$$
where $(Y\otimes_C W)_1$ is the upward basic construction of $Y\otimes_C W$
for $E^{X\otimes_A Z}$ where $E^{X\otimes_A Z}$ is the conditional expectation from $Y\otimes_C W$
onto $X\otimes_A Z$ with respect to $E^A$ and $E^A$.
By Lemmas \ref{lem:prepare2}, \ref{lem:prepare4},
we can see that there is a $C_1-C_1$-equivalence bimodule 
isomorphism preserving the elements in $Y\otimes_C W$.
Therefore, we obtain that
$$
f([X, Y][Z, W])=f([X, Y])f([Z, W]) .
$$
\end{proof}

Let $E^C$ be the dual conditional expectation from $C_1$ onto $C$.
Let $e_C$ and $C_2$ be the Jones projection and the $C^*$-basic construction
for $E^C$, respectively. Then the unital inclusion $C_1 \subset C_2$ is strongly
Morita equivalent to the unital inclusion $A\subset C$ with respect to the
$C_2 -C$-equivalence bimodule $C_1$ and its closed subspace $C$ by
\cite [Lemma 4.2]{KT4:morita}, where $C$ is regarded as a closed subspace of $C_1$
by the linear map $\theta_C$ defined by
$$
\theta_C (a)=\Ind_W (E^A )^{\frac{1}{2}}ae_A
$$
for any $a\in C$. Let $g$ be the map from $\Pic(A, C)$ to $\Pic(C_1 , C_2 )$
defined by
$$
g([X, Y])=[C\otimes_A X\otimes_A \widetilde{C}\, , \, C_1 \otimes_C Y\otimes _C \widetilde{C_1}]
$$
for any $(X, Y)\in \Equi(A, C)$. Then $g$ is an isomorphism of $\Pic(A, C)$ onto $\Pic(C_1 , C_2 )$
by Lemma \ref{lem:iso1}. Let $f_1$ be the homomorphism of $\Pic(C, C_1 )$ to $\Pic(C_1 , C_2 )$
defined by
$$
f_1 ([Y, Y_1 ])=[Y_1 , Y_2 ]
$$
for any $(Y, Y_1 )\in \Equi (C, C_1 )$, where $Y_2$ is the upward basic construction
of $Y_1$ for $E^Y$ and $E^Y$ is the conditional expectation from $Y_1$ onto $Y$ with
respect to $E^C$ and $E^C$.

\begin{lemma}\label{lem:injective}With the above notation, $f_1 \circ f=g$ on $\Pic(A, C)$.
\end{lemma}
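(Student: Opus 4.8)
The plan is to verify, for each $(X,Y)\in\Equi(A,C)$, that $f_1(f([X,Y]))=[Y_1,Y_2]$ and $g([X,Y])=[C\otimes_A X\otimes_A\widetilde{C}\,,\,C_1\otimes_C Y\otimes_C\widetilde{C_1}]$ are represented by the same pair of bimodules and that the two embeddings of the small bimodule into the large one agree. I would begin by observing that the explicit formula for the upward basic construction, applied with $B=A$ and $D=C$, gives $Y_1=C\otimes_A X\otimes_A\widetilde{C}$ as a $C_1-C_1$-equivalence bimodule; and since $Y_2$ is by definition the upward basic construction of $Y_1$ for $E^Y$, while the $C^*$-basic construction of $E^C$ is $C_2$ with associated $C_2-C$-equivalence bimodule $C_1$, the same formula yields $Y_2=C_1\otimes_C Y\otimes_C\widetilde{C_1}$ as a $C_2-C_2$-equivalence bimodule (with $Y\subset Y_1$ identified with $Y$ via $\phi_Y$). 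Thus the two sides are carried by identical bimodules, and it remains only to show that $\phi_{Y_1}\colon Y_1\hookrightarrow Y_2$ coincides with the inclusion $\iota(a\otimes x\otimes\widetilde{a'})=\theta_C(a)\otimes x\otimes\widetilde{\theta_C(a')}$ of $C\otimes_A X\otimes_A\widetilde{C}$ into $C_1\otimes_C Y\otimes_C\widetilde{C_1}$ used in the definition of $g$ (see Lemma~\ref{lem:iso1} and Lemma~\ref{lem:bimodule}).

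The crucial point is that, because $A'\cap C=\BC1$ forces $\Ind_W(E^A)\in\BC1$, the quasi-basis $w_i=u_ie_A\Ind_W(E^A)^{\frac12}$ for $E^C$ is nothing but $w_i=\theta_C(u_i)$, so that $\phi_{Y_1}(y_1)=\sum_{i,j}\theta_C(u_i)\otimes E^Y(\theta_C(u_i)^*\cdot y_1\cdot\theta_C(u_j))\otimes\widetilde{\theta_C(u_j)}$. Evaluating on $y_1=a\otimes x\otimes\widetilde{a'}$: as an operator in $C_1=\BB_A(C)$ the element $\theta_C(u_i)^*$ carries $a$ to $\Ind_W(E^A)^{\frac12}E^A(u_i^*a)$, with the symmetric statement for the right action on $\widetilde{C}$, and combining this with $E^Y(c\otimes x\otimes\widetilde d)=\Ind_W(E^A)^{-1}c\cdot x\cdot d^*$ and the positivity of the scalar $\Ind_W(E^A)$ gives $E^Y(\theta_C(u_i)^*\cdot y_1\cdot\theta_C(u_j))=E^A(u_i^*a)\cdot x\cdot E^A(a'^*u_j)\in Y$. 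Moving the $A$-valued coefficients out into the outer tensor factors (using $e_Aa=ae_A$ for $a\in A$ and the $C_2-C$- and $C-C_2$-equivalence bimodule structures on $C_1$ and $\widetilde{C_1}$) and then applying the quasi-basis identity $\sum_iu_iE^A(u_i^*a)=a$ collapses the $i$-sum to $\Ind_W(E^A)^{\frac12}ae_A=\theta_C(a)$ and the $j$-sum to $\theta_C(a')$, so that $\phi_{Y_1}(a\otimes x\otimes\widetilde{a'})=\theta_C(a)\otimes x\otimes\widetilde{\theta_C(a')}=\iota(a\otimes x\otimes\widetilde{a'})$. Hence $\phi_{Y_1}=\iota$ on $Y_1$, and therefore $f_1(f([X,Y]))=g([X,Y])$.

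I expect the main difficulty to lie in the bookkeeping of this last computation: the left and right $C_1$-actions on $Y_1=C\otimes_A X\otimes_A\widetilde{C}$ pass through the $C_1-A$-equivalence bimodule $C$ and through adjoints in $C_1=\BB_A(C)$, whereas the target $Y_2=C_1\otimes_C Y\otimes_C\widetilde{C_1}$ is assembled from the $C_2-C$-equivalence bimodule structure on $C_1$ coming from the $C^*$-basic construction of $E^C$; keeping these straight, and confirming that the identifications of $Y_1$ and $Y_2$ with the bimodules in $g$ are exactly those prescribed by the definitions of the upward basic construction and of the strong Morita equivalence of $C_1\subset C_2$ with $A\subset C$ in \cite{KT4:morita}, is where the care is needed. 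Everything else is routine.
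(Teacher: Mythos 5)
Your proposal is correct and follows essentially the same route as the paper: both reduce the claim to checking that the embedding $\phi_{Y_1}$ of $Y_1=C\otimes_A X\otimes_A\widetilde{C}$ into $Y_2=C_1\otimes_C Y\otimes_C\widetilde{C_1}$ coincides with the embedding $c\otimes x\otimes\widetilde{d}\mapsto \Ind_W(E^A)\,ce_A\otimes x\otimes\widetilde{de_A}$ used in the definition of $g$, and both verify this by the same computation with the quasi-basis $w_i=\Ind_W(E^A)^{\frac12}u_ie_A$ for $E^C$, the formula for $E^Y$, and the quasi-basis identity $\sum_i u_iE^A(u_i^*c)=c$. The only difference is notational (your $\theta_C(u_i)$ versus the paper's explicit $\Ind_W(E^A)^{\frac12}u_ie_A$).
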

\begin{proof}Let $(X, Y)\in \Equi (A, C)$. By the definitions of $f$ and $f_1$,
$$
(f_1 \circ f)([X, Y])=[Y_1, Y_2 ],
$$
where $Y_1 = C\otimes_A X \otimes_A \widetilde{C}$ and $Y_2 =C_1 \otimes_C Y\otimes_C \widetilde{C_1}$.
Also,
$$
g([X, Y])=[C\otimes_A X\otimes_A \widetilde{C} \, , \, C_1\otimes_C Y \otimes_C \widetilde{C_1} ]
$$
by the definition of $g$. We note that $Y_1$ is regarded as a closed subspace $Y_2$ by
the linear map $\phi_{Y_1}$ from $Y_1$ to $Y_2$ defined by
$$
\phi_{Y_1} (c\otimes x\otimes\widetilde{d})
=\sum_{i, j}w_i \otimes E^Y (w_i^* \cdot c\otimes x\otimes\widetilde{d}\cdot w_j )
\otimes\widetilde{w_j}
$$
for any $c, d\in C$, $x\in X$, where $\{(w_i , w_i^* )\}$ is a quasi-basis for $E^C$ defined by
$w_i =\Ind_W (E^A )^{\frac{1}{2}}u_i e_A$. We also note that $C\otimes_A X\otimes_A \widetilde{C}$ is regarded 
as a closed subspace of $C_1\otimes_C Y \otimes_C\widetilde{C_1}$ by the linear map
$\theta_{C\otimes_A X\otimes_A \widetilde{C}}$
from $C\otimes_A X\otimes_A \widetilde{C}$ to $C_1\otimes_C Y \otimes_C\widetilde{C_1}$ defined by
$$
\theta_{C\otimes_A X\otimes_A\widetilde{C}}(c\otimes x\otimes\widetilde{d})
=\Ind_W (E^A )ce_A \otimes x \otimes\widetilde{de_A}
$$
for any $c, d\in C$, $x\in X$. In order to show that $f_1 \circ f =g$, we need to prove that
$$
\phi_{Y_1} (c\otimes x\otimes\widetilde{d})
=\theta_{C\otimes_A X \otimes_A \widetilde{C}}(c\otimes x\otimes\widetilde{d})
$$
for any $c, d\in C$, $x\in X$. For any $c, d\in C$, $x\in X$,
\begin{align*}
& \phi_{Y_1}(c\otimes x\otimes\widetilde{d}) \\
& =\Ind_W (E^A )^2 \sum_{i, j}u_i e_A \otimes E^Y (e_A u_i^* \cdot c\otimes x
\otimes\widetilde{d}\cdot u_j e_A )\otimes\widetilde{u_j e_A } \\
& =\Ind_W (E^A )^2 \sum_{i, j}u_i e_A \otimes E^Y (E^A (u_i^* c)\otimes x\otimes \widetilde{E^A (u_j^* d)})
\otimes\widetilde{u_j e_A } \\
& =\Ind_W (E^A )\sum_{i, j}u_i e_A \otimes E^A (u_i^* c)\cdot x\cdot E^A (d^* u_j )
\otimes\widetilde{u_j e_A } \\
& =\Ind_W (E^A )\sum_{i, j}u_i E^A (u_i^* c)e_A \otimes x\otimes [u_j E^A (u_j^* d)e_A ]^{\widetilde{}} \\
& =\Ind_W (E^A )ce_A \otimes x\otimes\widetilde{de_A} \\
& =\theta_{C\otimes_A X\otimes_A \widetilde{C}}(c\otimes x\otimes \widetilde{d})
\end{align*}
by the definition of $E^Y$ (See Preliminaries).
Therefore, we obtain the conclusion.
\end{proof}

By Lemmas \ref{lem:iso1}, \ref{lem:injective}, we can see that $(g^{-1}\circ f_1 )\circ f=\id$ on $\Pic(A, C)$.
Next, we shall show that
$$
f\circ (g^{-1}\circ f_1 )=\id
$$
on $\Pic(C, C_1 )$. Let $(Y, Y_1 )\in\Equi( C, C_1 )$. Then
$$
(g^{-1}\circ f_1 )([Y, Y_1 ])=g^{-1}([Y_1 , Y_2 ])=[\widetilde{C}\otimes_{C_1}Y_1 \otimes_{C_1} C \, , \,
\widetilde{C_1}\otimes_{C_2}Y_2 \otimes_{C_2}C_1 ] ,
$$
where $Y_2$ is the upward basic construction of $Y_1$ for $E^Y$. Thus
$Y_2 =C_1 \otimes_C Y\otimes_C \widetilde{C_1}$.
Hence
\begin{align*}
& (f\circ g^{-1}\circ f_1 )([Y, Y_1 ]) \\
&= [\widetilde{C_1}\otimes_{C_2}Y_2 \otimes_{C_2}C_1 \, , \,
C\otimes_A \widetilde{C}\otimes_{C_1} Y_1 \otimes_{C_1}C\otimes_A \widetilde{C} ] \\
& =[\widetilde{C_1}\otimes_{C_2}C_1\otimes_C Y\otimes_C \widetilde{C_1}\otimes_{C_2}C_1 \, , \,
C\otimes_A \widetilde{C}\otimes_{C_1} Y_1 \otimes_{C_1}C\otimes_A \widetilde{C} ] .
\end{align*}
By easy computations, $\widetilde{C_1}\otimes_{C_2}C_1\otimes_C Y\otimes_C \widetilde{C_1}\otimes_{C_2}C_1$
is isomorphic to $Y$ as $C-C$-equivalence bimodules by the linear map $\Phi_Y$ from 
$\widetilde{C_1}\otimes_{C_2}C_1\otimes_C Y\otimes_C \widetilde{C_1}\otimes_{C_2}C_1$
to $Y$ defined by
$$
\Phi_Y (\widetilde{c_1}\otimes y_2 \otimes d_1 )=E^C (c_1^* a_1 )\cdot y \cdot E^C (b_1^* d_1 )
$$
for any $a_1 , b_1 , c_1 , d_1 \in C_1$, $y\in Y$ and $y_2 =a_1 \otimes y\otimes \widetilde{b_1}$.
Also, 
$C\otimes_A \widetilde{C}\otimes_{C_1} Y_1 \otimes_{C_1}C\otimes_A \widetilde{C}$
is isomorphic to $Y_1$ as $C_1 -C_1$-equivalence bimodules by the linear map $\Phi_{Y_1}$ from
$C\otimes_A \widetilde{C}\otimes_{C_1} Y_1 \otimes_{C_1}C\otimes_A \widetilde{C}$
to $Y_1$ defined by
$$
\Phi_{Y_1}(c\otimes \widetilde{a}\otimes y_1 \otimes b\otimes\widetilde{d})
=ce_A a^* \cdot y_1 \cdot be_A d^*
$$
for any $a, b, c, d\in C$, $y_1 \in Y_1$. We note that
$\widetilde{C_1}\otimes_{C_2}C_1\otimes_C Y\otimes_C \widetilde{C_1}\otimes_{C_2}C_1$
is regarded as a closed subspace of
$C\otimes_A \widetilde{C}\otimes_{C_1} Y_1 \otimes_{C_1}C\otimes_A \widetilde{C}$ by the
linear map $\phi_{\widetilde{C_1}\otimes_{C_2}C_1\otimes_C Y\otimes_C \widetilde{C_1}\otimes_{C_2}C_1}$
from $\widetilde{C_1}\otimes_{C_2}C_1\otimes_C Y\otimes_C \widetilde{C_1}\otimes_{C_2}C_1$
to $C\otimes_A \widetilde{C}\otimes_{C_1} Y_1 \otimes_{C_1}C\otimes_A \widetilde{C}$
defined by
\begin{align*}
& \phi_{\widetilde{C_1}\otimes_{C_2}C_1\otimes_C Y\otimes_C \widetilde{C_1}\otimes_{C_2}C_1}
(\widetilde{c_1}\otimes a_1 \otimes y\otimes \widetilde{b_1}\otimes d_1 ) \\
& =\sum_{i, j}u_i \otimes E^{\widetilde{C}\otimes_{C_1} Y_1 \otimes_{C_1}C}
(u_i^* \cdot \widetilde{c_1}\otimes a_1 \otimes y\otimes\widetilde{b_1}\otimes d_1 \cdot u_j )\otimes \widetilde{u_j}
\end{align*}
for any $a_1 , b_1 , c_1 , d_1 \in C_1$, $y\in Y$, where $E^{\widetilde{C}\otimes_{C_1} Y_1 \otimes_{C_1}C}$
is the unique conditional expectation from
$\widetilde{C_1}\otimes_{C_2}C_1\otimes_C Y\otimes_C \widetilde{C_1}\otimes_{C_2}C_1$ onto
$\widetilde{C}\otimes_{C_1} Y_1 \otimes_{C_1}C$ with respect to $E^A$ and $E^A$, which
satisfies Conditions (1)-(6) in Definition \ref {def:pre6} by Lemma \ref{lem:expectation}.

\begin{lemma}\label{lem:expectation2}With the above notation,
$$
E^{\widetilde{C}\otimes_{C_1} Y_1 \otimes_{C_1}C}
(\widetilde{c_1}\otimes a_1 \otimes y \otimes \widetilde{b_1}\otimes d_1 )
=\widetilde{E^C (a_1^* c_1)}\otimes y\otimes E^C (b_1^* d_1 )
$$
for any $a_1 , b_1 , c_1 , d_1 \in C_1$, $y\in Y$.
\end{lemma}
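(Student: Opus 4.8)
The plan is to argue as in the proof of Lemma \ref{lem:prepare3}. Since $A' \cap C=\BC 1$, Lemma \ref{lem:expectation} shows that $E^{\widetilde{C}\otimes_{C_1} Y_1 \otimes_{C_1}C}$ is the \emph{unique} conditional expectation from the $C-C$-equivalence bimodule $\widetilde{C_1}\otimes_{C_2}C_1\otimes_C Y\otimes_C \widetilde{C_1}\otimes_{C_2}C_1$ onto its $A-A$-equivalence subbimodule $\widetilde{C}\otimes_{C_1} Y_1 \otimes_{C_1}C$ with respect to $E^A$ and $E^A$. Hence it suffices to show that
$$
F(\widetilde{c_1}\otimes a_1 \otimes y \otimes\widetilde{b_1}\otimes d_1 )=\widetilde{E^C (a_1^* c_1 )}\otimes y\otimes E^C (b_1^* d_1 )
$$
(where, as usual, $y$ on the right-hand side stands for $\phi_Y (y)\in Y_1$) extends to a well-defined linear map $F$ onto $\widetilde{C}\otimes_{C_1} Y_1 \otimes_{C_1}C$ satisfying Conditions (1)--(6) of Definition \ref{def:pre6}, with $E^A$ playing the role of both $E^A$ and $E^B$ there; then $F=E^{\widetilde{C}\otimes_{C_1} Y_1 \otimes_{C_1}C}$ and the lemma follows.

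First I would check that $F$ is well defined. Here one uses that the right $C$-valued inner product on the $C_2-C$-equivalence bimodule $C_1$ is $\la a_1 , c_1 \ra_C =E^C (a_1^* c_1 )$, that the left $C$-valued inner product on $\widetilde{C_1}$ is given by the same formula, and the module identities for $E^C$ (such as $E^C (a_1^* c_1 c)=E^C (a_1^* c_1 )c$ for $c\in C$); these make the assignment compatible with the relations defining the $C_2$-balanced and $C$-balanced tensor products. Since $E^C (a_1^* c_1 ), E^C (b_1^* d_1 )\in C$, the right-hand side indeed lies in $\widetilde{C}\otimes_{C_1} Y_1 \otimes_{C_1}C$ once $Y$ is identified with $\phi_Y (Y)\subset Y_1$.

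Next I would verify Conditions (1)--(6) of Definition \ref{def:pre6} in turn. Conditions (2) and (5) (left and right $A$-linearity of $F$) and Conditions (1) and (4) ($F(c\cdot \eta)=E^A (c)\cdot \eta$ and $F(\eta\cdot c)=\eta\cdot E^A (c)$ for $\eta$ in the subbimodule) come out of how the left and right $C$-actions on $\widetilde{C_1}\otimes_{C_2}C_1\otimes_C Y\otimes_C \widetilde{C_1}\otimes_{C_2}C_1$ act on the outer $\widetilde{C_1}$- and $C_1$-legs, together with $C$-bilinearity of the conditional expectation $E^C$ and the facts that $\la\theta_C (a) , \theta_C (a') \ra_C =E^A (a^* a')$ and that $E^A$ is a conditional expectation; in particular Condition (1) with $c=1$ gives $F|_{\widetilde{C}\otimes_{C_1} Y_1 \otimes_{C_1}C}=\id$. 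Conditions (3) and (6), relating $E^A$ of the $C$-valued inner products on the big bimodule to the $A$-valued (equivalently $C$-valued) inner products on the subbimodule, are the delicate ones: one expands the relevant inner product of a general $\widetilde{c_1}\otimes a_1 \otimes y \otimes\widetilde{b_1}\otimes d_1$ with an element of the subbimodule by successively peeling off the inner products in $C_1$, in $Y$ and in $\widetilde{C_1}$, applies $E^A$, and matches the result against the inner product computed from the right-hand side formula, using the defining formula of $E^C$ as the dual conditional expectation of $E^A$, the formula for $\phi_Y$, and the fact that $E^X$, the conditional expectation from $Y$ onto $X$, satisfies Conditions (3) and (6) with respect to $E^A$ (Lemma \ref{lem:expectation}).

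The only genuine obstacle is bookkeeping: a typical element $\widetilde{c_1}\otimes a_1 \otimes y \otimes\widetilde{b_1}\otimes d_1$ carries five tensor legs over three distinct algebras, so one must keep careful track of which inner product or module action applies to which leg and in which order, and of the scalar $\Ind_W (E^A )\in\BC 1$ produced by $E^C$ and $\theta_C$. Once the expansions are organised correctly, each of the six conditions reduces to the defining properties of $E^A$, $E^C$, $E^X$, $\theta_C$ and $\phi_Y$, and the remaining manipulations are routine.
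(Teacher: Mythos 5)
Your strategy is sound, but it is genuinely different from the paper's. You characterize $E^{\widetilde{C}\otimes_{C_1}Y_1\otimes_{C_1}C}$ via the uniqueness statement of Lemma \ref{lem:expectation} and then propose to verify that the candidate formula is well defined on the balanced tensor products and satisfies Conditions (1)--(6) of Definition \ref{def:pre6} -- exactly the technique the paper uses to prove Lemma \ref{lem:prepare3}, now transplanted to the five-fold tensor product. The paper instead takes Lemma \ref{lem:prepare3} as already established and computes directly: writing $y_2=a_1\otimes y\otimes\widetilde{b_1}$, it expands
$$
E^{\widetilde{C}\otimes_{C_1}Y_1\otimes_{C_1}C}(\widetilde{c_1}\otimes y_2\otimes d_1)=\sum_{i,j}F^{\widetilde{C}}(\widetilde{c_1}\cdot r_i)\otimes E^{Y_1}(r_i^*\cdot y_2\cdot r_j)\otimes F^C(r_j^*\cdot d_1)
$$
using the explicit quasi-basis $r_i=\Ind_W(E^A)^{1/2}w_ie_C$ for $E^{C_1}$ and the explicit bimodule expectation $F^C(ce_Cd)=cE^C(d)e_C$, and then simplifies with the quasi-basis identities and the normalization $F^C(E^C(a_1^*c_1))=\Ind_W(E^A)^{-1/2}E^C(a_1^*c_1)$ coming from the embedding $\theta_C$. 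The paper's route buys economy: well-definedness and the six axioms come for free because one starts from the known expectation and merely evaluates it, and the only work is tracking the powers of $\Ind_W(E^A)$. Your route buys conceptual transparency (the answer is forced by uniqueness) at the cost of re-verifying well-definedness and all six conditions for a five-leg tensor over three algebras; as you note, Conditions (3) and (6) are where the real bookkeeping lives, and your proof is complete only once those expansions are actually carried out. Both arguments are valid under the standing hypothesis $A'\cap C=\BC 1$, which is what makes the uniqueness in Lemma \ref{lem:expectation} available.
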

\begin{proof}Let $y_2 =a_1 \otimes y\otimes\widetilde{b_1}$. Let $\{(r_i , r_i^* )\}$
be the quasi-basis for $E^{C_1}$, the dual conditional expectation of $E^C$ from $C_2$
onto $C_1$, which is defined by $r_i =\Ind_W (E^A )^{\frac{1}{2}}w_i e_C$,
where $e_C$ is the Jones projection for $E^C$. Let $F^C$ be the conditional expectation from
$C_1$, the $C_2 -C$-equivalence bimodule onto $C$, the $C_1 -A$-equivalence bimodule with
respect to $E^{C_1}$ and $E^A$, which is defined by
$$
F^C (ce_C d)=cE^C (d)e_C
$$
for any $c, d\in C$. Also, let $F^{\widetilde{C}}$ be the conditional expectation from $\widetilde{C_1}$
onto $\widetilde{C}$ induced by $F^C$. Then by Lemmas \ref{lem:conjugate}, \ref{lem:prepare3}, and
the definition of $E^{Y_1}$ (See Preliminaries),
\begin{align*}
& E^{\widetilde{C}\otimes_{C_1}Y_1 \otimes _{C_1}C}(\widetilde{c_1}\otimes y_2 \otimes d_1 ) \\
& =\sum_{i, j}F^{\widetilde{C}}(\widetilde{c_1}\cdot r_i )\otimes E^{Y_1}(r_i^* \cdot y_2 \cdot r_j )
\otimes F^C (r_j^* \cdot d_1 ) \\
& =\Ind_W (E^A )^2 \sum_{i, j}F^{\widetilde{C}}(\widetilde{E^C (w_i^* c_1 )})\otimes E^{Y_1}(E^C (w_i^* a_1 )
\otimes y \otimes \widetilde{E^C (w_j^* b_1 )}) \\
& \otimes F^C (E^C (w_j^* d_1 )) \\
& =\Ind_W (E^A )\sum_{i, j}[F^C (E^C (w_i^* c_1 ))]^{\widetilde{}}\otimes E^C (w_i^* a_1 )\cdot y \cdot
E^C (w_j^* b_1 )^* \\
& \otimes F^C (E^C (w_j^* d_1 )) \\
& =\Ind_W (E^A)\sum_{i, j}[F^C (E^C (a_1^* w_i E^C (w_i^* c_1 )))]^{\widetilde{}}\otimes y \\
& \otimes F^C (E^C (E^C (b_1^* w_j )w_j^* d_1 ))\\
& =\Ind_W (E^A )[F^C (E^C (a_1^* c_1 ))]^{\widetilde{}}\otimes y\otimes F^C (E^C (b_1^* d_1 )) .
\end{align*}
Here since we regard $E^C (a_1^* c_1 )$ as an element in the $C_2 -C$-equivalence bimodule $C_1$,
\begin{align*}
F^C (E^C (a_1^* c_1 )) & =\sum_i F^C (E^C (a_1^* c_1 )w_i e_C w_i^* ) \\
& =\sum_i E^C (a_1^* c_1 )w_i E^C (w_i^* )e_C =E^C (a_1^* c_1 )e_C .
\end{align*}
Since we regard the element $E^C (a_1^* c_1 )e_C$ in the $C_2 -C$-equivalence bimodule $C_1$
as the element $\Ind_W (E^A )^{-\frac{1}{2}}E^C (a_1^* c_1 )$ in $C$, the $C_1 -A$-equivalence bimodule
by \cite [Section 4]{KT4:morita}, 
$$
F^C (E^C (a_1^* c_1 ))=\Ind_W (E^A )^{-\frac{1}{2}}E^C (a_1^* c_1 ) .
$$
Similarly, $F^C (E^C (b_1^* d_1 ))=\Ind_W (A)^{-\frac{1}{2}}E^C (b_1^* d_1 )$. Therefore, we obtain the
conclusion.
\end{proof}

\begin{lemma}\label{lem:surjective}With the above notation,
$f\circ g^{-1}\circ f_1 =\id$ on $\Pic(C, C_1 )$.
\end{lemma}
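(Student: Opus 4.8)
The plan is to realize $(f\circ g^{-1}\circ f_1)([Y,Y_1])$ and $[Y,Y_1]$ as the same class in $\Pic(C,C_1)$ by means of the two explicit isomorphisms $\Phi_{Y_1}$ and $\Phi_Y$ introduced before the statement. Recall from the computation preceding the lemma that, for $(Y,Y_1)\in\Equi(C,C_1)$,
$$
(f\circ g^{-1}\circ f_1)([Y,Y_1])=[\widetilde{C_1}\otimes_{C_2}C_1\otimes_C Y\otimes_C \widetilde{C_1}\otimes_{C_2}C_1 \, , \, C\otimes_A \widetilde{C}\otimes_{C_1}Y_1\otimes_{C_1}C\otimes_A \widetilde{C}] .
$$
By Lemma \ref{lem:simple} and the remark following it, to prove that this equals $[Y,Y_1]$ it suffices to exhibit a $C_1-C_1$-equivalence bimodule isomorphism of $C\otimes_A \widetilde{C}\otimes_{C_1}Y_1\otimes_{C_1}C\otimes_A \widetilde{C}$ onto $Y_1$ which carries the closed subspace $\widetilde{C_1}\otimes_{C_2}C_1\otimes_C Y\otimes_C \widetilde{C_1}\otimes_{C_2}C_1$ (embedded via the map $\phi'$ described above, where I write $\phi'$ for $\phi_{\widetilde{C_1}\otimes_{C_2}C_1\otimes_C Y\otimes_C \widetilde{C_1}\otimes_{C_2}C_1}$) onto $Y$ (embedded via $\phi_Y$). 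I would take this isomorphism to be $\Phi_{Y_1}$, which is already known to be a $C_1-C_1$-equivalence bimodule isomorphism; only its compatibility with the two embeddings needs to be checked.

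Thus the heart of the proof is the identity $\Phi_{Y_1}\circ\phi'=\phi_Y\circ\Phi_Y$, which I would verify on a generator $\widetilde{c_1}\otimes a_1\otimes y\otimes\widetilde{b_1}\otimes d_1$. For the left-hand side, using $u_i^*\cdot\widetilde{c_1}=\widetilde{c_1 u_i}$, $d_1\cdot u_j=d_1 u_j$ and Lemma \ref{lem:expectation2} one gets
$$
\phi'(\widetilde{c_1}\otimes a_1\otimes y\otimes\widetilde{b_1}\otimes d_1)=\sum_{i,j}u_i\otimes\widetilde{E^C(a_1^* c_1 u_i)}\otimes y\otimes E^C(b_1^* d_1 u_j)\otimes\widetilde{u_j} ,
$$
with $y$ read as $\phi_Y(y)\in Y_1$ in the middle factor. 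Applying $\Phi_{Y_1}$, rewriting $E^C(a_1^* c_1 u_i)^*=u_i^* E^C(c_1^* a_1)$ and $E^C(b_1^* d_1 u_j)=E^C(b_1^* d_1)u_j$ by the $C$-bimodule property of $E^C$, using the description of the left and right $C_1$-actions on $Y_1=C\otimes_A X\otimes_A\widetilde{C}$ through operators of the form $ce_A d$, and finally the quasi-basis identities $\sum_i u_i E^A(u_i^* c)=c=\sum_i E^A(cu_i)u_i^*$, the $i$-summation collapses the left operators to the left action of $E^C(c_1^* a_1)\in C$ and the $j$-summation collapses the right operators to the right action of $E^C(b_1^* d_1)\in C$, so that the left-hand side becomes $E^C(c_1^* a_1)\cdot\phi_Y(y)\cdot E^C(b_1^* d_1)$. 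For the right-hand side, by the definition of $\Phi_Y$ and since $\phi_Y$ is a $C-C$-bimodule map,
$$
\phi_Y(\Phi_Y(\widetilde{c_1}\otimes a_1\otimes y\otimes\widetilde{b_1}\otimes d_1))=\phi_Y(E^C(c_1^* a_1)\cdot y\cdot E^C(b_1^* d_1))=E^C(c_1^* a_1)\cdot\phi_Y(y)\cdot E^C(b_1^* d_1) ,
$$
which is the same element.

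Granting this identity, $\Phi_{Y_1}$ maps $\widetilde{C_1}\otimes_{C_2}C_1\otimes_C Y\otimes_C \widetilde{C_1}\otimes_{C_2}C_1$ bijectively onto $\phi_Y(Y)$, since $\Phi_Y$ is a bijection onto $Y$ and $\phi_Y$ is injective; by Lemma \ref{lem:simple} the restriction of $\Phi_{Y_1}$ is then a $C-C$-equivalence bimodule isomorphism onto $Y$, so $[\widetilde{C_1}\otimes_{C_2}C_1\otimes_C Y\otimes_C \widetilde{C_1}\otimes_{C_2}C_1 \, , \, C\otimes_A \widetilde{C}\otimes_{C_1}Y_1\otimes_{C_1}C\otimes_A \widetilde{C}]=[Y,Y_1]$, i.e. $f\circ g^{-1}\circ f_1=\id$ on $\Pic(C,C_1)$; combined with $(g^{-1}\circ f_1)\circ f=\id$ this finishes the proof that $f$ is an isomorphism. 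I expect the only real difficulty to be the bookkeeping in the displayed identity — keeping straight the $C_1-A$- and $A-C_1$-bimodule structures of $C$ and $\widetilde{C}$ sitting inside $Y_1$, the action of $C_1$ by operators $ce_A d$, and the conjugation conventions in the dual bimodules — because a single misplaced adjoint would stop the quasi-basis sums from telescoping; there is no conceptual obstacle beyond this.
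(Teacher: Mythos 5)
Your argument is correct and is essentially the paper's own proof: both reduce the lemma to the single identity $\Phi_{Y_1}\circ\phi_{\widetilde{C_1}\otimes_{C_2}C_1\otimes_C Y\otimes_C\widetilde{C_1}\otimes_{C_2}C_1}=\Phi_Y$ (you merely write the inclusion $Y\hookrightarrow Y_1$ explicitly as $\phi_Y$) and verify it on generators via Lemma \ref{lem:expectation2}, the $C$-bimodularity of $E^C$, and the telescoping identity $\sum_i u_i e_A u_i^*=1$ coming from the quasi-basis. The only slip is cosmetic: here $(Y,Y_1)$ is an arbitrary element of $\Equi(C,C_1)$, so $Y_1$ need not be of the form $C\otimes_A X\otimes_A\widetilde{C}$, but your computation uses only the $C_1$-actions by elements $ce_A d$, which is all that is required.
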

\begin{proof}Let $\Phi_Y$ be the isomorphism of
$\widetilde{C_1}\otimes_{C_2}C_1 \otimes_C Y\otimes_C  \widetilde{C_1}\otimes_{C_2} C_1$
onto $Y$ and $\Phi_{Y_1}$ the isomorphism of
$C\otimes_A \widetilde{C}\otimes_{C_1} Y_1 \otimes _{C_1}C\otimes_A \widetilde{C}$
onto $Y_1$ defined in the above. Also, let $\phi_{\widetilde{C_1}\otimes_{C_2}C_1 \otimes_C Y \otimes_C
\widetilde{C_1}\otimes_{C_2}C_1}$ be the injective linear map from
$\widetilde{C_1}\otimes_{C_2}C_1 \otimes_C Y\otimes_C  \widetilde{C_1}\otimes_{C_2} C_1$ into
$C\otimes_A \widetilde{C}\otimes_{C_1} Y_1 \otimes _{C_1}C\otimes_A \widetilde{C}$
defined in the above. It suffices to show that
$$
\Phi_Y =\Phi_{Y_1} \circ \phi_{\widetilde{C_1}\otimes_{C_2}C_1 \otimes_C Y \otimes_C
\widetilde{C_1}\otimes_{C_2}C_1}
$$
in order to prove that $f\circ g^{-1}\circ f_1 =\id$ on $\Pic(C, C_1 )$.
Let $a_1 , b_1 , c_1 , d_1 \in C_1$, $y\in Y$ and let $y_2 =a_1 \otimes y \otimes \widetilde{b_1}$.
Then by Lemma \ref{lem:expectation2},
\begin{align*}
& \phi_{\widetilde{C_1}\otimes_{C_2}C_1 \otimes_C Y \otimes_C
\widetilde{C_1}\otimes_{C_2}C_1}(\widetilde{c_1}\otimes y_2 \otimes d_ 1 ) \\
& =\sum_{i, j}u_i \otimes E^{\widetilde{C}\otimes_{C_1}Y_1 \otimes _{C_1} C}
(u_i^* \cdot \widetilde{c_1}\otimes a_1 \otimes y\otimes\widetilde{b_1}\otimes d_1 \cdot u_j )\otimes\widetilde{u_j} \\
& =\sum_{i, j}u_i \otimes E^{\widetilde{C}\otimes_{C_1}Y_1 \otimes _{C_1} C}
(\widetilde{c_1 u_i}\otimes a_1 \otimes y\otimes\widetilde{b_1}\otimes d_1 u_j )\otimes \widetilde{u_j} \\
& =\sum_{i, j}u_i \otimes [E^C (a_1^* c_1 u_i )]^{\widetilde{}}
\otimes y\otimes E^C (b_1^* d_1 u_j )\otimes \widetilde{u_j} .
\end{align*}
Hence
\begin{align*}
& (\Phi_{Y_1}\circ \phi_{\widetilde{C_1}\otimes_{C_2}C_1 \otimes_C Y \otimes_C
\widetilde{C_1}\otimes_{C_2}C_1})(\widetilde{c_1}\otimes y_2 \otimes d_1 ) \\
& =\sum_{i, j}u_i e_A E^C (u_i^* c_1^* a_1 )\cdot y\cdot E^C (b_1^* d_1u_j )e_A u_j^* \\
& =\sum_{i, j}u_i e_A u_i^* E^C (c_1^* a_1 )\cdot y\cdot E^C (b_1^* d_1 )u_j e_A u_j^* \\
& =E^C (c_1^* a_1 )\cdot y\cdot E^C (b_1^* d_1 ) .
\end{align*}
On the other hand,
$$
\Phi_Y (\widetilde{c_1}\otimes y_2 \otimes d_1 )=E^C (c_1^* a_1 )\cdot y\cdot E^C (b_1^* d_1 ) .
$$
Therefore, we obtain the conclusion.
\end{proof}

\begin{thm}\label{thm:bijective}Let $A\subset C$ be a unital inclusion of unital $C^*$-algebras.
We suppose that $A' \cap C=\BC1$ and that there is a conditional expectation $E^A$ of
Watatani index-finite type from $C$ onto $A$. Let $C_1$ be the $C^*$-basic construction for $E^A$.
Then $\Pic(A, C)\cong \Pic(C, C_1 )$.
\end{thm}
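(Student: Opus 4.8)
The plan is to show that the map $f\colon\Pic(A,C)\to\Pic(C,C_1)$, $f([X,Y])=[Y,Y_1]$, defined above is a group isomorphism, by exhibiting an explicit two-sided inverse for it. First I would record that $f$ is well defined: by Lemma \ref{lem:expectation} the conditional expectation $E^X$, hence the upward basic construction $Y_1$, is uniquely determined and independent of the chosen quasi-basis for $E^A$; by Theorem \ref{thm:pre8} the pair $(Y,Y_1)$ lies in $\Equi(C,C_1)$; and $f$ respects the relation $\sim$ because an isomorphism of $Y$ onto $W$ carrying $X$ onto $Z$ induces, through the formation of the upward basic construction, an isomorphism of $Y_1$ onto $W_1$ carrying $Y$ onto $W$. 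By Lemma \ref{lem:product}, $f$ is a homomorphism.

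Next I would assemble the candidate inverse $g^{-1}\circ f_1$. Here $g\colon\Pic(A,C)\to\Pic(C_1,C_2)$ is the isomorphism of Lemma \ref{lem:iso1} attached to the strong Morita equivalence of $A\subset C$ and $C_1\subset C_2$ implemented by the $C_2-C$-equivalence bimodule $C_1$ and its closed subspace $C$ (embedded via $\theta_C$), while $f_1\colon\Pic(C,C_1)\to\Pic(C_1,C_2)$, $f_1([Y,Y_1])=[Y_1,Y_2]$, is the homomorphism defined above, built from the upward basic construction of $Y_1$ for $E^Y$ with respect to $E^C$. Hence $g^{-1}\circ f_1\colon\Pic(C,C_1)\to\Pic(A,C)$ is a group homomorphism.

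It then remains only to verify that $g^{-1}\circ f_1$ is inverse to $f$ on both sides. Lemma \ref{lem:injective} gives $f_1\circ f=g$ on $\Pic(A,C)$, so $(g^{-1}\circ f_1)\circ f=\id$, whence $f$ is injective; Lemma \ref{lem:surjective} gives $f\circ(g^{-1}\circ f_1)=\id$ on $\Pic(C,C_1)$, whence $f$ is surjective. A bijective homomorphism of groups is an isomorphism, and therefore $\Pic(A,C)\cong\Pic(C,C_1)$.

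The substantive work behind this argument has already been carried out in Lemmas \ref{lem:injective}, \ref{lem:expectation2} and \ref{lem:surjective}: one must identify the iterated relative tensor products $C\otimes_A X\otimes_A\widetilde{C}$ and $C_1\otimes_C Y\otimes_C\widetilde{C_1}$ with the appropriate upward basic constructions, match the quasi-basis embeddings $\phi$ with the Jones-projection embeddings $\theta$, and use the relation $w_i=\Ind_W(E^A)^{1/2}u_ie_A$ between quasi-bases at successive stages of the tower together with the defining formula for $E^Y$. Given those compatibilities, the theorem follows formally from the existence of a two-sided inverse for $f$; the main obstacle is thus bookkeeping rather than conceptual difficulty, namely keeping the many tensor-factor identifications straight and confirming that the maps $\Phi_Y$ and $\Phi_{Y_1}$ of Lemma \ref{lem:surjective} are genuine equivalence-bimodule isomorphisms.
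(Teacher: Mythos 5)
Your argument is correct and is essentially the paper's own proof: the theorem is deduced from Lemma \ref{lem:product} (homomorphism property) together with Lemma \ref{lem:injective} ($f_1\circ f=g$, giving a left inverse $g^{-1}\circ f_1$) and Lemma \ref{lem:surjective} ($f\circ g^{-1}\circ f_1=\id$, giving the right inverse). Your additional remarks on well-definedness and on the bookkeeping behind those lemmas are accurate but not a departure from the paper's route.
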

\begin{proof}This is immediate by Lemmas \ref{lem:injective}, \ref{lem:surjective}.
\end{proof}

\section{The Picard groups for inclusions of $C^*$-algebras and the ordinary Picard groups}
\label{sec:relation}
In this section, we shall investigate the relation between the Picard groups
for inclusion of $C^*$-algebras and the ordinary Picard groups. Let $A\subset C$ be an inclusion of
$C^*$-algebras with $\overline{AC}=C$ and let $f_C$ be the homomorphism from $\Pic(A, C)$ to $\Pic(C)$
defined by
$$
f_C : \Pic(A, C)\rightarrow\Pic(C): [X, Y]\mapsto [Y] ,
$$
where $\Pic(C)$ is the ordinary Picard group of $C$.
In this section, we suppose that $\overline{AC}=C$ for any inclusions of $C^*$-algebras
$A\subset C$. 

Let $u$ be a unitary element in $M(C)$ satisfying that $uau^*$, $u^* a u\in A$ for any
$a\in A$. We regard $Au$ as an $A-A$-equivalence bimodule as follows:
In the usual way, we regard $Au$ as a vector space over $\BC$. We define
the left $A$-action and the right action by
$$
a\cdot xu=axu, \quad xu\cdot a =xua =x(uau^* )u
$$
for any $a, x\in A$. We also define the left $A$-valued inner product and
the right $A$-valued inner product by
$$
{}_A \la xu, yu \ra =xuu^* y^* =xy^* , \quad \la xu, yu \ra_A =u^* x^* yu
$$
for any $x, y\in A$. Furthermore, $Au$ is a closed subspace of $C$,
the trivial $C-C$-equivalence bimodule and we can
see that $[Au, C]$ is an element in $\Pic(A, C)$ by easy computations.
Let $\Aut (A, C)$ be the group of all automorphisms of $C$ such that $\alpha|_A$ is
an automorphism of $A$. Let $\alpha\in \Aut (A, C)$ and let $[X_{\alpha}, Y_{\alpha}]$
be the element in $\Pic(A, C)$ induced by $\alpha$, which is defined in Section \ref{sec:definition}

\begin{lemma}\label{lem:kernel}With the above notation, let $\alpha\in \Aut (A, C)$. Then the following conditions are
equivalent:
\newline
$(1)$ $[X_{\alpha}, Y_{\alpha}]\in\Ker f_C$,
\newline
$(2)$ There is a unitary element $u\in M(C)$ satisfying that
$uau^*$, $u^* au\in A$ for any $a\in A$ and that
$[X_{\alpha}, Y_{\alpha}]=[Au , C]$ in $\Pic(A, C)$.
\end{lemma}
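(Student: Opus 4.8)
The plan is to treat the two implications separately; $(2)\Rightarrow(1)$ is immediate, while $(1)\Rightarrow(2)$ is a mild variant of the argument already carried out in the proof of Lemma \ref{lem:exact1}.

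For $(2)\Rightarrow(1)$: if there is a unitary $u\in M(C)$ with $uau^{*},u^{*}au\in A$ for all $a\in A$ and $[X_{\alpha},Y_{\alpha}]=[Au,C]$ in $\Pic(A,C)$, then, applying the homomorphism $f_{C}$ and using that $Au$ is a closed subspace of the trivial $C-C$-equivalence bimodule $C$, we get $f_{C}([X_{\alpha},Y_{\alpha}])=f_{C}([Au,C])=[C]$, the unit element of $\Pic(C)$. Hence $[X_{\alpha},Y_{\alpha}]\in\Ker f_{C}$.

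For $(1)\Rightarrow(2)$: the condition $[X_{\alpha},Y_{\alpha}]\in\Ker f_{C}$ says precisely that $[Y_{\alpha}]=[C]$ in $\Pic(C)$, i.e.\ there is a $C-C$-equivalence bimodule isomorphism $\Phi$ of $C$ onto $Y_{\alpha}$. I would then run the double-centralizer computation from the proof of Lemma \ref{lem:exact1} (following \cite[Proposition 3.1]{BGR:linking}), which uses only this $C-C$-bimodule isomorphism and no information about $X_{\alpha}$, to obtain a unitary $u_{1}\in M(C)$ with $\alpha=\Ad(u_{1}^{*})$; set $u=u_{1}^{*}$, so that $\alpha=\Ad(u)$. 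Since $\alpha\in\Aut(A,C)$, the restriction $\alpha|_{A}$ is an automorphism of $A$, so $\alpha(A)=A=\alpha^{-1}(A)$, which gives $uau^{*}=\alpha(a)\in A$ and $u^{*}au=\alpha^{-1}(a)\in A$ for every $a\in A$; thus $u$ satisfies the constraint in $(2)$. Finally, I would define $\Psi\colon Y_{\alpha}\to C$ (the trivial $C-C$-equivalence bimodule) by $\Psi(y)=yu$; this is the inverse of the isomorphism $x\mapsto xu^{*}$ appearing in the proof of Lemma \ref{lem:exact1}, hence a $C-C$-equivalence bimodule isomorphism, and it carries $X_{\alpha}$ (the copy of $A$ sitting inside $Y_{\alpha}$) onto $Au$. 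By the characterization of $\sim$ recorded just after Lemma \ref{lem:simple} --- namely $(X,Y)\sim(Z,W)$ if and only if there is a $C-C$-equivalence bimodule isomorphism of $Y$ onto $W$ taking $X$ onto $Z$ --- this yields $[X_{\alpha},Y_{\alpha}]=[Au,C]$ in $\Pic(A,C)$, which completes the proof.

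The only step requiring any care is the extraction of the unitary $u_{1}$ from the bare isomorphism $C\cong Y_{\alpha}$, but this is a verbatim reuse of the computation in Lemma \ref{lem:exact1}; the genuinely new observation is simply that membership of $\alpha$ in $\Aut(A,C)$ already forces $u$ to normalize $A$ in both directions, after which the identification $[X_{\alpha},Y_{\alpha}]=[Au,C]$ reduces to matching the left and right actions and the two inner products, a routine check of the same kind as in the proof of Lemma \ref{lem:exact1}.
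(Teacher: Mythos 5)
Your proposal is correct and follows essentially the same route as the paper: the reverse implication is immediate from the definition of $f_C$, and for the forward implication the paper likewise extracts a unitary $u\in M(C)$ with $\alpha=\Ad(u)$ from $[Y_\alpha]=[C]$ via the argument of Lemma \ref{lem:exact1}, notes that $\alpha|_A\in\Aut(A)$ forces $uau^*, u^*au\in A$, and then matches $Au$ with $X_\alpha$ through the bimodule isomorphism $x\mapsto xu^*$ (the inverse of your $\Psi$). The only cosmetic difference is the direction in which the final isomorphism is written.
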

\begin{proof}$(1)\Rightarrow (2)$: We suppose that $[X_{\alpha}, Y_{\alpha}]\in\Ker f_C$.
Then $[Y_{\alpha}]=[C]$ in $\Pic(C)$. Hence there is a unitary element $u\in M(C)$
such that $\alpha=\Ad(u)$ on $C$. Since $\alpha|_A$ is also an automorphism of $A$, we can
see that $uau^*$, $u^* au\in A$ for any $a\in A$. Let $\Phi$ be the linear map from $C$ to
$Y_{\Ad(u)}$ defined by $\Phi(x)=xu^*$ for any $x\in C$. Then by the proof of Lemma \ref{lem:exact1},
$\Phi$ is a $C-C$-equivalence bimodule isomorphism of $C$ onto $Y_{\Ad(u)}$.
Also, we can see that $\Phi|_{Au}$ is an $A-A$-equivalence bimodule isomorphism
of $Au$ onto $X_{\alpha}$. Indeed, for any $x\in A$,
$\Phi(xu)=xuu^* =x\in X_{\alpha}$. By Lemma \ref{lem:simple} $\Phi|_{Au}$ is an
$A-A$-equivalence bimodule isomorphism of $Au$ onto $X_{\alpha}$.
\newline
$(2)\Rightarrow (1)$: It is clear by the definition of $f_C$.
\end{proof}

Let $K(M(C))$ be the set of all unitary elements $u$ in $M(C)$ such that
$uau^*$, $u^* au\in A$ for any $a\in A$. Then $K(M(C))$ is a subgroup of
the group of all unitary elements in $M(C)$.

\begin{lemma}\label{lem:product2}With the above notation, for any $u, v\in K(M(C))$,
$$
[Au, C][Av, C]=[Auv, C]
$$
in $\Ker f_C$.
\end{lemma}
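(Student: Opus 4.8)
The plan is to unwind the definition of the product in $\Pic(A,C)$ and recognise the resulting class directly. By definition of the product,
$$
[Au, C][Av, C]=[Au\otimes_A Av \, , \, C\otimes_C C],
$$
where the $A-A$-equivalence bimodule $Au\otimes_A Av$ is identified, via Lemma \ref{lem:bimodule}, with the closed subspace $Au\otimes_C Av$ of $C\otimes_C C$. Note that $uv\in K(M(C))$ since $K(M(C))$ is a subgroup of the unitary group of $M(C)$, so $[Auv, C]$ is a legitimate element of $\Pic(A,C)$. Thus it suffices to exhibit a $C-C$-equivalence bimodule isomorphism of $C\otimes_C C$ onto the trivial bimodule $C$ which carries $Au\otimes_C Av$ bijectively onto $Auv$; Lemma \ref{lem:simple} then upgrades that restriction automatically to an $A-A$-equivalence bimodule isomorphism, giving $[Au\otimes_A Av \, , \, C\otimes_C C]=[Auv, C]$ and hence the claim.

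First I would take the standard $C-C$-equivalence bimodule isomorphism $\Psi\colon C\otimes_C C\to C$ determined by $\Psi(c\otimes c')=cc'$ (the same isomorphism used in the proof of Lemma \ref{lem:iso1}, specialised to the trivial bimodule). The only real computation is to check that $\Psi$ restricts to a bijection of $Au\otimes_C Av$ onto $Auv$. For the inclusion $\Psi(Au\otimes_C Av)\subseteq Auv$: for $x,y\in A$ one has $\Psi(xu\otimes yv)=xuyv=x(uyu^*)\,uv$, and $x(uyu^*)\in A$ because $u\in K(M(C))$; passing to closed linear spans gives $\Psi(Au\otimes_C Av)\subseteq Auv$. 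For surjectivity one uses that the two conditions defining $K(M(C))$ together force $uAu^*=A$ (from $uAu^*\subseteq A$ one gets $A=u^*(uAu^*)u\subseteq u^*Au\subseteq A$, so $u^*Au=A$, and symmetrically $uAu^*=A$); hence $\{x(uyu^*):x,y\in A\}$ is dense in $A$, so $\{xuyv:x,y\in A\}$ is dense in $Auv$, and since the image of the closed subspace $Au\otimes_C Av$ under $\Psi$ is closed, it equals $Auv$. Injectivity of the restriction is free, as $\Psi$ is injective on all of $C\otimes_C C$.

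The main point requiring care is exactly this identification of the image: one must use both of the conditions $uAu^*\subseteq A$ and $u^*Au\subseteq A$ to see that $\{xuyv\}$ spans a dense subspace of $Auv$ rather than something smaller, and one must check that the relative tensor product $Au\otimes_A Av$ (with its left $A$-action, right action $xu\cdot a=x(uau^*)u$, and $A$-valued inner products as defined before Lemma \ref{lem:kernel}) is carried by $\pi$ of Lemma \ref{lem:bimodule} and then by $\Psi$ compatibly with the balancing over $A$. Once the restriction $\Psi|_{Au\otimes_C Av}$ is seen to be a bijection onto $Auv$, Lemma \ref{lem:simple} shows it is an $A-A$-equivalence bimodule isomorphism, whence $[Au\otimes_A Av \, , \, C\otimes_C C]=[Auv, C]$ and therefore $[Au, C][Av, C]=[Auv, C]$ in $\Pic(A,C)$, which lies in $\Ker f_C$ by the definition of $f_C$.
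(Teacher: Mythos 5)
Your proof is correct and follows essentially the same route as the paper's: both use the multiplication map $C\otimes_C C\to C$, the computation $xuyv=x(uyu^*)uv$, and Lemma \ref{lem:simple} to upgrade the restriction to an $A-A$-equivalence bimodule isomorphism. The only difference is that you explicitly verify surjectivity onto $Auv$ via $uAu^*=A$, a detail the paper leaves implicit.
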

\begin{proof}By the definition of the product in $\Pic(A, C)$,
$$
[Au, C][Av, C]=[Au\otimes_A Av, C\otimes_C C]
$$
in $\Pic(A, C)$. Hence we have only to show that
$$
[Au\otimes_A Av, C\otimes_C C]=[Auv, C]
$$
in $\Pic(A, C)$. Let $\Phi$ be the linear map from $C\otimes_C C$ to $C$ defined by
$$
\Phi(x\otimes y)=xy
$$
for any $x, y\in C$. Then clearly $\Phi$ is a $C-C$-equivalence bimodule isomorphism
of $C\otimes_C C$ onto $C$. Also, for any $x, y\in A$,
$$
\Phi(xu\otimes yv)=xuyv=xuyu^* uv\in Auv .
$$
Hence by Lemma \ref{lem:simple}, we can see that $\Phi|_{Au\otimes_A Av}$ is an
$A-A$-equivalence bimodule isomorphism of $Au\otimes_A Av$ onto $Auv$.
Therefore, we obtain the conclusion.
\end{proof}

\begin{lemma}\label{lem:equivalence1}With the above notation, let $u\in K(M(C))$.
Then the following conditions are equivalent:
\newline
$(1)$ $[Au, C]=[A, C]$ in $\Pic(A, C)$,
\newline
$(2)$ There is a unitary element $w\in M(A)$ such that
$w^* u\in C' \cap M(C)$.
\end{lemma}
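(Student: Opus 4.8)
The plan is to reduce both implications to one structural observation: a $C-C$-equivalence bimodule isomorphism $\Phi$ of the trivial bimodule $C$ onto itself is exactly left multiplication $\Phi(c)=mc=cm$ by a unitary $m$ of $C' \cap M(C)$. I would justify this as in the argument of \cite[Proposition 3.1]{BGR:linking} already invoked in the proof of Lemma \ref{lem:exact1}: being left and right $C$-linear, $\Phi$ makes $(\Phi,\Phi)$ a double centralizer of $C$, which determines $m\in M(C)$ with $\Phi(c)=mc=cm$, hence $m\in C' \cap M(C)$; preservation of the two $C$-valued inner products then forces $m^*m=mm^*=1$. Conversely, for any unitary $m\in C' \cap M(C)$ the map $c\mapsto mc$ is readily checked to be such an isomorphism.

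For $(2)\Rightarrow(1)$ I would take a unitary $w\in M(A)$ with $m:=w^*u\in C' \cap M(C)$; since $w$ and $u$ are unitaries of $M(C)$, so is $m$. Setting $\Phi(c)=mc$, the observation above gives that $\Phi$ is a $C-C$-equivalence bimodule automorphism of the trivial bimodule $C$. Because $w^*$ is a unitary of $M(A)$ one has $Aw^*=A$, so
$$
\Phi(A)=mA=Am=Aw^*u=(Aw^*)u=Au ,
$$
and then Lemma \ref{lem:simple} together with the remark following it yields $(A,C)\sim(Au,C)$, i.e. $[Au,C]=[A,C]$ in $\Pic(A,C)$.

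For $(1)\Rightarrow(2)$ I would start, via the remark after Lemma \ref{lem:simple}, from a $C-C$-equivalence bimodule isomorphism $\Phi$ of the trivial bimodule $C$ onto itself with $\Phi(A)=Au$, and write $\Phi(c)=mc=cm$ for a unitary $m\in C' \cap M(C)$ as above, so that $mA=Am=Au$. The crucial step is to show $mu^*\in M(A)$: one checks $A(mu^*)=(Am)u^*=(Au)u^*=A$ and $(mu^*)A=u^*(mA)=u^*(Au)=u^*Au\subseteq A$, the last inclusion using precisely that $u\in K(M(C))$. Since $mu^*$ is a unitary of $M(C)$ lying in $M(A)$, it is a unitary of $M(A)$, so $w:=um^*=(mu^*)^*$ is a unitary of $M(A)$ with $w^*u=mu^*u=m\in C' \cap M(C)$, which is $(2)$.

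The genuinely substantive points both sit in $(1)\Rightarrow(2)$: the reduction of $\Phi$ to multiplication by a central unitary multiplier (done through the double centralizer, exactly as in \cite[Proposition 3.1]{BGR:linking}), and the verification that $mu^*\in M(A)$. The latter is the one place where the hypothesis $u\in K(M(C))$ — rather than plain unitarity of $u$ — is used, and I expect it to be the main obstacle; everything else (that $c\mapsto mc$ is an equivalence bimodule isomorphism, and the one-sided ideal manipulations) is routine.
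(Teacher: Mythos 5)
Your proposal is correct, but it is organized differently from the paper's proof. The paper handles both implications by first recalling (from the proof of Lemma \ref{lem:kernel}) that $x\mapsto xu^*$ gives $[Au,C]=[X_{\Ad(u)},Y_{\Ad(u)}]$ in $\Pic(A,C)$, and then invoking Lemma \ref{lem:exact1}: $[X_{\Ad(u)},Y_{\Ad(u)}]=[A,C]$ precisely when $\Ad(u)=\Ad(w)$ on $C$ for some unitary $w\in M(A)$, which is exactly the condition $w^*u\in C'\cap M(C)$. So the double-centralizer argument of \cite[Proposition 3.1]{BGR:linking} is used only once in the paper, inside Lemma \ref{lem:exact1}, and this lemma is quoted as a black box. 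You instead rerun that argument directly on the trivial bimodule: an automorphism of $C$ as a $C$--$C$-equivalence bimodule is multiplication by a central unitary $m\in C'\cap M(C)$, and the condition $\Phi(A)=Au$ becomes $Am=Au$; you then need the extra step, absent from the paper, of verifying $mu^*\in M(A)$ via $A(mu^*)=A$ and $(mu^*)A=u^*Au\subseteq A$. Both arguments are sound. The paper's route is shorter because it reuses Lemmas \ref{lem:kernel} and \ref{lem:exact1}; yours is more self-contained and has the merit of isolating exactly where the hypothesis $u\in K(M(C))$ (rather than mere unitarity of $u$) is used, namely in the inclusion $(mu^*)A\subseteq A$. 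One point worth making explicit if you write this up: the identification of an element $x\in M(C)$ with $xA\subseteq A$ and $Ax\subseteq A$ with an element of $M(A)$ uses the embedding $M(A)\subset M(C)$ coming from $\overline{AC}=C$, which is the standing assumption of that section.
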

\begin{proof}$(1)\Rightarrow (2)$: By the proof of Lemma \ref{lem:kernel},
$[Au, C]=[X_{\Ad(u)}, Y_{\Ad(u)}]$ in $\Pic(A, C)$. Hence since
$[X_{\Ad(u)}, Y_{\Ad(u)}]=[A, C]$ in $\Pic(A, C)$, by the proof of Lemma \ref{lem:exact1},
there is a unitary element $w\in M(A)$ such that
$\Ad(u)=\Ad(w)$
on $C$. Hence $w^* u\in C' \cap M(C)$ since $M(A)\subset M(C)$.
\newline
$(2)\Rightarrow (1)$: Since $w^* u\in C' \cap M(C)$, $\Ad(u)=\Ad(w)$ on $C$.
Thus
$$
[Au, C]=[X_{\Ad(u)}, Y_{\Ad(u)}]=[X_{\Ad(w)}, Y_{\Ad(w)}]=[A, C]
$$
in $\Pic(A, C)$ by the proofs of Lemmas \ref{lem:exact1} and \ref{lem:kernel}.
\end{proof}

Let $U(M(A))$ be the group of all unitary elements in $M(A)$. Then $U(M(A))$ is a
subgroup of $K(M(C))$ since $M(A)\subset M(C)$ is a unital inclusion.

\begin{lemma}\label{lem:normal}With the above notation, $U(M(A))$ is a normal
subgroup of $K(M(C))$.
\end{lemma}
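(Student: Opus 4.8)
The plan is to verify the normal-subgroup criterion directly: for every $u\in K(M(C))$ and every $w\in U(M(A))$ I want to show $uwu^{*}\in U(M(A))$. Since $u^{*}\in K(M(C))$ as well (it is a unitary of $M(C)$ with $u^{*}A(u^{*})^{*}=u^{*}Au\subseteq A$ and $(u^{*})^{*}Au^{*}=uAu^{*}\subseteq A$), this gives $uU(M(A))u^{*}=U(M(A))$, which is exactly normality. Now $uwu^{*}$ is visibly a unitary of $M(C)$, so the whole point is to see that it lies in $M(A)$. For that I would invoke the identification — coming from Izumi \cite{Izumi:simple} together with $\overline{AC}=C$ (hence also $\overline{CA}=C$, by taking adjoints) — of $M(A)$, embedded in $M(C)$, with the relative multiplier algebra $\{m\in M(C)\mid mA\subseteq A,\ Am\subseteq A\}$. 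In particular this embedding is unital, $1_{M(A)}=1_{M(C)}$, so a unitary of $M(C)$ that happens to lie in $M(A)$ is automatically a unitary of $M(A)$.

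Granting this description, the computation is short. First recall that $u\in K(M(C))$ says precisely $uAu^{*}=A=u^{*}Au$. Given $a\in A$, using $u^{*}a=(u^{*}au)u^{*}$ one gets
\[
(uwu^{*})a=u\,\bigl(w(u^{*}au)\bigr)\,u^{*}\in uAu^{*}=A,
\]
since $u^{*}au\in A$ and $w\in M(A)$ force $w(u^{*}au)\in A$; symmetrically, using $au=u(u^{*}au)$,
\[
a(uwu^{*})=u\,\bigl((u^{*}au)w\bigr)\,u^{*}\in uAu^{*}=A.
\]
Thus $uwu^{*}$ maps $A$ into $A$ on both sides, hence $uwu^{*}\in M(A)$ by the relative multiplier description, and being a unitary of $M(C)$ it is then a unitary of $M(A)$, i.e. $uwu^{*}\in U(M(A))$.

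I do not anticipate a genuine obstacle: once the relative multiplier description of $M(A)$ inside $M(C)$ is in hand, the argument is purely formal. The one point that deserves a sentence is precisely the legitimacy of conjugating a multiplier of $A$ by $u$ and landing back in $M(A)$ rather than merely in $M(C)$ — this is exactly what the relative multiplier description (equivalently, the uniqueness of the strictly continuous extension $M(A)\hookrightarrow M(C)$ for the nondegenerate inclusion $A\subseteq C$) supplies. With $uwu^{*}\in U(M(A))$ established for all $u\in K(M(C))$ and $w\in U(M(A))$, normality of $U(M(A))$ in $K(M(C))$ follows immediately.
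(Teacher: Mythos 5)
Your proof is correct, and it reaches the same destination as the paper's by a slightly different mechanism. The paper's argument takes a net $\{w_{\lambda}\}$ in $A$ converging strictly to $w$, observes that $uw_{\lambda}u^{*}\in A$ for every $\lambda$ (since $uAu^{*}\subseteq A$), and concludes that the strict limit $uwu^{*}$ lies in $M(A)$; the step from ``strict limit in $M(C)$ of elements of $A$'' to ``element of $M(A)$'' is left implicit, and it is in effect the same identification of $M(A)\hookrightarrow M(C)$ with the idealizer $\{m\in M(C)\mid mA\subseteq A,\ Am\subseteq A\}$ that you invoke explicitly. You bypass the approximation entirely and verify the idealizer condition for $uwu^{*}$ directly, using $u^{*}a=(u^{*}au)u^{*}$ and $au=u(u^{*}au)$ together with $uAu^{*}=A=u^{*}Au$; this is purely algebraic, makes the one nontrivial input (the relative multiplier description, which does follow from nondegeneracy of $A\subseteq C$ as you say) visible rather than hidden, and avoids having to justify that multiplication by the fixed unitaries $u,u^{*}$ preserves strict convergence. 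The net argument in the paper is shorter on the page but rests on exactly the facts you spell out, so the two proofs are equivalent in substance; yours is the more self-contained of the two.
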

\begin{proof}Let $u\in K(M(C))$ and $w\in U(M(A))$. Let $\{w_{\lambda}\}_{\lambda\in\Lambda}$ be
a net in $A$ such that $\{w_{\lambda}\}_{\lambda\in\Lambda}$ is strictly convergent
to $w\in M(A)$. Then since $uw_{\lambda}u^*\in A$ for any $\lambda\in\Lambda$,
$uwu^* \in U(M(A))$. Therefore we obtain the conclusion.
\end{proof}

Let $\BK$ be the $C^*$-algebra of all compact operators on a countably inifinite
dimensional Hilbert space. Let $A\subset C$ be an inclusion of $C^*$-algebras with $\overline{AC}=C$
and $A^s \subset C^s$ the inclusion of $C^*$-algebras induced by $A\subset C$, where
$A^s =A\otimes \BK$ and $C^s =C\otimes \BK$. Let $S(A^s , C^s )$ be the subgroup of $\Pic(C^s )$
defined by
$$
S(A^s , C^s )=\{ [Y_{\alpha}]\in \Pic(C^s ) \, | \, \alpha\in \Aut (A^s , C^s ) \} .
$$
Then by Proposition \ref {prop:exact2}, $S(A^s, C^s )=\Ima f_{C^s}$, where $f_{C^s}$ is
the homomorphism of $\Pic(A^s, C^s )$ to
$\Pic(C^s )$ defined in the same way as in the above. By Lemma \ref{lem:kernel}
$$
\Ker f_{C^s}= \{[A^s u \, , C^s ] \in \Pic(A^s , C^s ) \, | \, u\in K(M(C^s )) \}.
$$ 
Furthermore, we suppose that $(C^s )' \cap M(C^s )=\BC1$. Then by Lemmas \ref{lem:product2}, \ref{lem:equivalence1},
$\Ker f_{C^s}\cong K(M(C^s ))/U(M(A^s ))$
as groups. Thus, we obtain the following theorem:

\begin{thm}\label{thm:exact3}Let $A\subset C$ be an inclusion of $C^*$-algebras such that
$A$ is $\sigma$-unital and $\overline{AC}=C$. Let
$A^s \subset C^s$ be the inclusion of $C^*$-algebras induced by $A\subset C$,
where $A^s =A\otimes\BK$ and $C^s =C\otimes \BK$. We suppose that
$(C^s)' \cap M(C^s )=\BC1_{M(C^s)}$. Then we have the following exact sequence:
$$
1\longrightarrow K(M(C^s ))/U(M(A^s ))\longrightarrow\Pic(A^s , C^s )\overset{f_C^s }
\longrightarrow S(A^s , C^s )\longrightarrow 1 .
$$
\end{thm}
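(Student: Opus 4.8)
The plan is to deduce the exact sequence from the material already assembled just above the statement, by applying the first isomorphism theorem to the group homomorphism $f_{C^s}\colon\Pic(A^s,C^s)\to\Pic(C^s)$ and then identifying its image and its kernel separately. First I would check exactness at $S(A^s,C^s)$, i.e.\ that $f_{C^s}$ maps $\Pic(A^s,C^s)$ onto $S(A^s,C^s)$. By Proposition \ref{prop:exact2} the homomorphism $\pi\colon\Aut(A^s,C^s)\to\Pic(A^s,C^s)$ is surjective, so every class in $\Pic(A^s,C^s)$ has the form $[X_\alpha,Y_\alpha]$ for some $\alpha\in\Aut(A^s,C^s)$; applying $f_{C^s}$ gives $[Y_\alpha]$, and conversely every $[Y_\alpha]$ with $\alpha\in\Aut(A^s,C^s)$ arises in this way. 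Hence $\Ima f_{C^s}=S(A^s,C^s)$, which is exactness at the right-hand term.

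Next I would identify $\Ker f_{C^s}$. Consider the map $\rho\colon K(M(C^s))\to\Pic(A^s,C^s)$, $\rho(u)=[A^s u,C^s]$, which is well defined by the remark preceding Lemma \ref{lem:kernel}. Lemma \ref{lem:product2} shows that $\rho$ is a group homomorphism, and Lemma \ref{lem:kernel} (together with the surjectivity of $\pi$) shows that $\Ima\rho=\Ker f_{C^s}$. To compute $\Ker\rho$ I would invoke Lemma \ref{lem:equivalence1}: $\rho(u)=[A^s,C^s]$ holds precisely when there is a unitary $w\in M(A^s)$ with $w^* u\in(C^s)'\cap M(C^s)$. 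This is where the hypothesis $(C^s)'\cap M(C^s)=\BC1$ enters: it forces $w^* u$ to be a scalar multiple of $1$, necessarily of modulus one since $w^* u$ is unitary, so $u\in U(M(A^s))$; conversely every $u\in U(M(A^s))$ satisfies $w^* u=1\in(C^s)'\cap M(C^s)$ with $w=u$. Thus $\Ker\rho=U(M(A^s))$, which is a normal subgroup of $K(M(C^s))$ by Lemma \ref{lem:normal}, so $\rho$ induces an isomorphism $K(M(C^s))/U(M(A^s))\cong\Ker f_{C^s}$ of groups.

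Finally, the short exact sequence $1\to\Ker f_{C^s}\to\Pic(A^s,C^s)\overset{f_{C^s}}{\longrightarrow}\Ima f_{C^s}\to1$ coming from the first isomorphism theorem becomes, after substituting $\Ker f_{C^s}\cong K(M(C^s))/U(M(A^s))$ and $\Ima f_{C^s}=S(A^s,C^s)$, exactly the asserted sequence. I do not anticipate a serious obstacle: the substantive content is carried by Proposition \ref{prop:exact2} (whose proof imports the stabilization argument of \cite{BGR:linking}) and by Lemmas \ref{lem:kernel}--\ref{lem:normal}. The single point deserving care is the passage, inside the computation of $\Ker\rho$, from the relative-commutant condition of Lemma \ref{lem:equivalence1} to genuine membership in $U(M(A^s))$; this is precisely the role of the triviality assumption $(C^s)'\cap M(C^s)=\BC1$.
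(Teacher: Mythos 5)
Your proposal is correct and follows essentially the same route as the paper: the paper likewise obtains $\Ima f_{C^s}=S(A^s,C^s)$ from Proposition \ref{prop:exact2}, identifies $\Ker f_{C^s}$ as $\{[A^s u, C^s]\mid u\in K(M(C^s))\}$ via Lemma \ref{lem:kernel}, and uses Lemmas \ref{lem:product2} and \ref{lem:equivalence1} (with the hypothesis $(C^s)'\cap M(C^s)=\BC1$) to get $\Ker f_{C^s}\cong K(M(C^s))/U(M(A^s))$. Your explicit remarks on normality via Lemma \ref{lem:normal} and on why the relative-commutant condition collapses to $u\in U(M(A^s))$ merely spell out details the paper leaves implicit.
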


Again, we consider an inclusion of $C^*$-algebras, $A\subset C$ with $\overline{AC}=C$.
Let $f_A$ be the homomorphism of $\Pic(A, C)$ to $\Pic(A)$ defined by
$$
f_A : \Pic(A, C)\to \Pic(A): [X, Y]\mapsto [X] ,
$$
where $\Pic(A)$ is the ordinary Picard group of $A$. Let $\Aut_0 (A, C)$ be the group of all
automorphisms $\alpha$ of $C$ with $\alpha=\id$ on $A$. Then by easy computations, $\Aut_0 (A, C)$
is a normal subgroup of $\Aut (A, C)$.

\begin{lemma}\label{lem:kernel2}With the above notation, let $\alpha\in \Aut(A, C)$. Then the following
conditions are equivalent:
\newline
$(1)$ $[X_{\alpha}, Y_{\alpha}]\in\Ker f_A$,
\newline
$(2)$ There is a $\beta\in Aut_0 (A, C)$ such that $[X_{\alpha}, Y_{\alpha}]=[X_{\beta}, Y_{\beta}]$
in $\Pic(A, C)$.
\end{lemma}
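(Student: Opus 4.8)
The plan is to split the equivalence into its two implications, handling $(2)\Rightarrow(1)$ directly and reducing $(1)\Rightarrow(2)$ to the Brown--Green--Rieffel double-centralizer argument already used in the proof of Lemma~\ref{lem:exact1}.

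For $(2)\Rightarrow(1)$: suppose $\beta\in\Aut_0(A,C)$, so $\beta|_A=\id$ on $A$. By the construction of the equivalence bimodule induced by an automorphism (see Preliminaries), when the implementing automorphism is the identity the right $A$-action $x\cdot a=x\beta|_A(a)=xa$ and the right $A$-valued inner product $\la x,y\ra_A=\beta|_A^{-1}(x^*y)=x^*y$ are the usual ones; hence $X_\beta$ is just $A$ as an $A-A$-equivalence bimodule. Therefore $f_A([X_\beta,Y_\beta])=[X_\beta]=[A]$, the unit of $\Pic(A)$. Since $f_A$ is a homomorphism and $[X_\alpha,Y_\alpha]=[X_\beta,Y_\beta]$ in $\Pic(A,C)$, we get $f_A([X_\alpha,Y_\alpha])=[A]$, i.e. $[X_\alpha,Y_\alpha]\in\Ker f_A$.

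For $(1)\Rightarrow(2)$: the hypothesis $[X_\alpha,Y_\alpha]\in\Ker f_A$ means $X_\alpha\cong A$ as $A-A$-equivalence bimodules. Applying the argument of \cite[Proposition~3.1]{BGR:linking}, exactly as in the proof of Lemma~\ref{lem:exact1} but now for the single algebra $A$, I would produce a unitary $w\in M(A)$ with $\alpha|_A=\Ad(w)$ on $A$. Set $\beta=\Ad(w^*)\circ\alpha$. Since $w\in M(A)\subset M(C)$, the automorphism $\Ad(w^*)$ of $C$ restricts to an automorphism of $A$, so $\beta\in\Aut(A,C)$; moreover $\beta|_A=\Ad(w^*)\circ\Ad(w)=\id$ on $A$, so $\beta\in\Aut_0(A,C)$, and $(X_\beta,Y_\beta)\in\Equi(A,C)$ as recorded in Section~\ref{sec:definition}.

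It then remains to check that $[X_\alpha,Y_\alpha]=[X_\beta,Y_\beta]$. Since $\alpha=\Ad(w)\circ\beta$, the homomorphism property of the map $\pi$ established in Section~\ref{sec:definition} gives $[X_\alpha,Y_\alpha]=[X_{\Ad(w)},Y_{\Ad(w)}]\,[X_\beta,Y_\beta]$, and $[X_{\Ad(w)},Y_{\Ad(w)}]=[A,C]$ is the unit of $\Pic(A,C)$ by the computation in the proof of Lemma~\ref{lem:exact1}; hence $[X_\alpha,Y_\alpha]=[X_\beta,Y_\beta]$, which is $(2)$. The only step that is not purely formal is the passage ``$X_\alpha\cong A$ as $A-A$-equivalence bimodules $\Rightarrow$ $\alpha|_A$ is a generalized inner automorphism,'' and I expect no genuine obstacle there beyond transcribing, for $A$ alone, the double-centralizer argument already carried out for the inclusion in Lemma~\ref{lem:exact1}.
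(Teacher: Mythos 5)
Your proposal is correct and follows essentially the same route as the paper: for $(1)\Rightarrow(2)$ both arguments use \cite[Proposition 3.1]{BGR:linking} to extract a unitary $w\in M(A)$ with $\alpha|_A=\Ad(w)$ on $A$, set $\beta=\Ad(w^*)\circ\alpha\in\Aut_0(A,C)$, and cancel $[X_{\Ad(w)},Y_{\Ad(w)}]=[A,C]$ via Lemma~\ref{lem:exact1}; for $(2)\Rightarrow(1)$ both observe that $X_\beta=A$ forces $[X_\beta]=[A]$. No gaps.
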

\begin{proof}$(1)\Rightarrow (2)$: Since $[X_{\alpha}, Y_{\alpha}]\in\Ker f_A$, $[X_{\alpha}]=[A]$ in
$\Pic(A)$. Hence there is a unitary element $u\in M(A)$ such that $\alpha =\Ad(u)$ on
$A$. Since $[X_{\Ad(u^* )}, Y_{\Ad(u^* )}]=[A, C]$ in $\Pic(A, C)$,
$$
[X_{\alpha} \, , Y_{\alpha}]=[X_{\Ad(u^* )} \, , Y_{\Ad(u^* )}][X_{\alpha} \, , Y_{\alpha}]
=[X_{\Ad(u^* )\circ\alpha} \, , Y_{\Ad(u^* )\circ\alpha}]
$$
in $\Pic(A)$. Let $\beta=\Ad(u^* )\circ\alpha$. Then $\beta\in \Aut_0 (A, C)$.
\newline
$(2)\Rightarrow (1)$: Since $X_{\beta}=A$, $[X_{\beta} \, , Y_{\beta}]\in\Ker f_A$.
Hence $[X_{\alpha} , \, Y_{\alpha}]\in\Ker f_A$.
\end{proof}

Let $\pi$ be the homomorphism of $\Aut(A, C)$ to $\Pic(A, C)$
defined by
$$
\pi(\alpha)=[X_{\alpha} , \, Y_{\alpha}]
$$
for any $\alpha\in Aut(A, C)$. Let $\Aut_I (A, C)$ be the subset of $\Aut (A, C)$ defined by
$$
\Aut_I (A, C)=\{\alpha\in\Aut(A, C) \, | \, \alpha|_A \in \Int(A) \} .
$$
Then clearly $\Aut_I (A, C)$ is a subgroup of $\Aut(A, C)$. Also, $\Aut_I (A, C)$
is a normal subgroup of $\Aut (A, C)$. Indeed, let $\alpha\in\Aut_I (A, C)$. Then
there is a unitary element $u\in M(A)$ such that $\alpha(a)=uau^*$ for any $a\in A$.
Hence for any $\beta\in \Aut(A, C)$ and $a\in A$,
$$
(\beta\circ\alpha\circ\beta^{-1})(a)=\beta(u\beta^{-1}(a)u^* )=\underline{\beta}(u)a\underline{\beta}(u^* )
$$
since $\beta|_A \in \Aut(A)$, where $\underline{\beta}$ is an automorphism of $M(C)$ induced by $\beta$,
whose restriction $\underline{\beta}|_{M(A)}$ is also an automorphism of $M(A)$.
Thus $\Aut(A, C)$ is a normal subgroup of $\Aut(A, C)$.

\begin{lemma}\label{lem:kernel3}With the above notation, let $\alpha\in \Aut (A, C)$. Then the following
conditions are equivalent:
\newline
$(1)$ $[X_{\alpha} \, , Y_{\alpha}]\in \Ker f_A$,
\newline
$(2)$ $\alpha\in\Aut_I (A, C)$.
\end{lemma}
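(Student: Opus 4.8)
The plan is to collapse the whole question to the ordinary Picard group $\Pic(A)$, where the needed fact is already classical. The key observation is that, by construction in Section \ref{sec:definition}, $X_{\alpha}$ is the $A-A$-equivalence bimodule induced by $\alpha|_A$ alone; hence $f_A([X_{\alpha},Y_{\alpha}])=[X_{\alpha|_A}]$ in $\Pic(A)$ is precisely the image of $\alpha|_A$ under the homomorphism $\Aut(A)\to\Pic(A)$ of Brown--Green--Rieffel. By \cite[Proposition 3.1]{BGR:linking}, the kernel of that homomorphism is $\Int(A)$, and this is all I will really need.

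For $(1)\Rightarrow(2)$, I would assume $[X_{\alpha},Y_{\alpha}]\in\Ker f_A$, so $[X_{\alpha|_A}]=[A]$ in $\Pic(A)$; by the cited result $\alpha|_A$ is a generalized inner automorphism of $A$, i.e.\ $\alpha|_A\in\Int(A)$, which is exactly the assertion $\alpha\in\Aut_I(A,C)$. This is essentially the content of the first two sentences of the proof of Lemma \ref{lem:kernel2}. For $(2)\Rightarrow(1)$, if $\alpha\in\Aut_I(A,C)$ choose a unitary $u\in M(A)$ with $\alpha|_A=\Ad(u)$; then $X_{\alpha}=X_{\Ad(u)}$, and the map $A\to X_{\Ad(u)}$, $x\mapsto xu^{*}$, is an $A-A$-equivalence bimodule isomorphism. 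This is exactly the computation carried out in the proof of Lemma \ref{lem:exact1}, now performed inside $A$ instead of inside $C$: one checks it intertwines the left and right $A$-actions and preserves both $A$-valued inner products. Hence $[X_{\alpha}]=[A]$ in $\Pic(A)$, i.e.\ $[X_{\alpha},Y_{\alpha}]\in\Ker f_A$.

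There is essentially no obstacle here; the only point requiring a word of care is that $f_A$ depends only on the sub-bimodule component $X$ and that $X_{\alpha}$ is manufactured solely from $\alpha|_A$, so the equivalence is a direct restriction of the Brown--Green--Rieffel description of $\Ker(\Aut(A)\to\Pic(A))$. The bimodule isomorphism computations needed for $(2)\Rightarrow(1)$ are routine and parallel to those already done in Lemma \ref{lem:exact1}, so I would simply refer to that proof rather than repeat them.
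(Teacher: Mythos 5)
Your proof is correct, but it takes a genuinely different route from the paper's proof of Lemma \ref{lem:kernel3}. You reduce everything to the ordinary Picard group: since $f_A([X_{\alpha},Y_{\alpha}])=[X_{\alpha}]$ and $X_{\alpha}$ is manufactured from $\alpha|_A$ alone, condition $(1)$ says exactly that $\alpha|_A$ lies in the kernel of the classical homomorphism $\Aut(A)\to\Pic(A)$, which is $\Int(A)$ by \cite[Proposition 3.1]{BGR:linking}; this is immediately equivalent to $\alpha\in\Aut_I(A,C)$. That is, in effect, the argument the paper gives for the \emph{next} statement, Lemma \ref{lem:kernel4}, whose two conditions are logically identical to those of Lemma \ref{lem:kernel3} (membership in $\Ker f_A$ means precisely $[X_{\alpha}]=[A]$ in $\Pic(A)$). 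The paper's own proof of Lemma \ref{lem:kernel3} instead stays inside $\Pic(A,C)$: it uses Lemma \ref{lem:kernel2} to replace $[X_{\alpha},Y_{\alpha}]$ by $[X_{\beta},Y_{\beta}]$ with $\beta\in\Aut_0(A,C)$, and then Lemma \ref{lem:exact1} to get $\alpha\circ\beta^{-1}\in\Int(A,C)=\Int(A)$, hence $\alpha(a)=uau^*$ on $A$ for a unitary $u\in M(A)$. The paper's route has the advantage of producing the representative $\beta\in\Aut_0(A,C)$ and of remaining within the relative Picard group formalism that is used immediately afterwards (for instance in identifying $\Ker f_{A^s}$ with $\Aut_I(A^s,C^s)/\Int(A^s,C^s)$), while yours is shorter and relies only on the Brown--Green--Rieffel kernel description; the bimodule computation you delegate for $(2)\Rightarrow(1)$, namely that $x\mapsto xu^*$ gives an $A$-$A$-equivalence bimodule isomorphism of $A$ onto $X_{\Ad(u)}$, is indeed already carried out in the proof of Lemma \ref{lem:exact1}. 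Both arguments are complete.
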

\begin{proof}$(1)\Rightarrow (2)$: By Lemma \ref{lem:kernel2}, there is a $\beta\in\Aut_0 (A, C)$ such that
$[X_{\alpha} \, , Y_{\alpha}]=[X_{\beta} \, , Y_{\beta}]$ in $\Pic(A, C)$. Hence by Lemma \ref{lem:exact1},
$\alpha\circ\beta^{-1}\in\Int(A, C)$. Thus there is a unitary element $u\in M(A)$ such that
$\alpha\circ\beta^{-1}=\Ad(u)$, that is, $\alpha=\Ad(u)\circ\beta$. Then for any $a\in A$,
$\alpha(a)=u\beta(a)u^* =uau^*$. Hence $\alpha\in\Aut_I (A, C)$.
\newline
$(2)\Rightarrow (1)$: Since $\alpha\in\Aut_I (A, C)$, there is a unitary element
$u\in M(A)$ such that $\alpha(a)=uau^*$ for any $a\in A$. Let $\beta=\Ad(u^* )\circ\alpha$. Then
$\beta\in\Aut(A, C)$ since $M(A)\subset M(C)$ is a unital inclusion. Also, $\beta(a)=a$ for any $a\in A$.
Hence $\beta\in \Aut_0 (A, C)$. Thus $[X_{\alpha} , \, Y_{\alpha}]=[X_{\beta}, \, Y_{\beta}]$
in $\Pic(A, C)$ by Lemma \ref{lem:exact1}. Hence by Lemma \ref{lem:kernel2},
$[X_{\alpha} , \, Y_{\alpha}]\in\Ker f_A$.
\end{proof}

\begin{lemma}\label{lem:kernel4}With the above notations, let  $\alpha\in\Aut(A, C)$.
Then the following conditions are equivalent:
\newline
$(1)$ $[X_{\alpha}]=[A]$ in $\Pic(A)$,
\newline
$(2)$ $\alpha\in\Aut_I (A, C)$.
\end{lemma}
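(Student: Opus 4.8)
The plan is to obtain this as an immediate corollary of Lemma \ref{lem:kernel3}. By the very definition of the homomorphism $f_A : \Pic(A, C)\to\Pic(A)$, $[X, Y]\mapsto[X]$, the assertion ``$[X_{\alpha}]=[A]$ in $\Pic(A)$'' (with $[A]$ the identity element of $\Pic(A)$) is literally the same as $f_A ([X_{\alpha}, Y_{\alpha}])=[A]$, i.e.\ as $[X_{\alpha}, Y_{\alpha}]\in\Ker f_A$. Thus condition $(1)$ is just a rephrasing of membership in $\Ker f_A$, and Lemma \ref{lem:kernel3} says precisely that this is equivalent to $\alpha\in\Aut_I (A, C)$, which is condition $(2)$. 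First I would state this identification explicitly, then invoke Lemma \ref{lem:kernel3}; nothing further is needed.

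For completeness I would also indicate the direct route, independent of Lemma \ref{lem:kernel3}. For $(2)\Rightarrow(1)$: if $\alpha\in\Aut_I (A, C)$, write $\alpha|_A =\Ad(u)$ with $u\in M(A)$ a unitary, so that $X_{\alpha}=X_{\Ad(u)}$; then the map $x\mapsto xu^*$ is an $A-A$-equivalence bimodule isomorphism of $A$ onto $X_{\Ad(u)}$, by the computation already carried out (restricted to $A$) in the proof of Lemma \ref{lem:exact1}, whence $[X_{\alpha}]=[A]$ in $\Pic(A)$. For $(1)\Rightarrow(2)$: if $[X_{\alpha}]=[A]$ in $\Pic(A)$, choose an $A-A$-equivalence bimodule isomorphism $\Phi$ of $A$ onto $X_{\alpha}$; the double-centralizer argument of \cite [Proposition 3.1]{BGR:linking} (the one reproduced in the proof of Lemma \ref{lem:exact1}) produces a unitary $u\in M(A)$ with $\alpha|_A =\Ad(u^*)$, so $\alpha|_A\in\Int(A)$ and hence $\alpha\in\Aut_I (A, C)$.

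There is no serious obstacle here: the substance has already been established in Lemma \ref{lem:kernel3}, and the only thing to verify is the essentially tautological identification of $(1)$ with $[X_{\alpha}, Y_{\alpha}]\in\Ker f_A$, which is immediate from the definition of $f_A$. If instead one wants the self-contained version, the single step carrying any weight is the passage from a bimodule isomorphism $A\cong X_{\alpha}$ to a generalized inner automorphism, but that is exactly the classical Brown--Green--Rieffel argument already used in the proof of Lemma \ref{lem:exact1}, so no new computation is required.
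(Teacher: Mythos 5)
Your proposal is correct. Your ``direct route'' is in fact the paper's own proof: for $(1)\Rightarrow(2)$ the paper simply invokes \cite[Proposition 3.1]{BGR:linking} to produce a unitary $u\in M(A)$ with $\alpha|_A=\Ad(u)$, and for $(2)\Rightarrow(1)$ it observes that $\alpha|_A\in\Int(A)$ forces $[X_\alpha]=[A]$ (the isomorphism $x\mapsto xu^*$ from the proof of Lemma \ref{lem:exact1}, restricted to $A$). Your primary route --- noting that $f_A([X_\alpha,Y_\alpha])=[X_\alpha]$ by definition, so condition $(1)$ here is verbatim the condition $[X_\alpha,Y_\alpha]\in\Ker f_A$ of Lemma \ref{lem:kernel3} --- is a genuinely different and more economical derivation; it exposes the fact that Lemmas \ref{lem:kernel3} and \ref{lem:kernel4} are tautologically equivalent restatements of one another (and there is no circularity, since Lemma \ref{lem:kernel3} rests only on Lemmas \ref{lem:kernel2} and \ref{lem:exact1}). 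What the paper's direct argument buys is independence from the chain of lemmas \ref{lem:kernel2}--\ref{lem:kernel3}, resting only on the classical Brown--Green--Rieffel result for the ordinary Picard group; what your reduction buys is the observation that no new argument is needed at all. Either is acceptable, and your write-up contains both.
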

\begin{proof}$(1)\Rightarrow (2)$: By \cite [Proposition 3.1]{BGR:linking}, there is
a unitary element $u\in M(A)$ such that $\alpha|_A =\Ad(u)$. Hence $\alpha\in \Aut_I (A, C)$.
\newline
$(2)\Rightarrow (1)$: Since $\alpha|_A \in\Int (A)$, we can see that $[X_{\alpha}]=[A]$ in $\Pic(A)$.
\end{proof}

Let $A^s \subset C^s$ be the inclusion of $C^*$-algebras induced by the inclusion of
$C^*$-algebras $A\subset C$ with $\overline{AC}=C$. We suppose that $A$ is
$\sigma$-unital. Then by Proposition \ref{prop:exact2},
the homomorphism $\pi$ of $\Aut(A^s , C^s )$to $\Pic(A^s , C^s )$ is surjective. Hence
$$
\Ima f_{A^s}=\{[X_{\alpha}]\in\Pic(A^s ) \, | \, \alpha\in \Aut(A^s , C^s ) \}
$$
and by Lemma \ref {lem:kernel2},
$$
\Ker f_{A^s}=\{[X_{\alpha}, \, Y_{\alpha}]\in\Pic(A^s , C^s ) \, | \, \alpha\in\Aut_0 (A^s , C^s ) \} .
$$
Also, we have the exact sequence
$$
1\longrightarrow \Ker f_{A^s} \longrightarrow \Pic(A^s , C^s )\longrightarrow \Ima f_{A^s} \longrightarrow 1 .
$$
Since $\pi$ is a surjective homomorphism of $\Aut_I (A^s , C^s )$ onto $\Ker f_{A^s}$ by
Lemma \ref{lem:kernel3}, we can see that
$$
\Ker f_{A^s}\cong \Aut_I (A^s , C^s )/\Int (A^s , C^s )
$$
by Proposition \ref{prop:exact2}. Also, $f_{A^s}\circ\pi$ is a surjective homomorphism of $\Aut(A^s , C^s )$
onto $\Ima f_{A^s}$, we can see that
$$
\Ima f_{A^s} \cong \Aut (A^s , C^s )/\Aut_I (A^s , C^s )
$$
by Lemma \ref{lem:kernel4}. By the above discussions, we obtain the
following theorem:

\begin{thm}\label{thm:exact4}Let $A\subset C$ be an inclusion of
$C^*$-algebras such that $A$ is $\sigma$-unital and $\overline{AC}=A$.
Let $A^s \subset C^s$ be the inclusion of $C^*$-algebras induced by
$A\subset C$, where $A^s =A\otimes \BK$ and $C^s =C\otimes \BK$. Then
we have the following:
\begin{align*}
1\longrightarrow & \Ker f_{A^s}  \longrightarrow \Pic(A^s , C^s )\longrightarrow \Ima f_{A^s} \longrightarrow 1 , \\
& \Ker f_{A^s}\cong \Aut_I (A^s , C^s )/\Int (A^s , C^s ) , \\
& \Ima f_{A^s} \cong \Aut (A^s , C^s )/\Aut_I (A^s , C^s ) .
\end{align*}
\end{thm}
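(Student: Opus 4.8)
The plan is to assemble the exact sequence and the two isomorphisms entirely from material already in hand, the crucial input being the surjectivity of $\pi$ after stabilization. First I would invoke Proposition \ref{prop:exact2}: since $A$ is $\sigma$-unital, the homomorphism $\pi\colon\Aut(A^s,C^s)\to\Pic(A^s,C^s)$ is surjective, so every class in $\Pic(A^s,C^s)$ is of the form $[X_\alpha,Y_\alpha]$ for some $\alpha\in\Aut(A^s,C^s)$. The first line of the display is then nothing but the tautological exact sequence $1\to\Ker f_{A^s}\to\Pic(A^s,C^s)\to\Ima f_{A^s}\to 1$ attached to the group homomorphism $f_{A^s}$, so there is nothing to prove there beyond recalling that $f_{A^s}$ is a homomorphism.

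For the kernel I would restrict $\pi$ to $\Aut_I(A^s,C^s)$. By Lemma \ref{lem:kernel3}, read for the inclusion $A^s\subset C^s$, a class $[X_\alpha,Y_\alpha]$ lies in $\Ker f_{A^s}$ exactly when $\alpha\in\Aut_I(A^s,C^s)$; together with the surjectivity of $\pi$ this gives $\pi(\Aut_I(A^s,C^s))=\Ker f_{A^s}$. The kernel of this restricted homomorphism is $\Aut_I(A^s,C^s)\cap\Int(A^s,C^s)=\Int(A^s,C^s)$, using $\Int(A^s,C^s)\subseteq\Aut_I(A^s,C^s)$ and the fact that $\Ker\pi=\Int(A^s,C^s)$, which is the exactness asserted by Lemma \ref{lem:exact1} (valid for $A^s\subset C^s$ by Proposition \ref{prop:exact2}). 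The first isomorphism theorem then yields $\Ker f_{A^s}\cong\Aut_I(A^s,C^s)/\Int(A^s,C^s)$.

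For the image I would pass to the composite $f_{A^s}\circ\pi\colon\Aut(A^s,C^s)\to\Pic(A^s)$, whose image is $\Ima f_{A^s}$ because $\pi$ is onto $\Pic(A^s,C^s)$. By Lemma \ref{lem:kernel4} one has $(f_{A^s}\circ\pi)(\alpha)=[X_\alpha]=[A^s]$ in $\Pic(A^s)$ if and only if $\alpha\in\Aut_I(A^s,C^s)$, so $\Ker(f_{A^s}\circ\pi)=\Aut_I(A^s,C^s)$; the first isomorphism theorem gives $\Ima f_{A^s}\cong\Aut(A^s,C^s)/\Aut_I(A^s,C^s)$.

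Every ingredient here---surjectivity of $\pi$, the characterization of $\Ker f_{A^s}$ via $\Aut_I$, and the identification $\Ker\pi=\Int(A^s,C^s)$---is already established, so I expect no serious obstacle. The only point needing a moment of care is purely bookkeeping: the lemmas of Section \ref{sec:definition}, notably Lemma \ref{lem:exact1}, were phrased for a general inclusion with $\overline{AC}=C$ and must be applied to the stabilized inclusion $A^s\subset C^s$, where Proposition \ref{prop:exact2} supplies precisely the right-exactness (surjectivity of $\pi$) that an arbitrary inclusion need not have.
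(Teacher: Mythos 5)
Your proposal is correct and follows essentially the same route as the paper: surjectivity of $\pi$ from Proposition \ref{prop:exact2}, the tautological exact sequence for $f_{A^s}$, Lemma \ref{lem:kernel3} plus $\Ker\pi=\Int(A^s,C^s)$ for the kernel isomorphism, and Lemma \ref{lem:kernel4} applied to $f_{A^s}\circ\pi$ for the image isomorphism. The only cosmetic difference is that the paper also records $\Ker f_{A^s}$ via $\Aut_0(A^s,C^s)$ using Lemma \ref{lem:kernel2}, which your argument does not need.
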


\section{Examples}\label{sec:example} In this section, we shall give some examples
of the Picard groups for unital inclusions of unital $C^*$-algebras obtained by
routine computations. Some other examples can be found in \cite [Corollary 6.7]{Kodaka:generalized}.
We begin this section with a trivial
example.

\begin{exam}\label{exam:trivial} Let $A$ be a $C^*$-algebra.
We regard $A\subset A$ as an inclusion of $C^*$-algebras by the
identity map. Then $\Pic (A, A)\cong \Pic(A)$.
\end{exam}
\begin{proof} Let $\jmath$ be the map from $\Pic(A)$ to $\Pic(A, A)$ defined by
$$
\jmath([X])=[X, X]
$$
for any $[X]\in\Pic(A)$. Then clearly $\jmath$ is a monomorphism of $\Pic(A)$ to $\Pic(A, A)$.
Let $[X, Y]\in \Pic(A, A)$. Since $X$ is an $A-A$-equivalence bimodule,
$\overline{A\cdot X}=X=\overline{X\cdot A}$
by \cite [Proposition1.7]{BMS:quasi}.
Also, since $Y$ is an $A-A$-equivalence bimodule, by \cite [Proposition 1.7]{BMS:quasi},
$$
Y=\overline{A\cdot Y}=\overline{{}_A \la X\, , \, X \ra\cdot Y}=\overline{X\cdot \la X \, , Y \ra_A}
=\overline{X\cdot A}=X .
$$
Thus
$\jmath$ is surjective. Therefore, $\jmath$ is an isomorphism of $\Pic(A)$ onto $\Pic(A, A)$.
\end{proof}

Let $A$ be a unital $C^*$-algebra and we consider the unital inclusion of unital
$C^*$-algebras $\BC1 \subset A$.
Before we give the next example, we prepare a lemma.

\begin{lemma}\label{lem:kernel5} Let $A$ be a unital $C^*$-algebra satisfying the following
sequence
$$
1\longrightarrow\Int(A)\longrightarrow\Aut (A)\longrightarrow\Pic (A)\longrightarrow 1
$$
is exact. Then we have the following exact sequence:
$$
1\longrightarrow \Ker f_A \longrightarrow \Pic (\BC1, A)\overset{f_A}\longrightarrow\Pic (A)\longrightarrow 1.
$$
Furthermore,
$$
\Ker f_A =\{\, [\BC u, A ]\in \Pic (\BC1 , A) \, | \, u\in U(A) \} ,
$$
where $U(A)$ is the group of all unitary elements in $A$.
\end{lemma}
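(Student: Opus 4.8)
The plan is to obtain the exact sequence from surjectivity of $f_A$, and then to pin down $\Ker f_A$ by transporting an arbitrary class onto the trivial bimodule $A$ and reading off a unitary.

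First I would record that $\Aut(\BC1, A)=\Aut(A)$: every automorphism of $A$ restricts to the identity on $\BC1$, so the element of $\Equi(\BC1, A)$ attached to $\beta\in\Aut(A)$ is $(\BC1, A_{\beta})$, whence $f_A([\BC1, A_{\beta}])=[A_{\beta}]$. Since the hypothesis is precisely that $\Aut(A)\to\Pic(A)$ is onto, every class of $\Pic(A)$ equals $[A_{\beta}]$ for some $\beta$, so $f_A$ is surjective; as $f_A$ is a group homomorphism (Section \ref{sec:relation}) with kernel $\Ker f_A$, this already gives the exact sequence $1\to\Ker f_A\to\Pic(\BC1, A)\overset{f_A}{\longrightarrow}\Pic(A)\to 1$.

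For the description of $\Ker f_A$, the inclusion ``$\supseteq$'' is immediate: for a unitary $u\in A$ the pair $(\BC u, A)$ lies in $\Equi(\BC1, A)$ --- it is the bimodule ``$Au$'' of Section \ref{sec:relation} for the inclusion $\BC1\subset A$, noting $K(M(A))=U(A)$ since $u1u^{*}=1$ --- and $f_A([\BC u, A])=[A]$, so $[\BC u, A]\in\Ker f_A$. For ``$\subseteq$'', take $[X, Y]\in\Ker f_A$; then $[Y]=[A]$ in $\Pic(A)$, so fix an $A-A$-equivalence bimodule isomorphism $\Phi : Y\to A$ onto the trivial bimodule. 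Put $X_0=\Phi(X)$, a closed subspace of $A$ inheriting Conditions (1), (2) in Definition \ref{def:pre5} relative to $\BC1\subset A$; in particular $xy^{*}\in\BC1$ and $x^{*}y\in\BC1$ for all $x, y\in X_0$, and $X_0\ne\{0\}$. Choosing $0\ne\eta\in X_0$ and rescaling so that $\eta\eta^{*}=1$, the element $\eta^{*}\eta$ is a nonzero projection lying in $\BC1$, hence equals $1$; thus $u:=\eta$ is unitary, and $xu^{*}\in\BC1$ for every $x\in X_0$ forces $X_0=\BC u$. Since $\Phi$ is injective, $\Phi|_X$ is a bijection of $X$ onto $\BC u$, hence by Lemma \ref{lem:simple} a $\BC1-\BC1$-equivalence bimodule isomorphism, so $[X, Y]=[\BC u, A]$ in $\Pic(\BC1, A)$.

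All the computations involved are routine. The step I expect to be the only real (and still modest) obstacle is the last passage in the ``$\subseteq$'' direction: combining the observation that the $\Equi(\BC1, A)$-conditions force $X_0$ to be one-dimensional and spanned by a unitary of $A$ with the application of Lemma \ref{lem:simple}, which upgrades the bare bijection $\Phi|_X : X\to\BC u$ to an equivalence bimodule isomorphism.
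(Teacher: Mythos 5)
Your proposal is correct and follows essentially the same route as the paper: surjectivity of $f_A$ via the classes $[\BC 1, X_{\beta}]$ coming from $\Aut(A)$, and the kernel computation by transporting $Y$ onto the trivial bimodule $A$ and identifying the image of $X$ as $\BC u$ for a unitary $u\in A$. The only (cosmetic) difference is that you deduce one-dimensionality of $\Phi(X)$ directly from the inner-product conditions, where the paper invokes $\Pic(\BC)=\{[\BC]\}$; both are fine.
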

\begin{proof}
By the assumptions, for any $[X]\in\Pic (A)$, there is an automorphism $\alpha$ of $A$ such that
$[X]=[X_{\alpha}]$ in $\Pic(A)$, where $X_{\alpha}$ is the $A-A$-equivalence bimodule induced by
$\alpha$. Then $\BC 1$ is a closed subspace of $X_{\alpha}$ and by easy computations, we can see that
$[\BC 1 , X_{\alpha}]\in\Pic (\BC1, A)$ and that $f_A ([\BC 1, X_{\alpha}])=[X_{\alpha}]$. Thus
$f_A$ is surjective. Next, we show that
$$
\Ker f_A =\{\, [\BC u, A ]\in \Pic (\BC1 , A) \, | \, u\in U(A) \} .
$$
Let $[X, Y]\in \Ker f_A$. Then $[Y]=[A]$ in $\Pic(A)$. Hence there is an
$A-A$-equivalence bimodule isomorphism $\theta$ of $Y$ onto $A$. Let $Z=\theta(X)$.
Then $[Z, A]\in \Pic(\BC1, A)$ and $[Z, A]=[X, Y]$ in $\Pic(\BC 1, A)$ by easy computations.
Also, since $\Pic(\BC)=\{[\BC]\}$, $Z$ is a vector space over $\BC$ of dimension 1. Thus there is
a unitary element $u$ in $A$ with $||u||=1$ such that $Z=\BC u$. We note that
$$
{}_{\BC} \la u, u \ra ={}_A \la u, u \ra=uu^* \in \BC , \quad
\la u, u \ra_{\BC}=\la u, u \ra_A =u^* u \in \BC .
$$
Since $uu^*$, $u^* u$ are positive numbers and $||u||=1$, $u$ is a unitary element in $A$.
Thus
$$
\Ker f_A \subset \{\, [\BC u, A ]\in \Pic (\BC1 , A) \, | \, u\in U(A) \} .
$$
Furthermore, by Lemma \ref{lem:kernel} we can see that $[\BC u, A]\in \Ker f_A$ for any $u\in U(A)$ . Hence
$$
\Ker f_A = \{\, [\BC u, A ]\in \Pic (\BC1 , A) \, | \, u\in U(A) \} .
$$
\end{proof}

Let $\jmath$ be the map from $\Pic (A)$ to $\Pic(\BC 1, A)$ defined by
$$
\jmath([X_{\alpha}])=[\BC 1, X_{\alpha}]
$$
for any $\alpha \in \Aut (A)$. Then by the proof of Lemma \ref{lem:kernel5}, $\jmath$ is
a homomorphism of $\Pic (A)$ to $\Pic(\BC 1, A)$ with $f_A \circ\jmath=\id$ on $\Pic (A)$.
Also, let $\kappa$ be the map from $U(A)$ to $\Ker f_A$ defined by
$$
\kappa(u)=[\BC u, A]
$$
for any $u\in U(A)$. Then by Lemma \ref{lem:equivalence1}, $\Ker \kappa=U(A\cap A' )$
and $\kappa$ is surjective. Hence we obtain the following example.

\begin{exam}\label{exam:scalar} Let $A$ be a unital $C^*$-algebra satisfying the following
sequence
$$
1\longrightarrow\Int(A)\longrightarrow\Aut (A)\longrightarrow\Pic (A)\longrightarrow 1
$$
is exact. Then $\Pic(\BC 1, A)$ is isomorphic to a semidirect product group of $U(A)/U(A\cap A' )$ by
$\Pic(A)$, that is,
$$
\Pic(\BC 1, A)\cong U(A)/U(A\cap A' )\rtimes_s \Pic(A) .
$$
\end{exam}

We go to the next example. Let $A$ be a unital $C^*$-algebra satisfying the following
sequence
$$
1\longrightarrow\Int(A)\longrightarrow\Aut (A)\longrightarrow\Pic (A)\longrightarrow 1
$$
is exact. Let $n$ be any positive integer with $n\geq 2$.
We consider the unital inclusion of unital $C^*$-algebras
$$
a\in A \mapsto a\otimes I_n \in M_n (A) ,
$$
where $I_n$ is the unit element in $M_n (A)$.

\begin{lemma}\label{lem:surjective2} With the above notation, the homomorphism $f_A$
of
\newline
$\Pic(A, M_n (A))$ to $\Pic(A)$ is surjective.
\end{lemma}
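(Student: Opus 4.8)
The plan is to use the surjectivity of the natural homomorphism $\Aut(A)\to\Pic(A)$, which is built into the hypothesis, and to lift an automorphism realizing a given class in $\Pic(A)$ to an automorphism in $\Aut(A, M_n(A))$; the construction from Section \ref{sec:definition} of an element of $\Equi(A, M_n(A))$ out of such an automorphism then furnishes the required preimage.

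Concretely, I would start with an arbitrary class $[X]\in\Pic(A)$. Since the sequence
$$
1\longrightarrow\Int(A)\longrightarrow\Aut (A)\longrightarrow\Pic (A)\longrightarrow 1
$$
is exact, there is an automorphism $\alpha$ of $A$ with $[X]=[X_{\alpha}]$ in $\Pic(A)$, where $X_{\alpha}$ denotes the $A-A$-equivalence bimodule induced by $\alpha$. Identifying $M_n(A)$ with $A\otimes M_n(\BC)$ and the subalgebra $A$ with $A\otimes I_n$, I would put $\beta=\alpha\otimes\id_{M_n(\BC)}$. This is an automorphism of $M_n(A)$ which maps $A\otimes I_n$ onto itself and whose restriction there is exactly $\alpha$, so that $\beta\in\Aut(A, M_n(A))$.

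Finally, the pair $(X_{\beta}, Y_{\beta})$ associated with $\beta$ as in Section \ref{sec:definition} belongs to $\Equi(A, M_n(A))$, and because $\beta|_A=\alpha$ we have $X_{\beta}=X_{\alpha}$; hence
$$
f_A([X_{\beta}, Y_{\beta}])=[X_{\beta}]=[X_{\alpha}]=[X]
$$
in $\Pic(A)$. Since $[X]\in\Pic(A)$ was arbitrary, this shows that $f_A$ is surjective. I do not expect a genuine obstacle here: the only point to verify is that $\beta$ really lies in $\Aut(A, M_n(A))$ and that $(X_\beta, Y_\beta)$ is the element of $\Equi(A, M_n(A))$ induced by it, both of which are immediate from the tensor-product form of $\beta$ and the definitions recalled in Section \ref{sec:definition}.
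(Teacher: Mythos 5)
Your proposal is correct and is essentially the paper's own argument: the paper likewise picks $\alpha\in\Aut(A)$ with $[X]=[X_\alpha]$ using the exactness hypothesis, forms the $M_n(A)-M_n(A)$-equivalence bimodule $X_{\alpha\otimes\id}$ induced by $\alpha\otimes\id$ (your $Y_\beta$), and notes that $f_A([X_\alpha, X_{\alpha\otimes\id}])=[X_\alpha]=[X]$. The only difference is notational.
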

\begin{proof} Let $X$ be any $A-A$-equivalence bimodule.
Then by the assumptions, there is an automorphism $\alpha$ of $A$ such that
$[X]=[X_{\alpha}]$ in $\Pic(A)$, where $X_{\alpha}$ is the
$A-A$-equivalence bimodule induced by $\alpha$. Let
$X_{\alpha\otimes\id}$ be the $M_n (A)-M_n (A)$-equivalence bimodule induced by $\alpha\otimes\id$. Then
$[X_{\alpha}, X_{\alpha\otimes\id}]\in \Pic(A , M_n (A))$ and
$$
f_{A}([X_{\alpha}, X_{\alpha\otimes\id}])=[X_{\alpha}]=[X]
$$
in $\Pic(A)$. Thus $f_{A}$ is surjective.
\end{proof}

Let $\jmath$ be the map from $\Pic(A)$ to
$\Pic(A, M_n (A))$ defined by
$$
\jmath ([X_{\alpha}])=[X_{\alpha}, X_{\alpha\otimes\id}]
$$
for any $\alpha\in \Aut(A)$. Then $\jmath$ is clearly a homomorphism of $\Pic(A)$ to
$\Pic(A, M_n (A))$ with $f_A \circ\jmath =\id$ on $\Pic(A)$.
Thus $\Pic(A, M_n (A))$ is a semidirect  group of $\Ker f_A$ by $\Pic(A)$ by Lemma \ref{lem:surjective2}.

We compute $\Ker f_A$. Let $[X, Y]\in \Ker f_{A}$. Since $f_A ([X, Y])=[X]$ in $\Pic (A)$,
there is an $A-A$-equivalence bimodule isomorphism $\theta$
of $A$ onto $X$. We prepare the following lemma.

\begin{lemma}\label{lem:unit}Let $A\subset C$ be a unital inclusion of unital $C^*$-algebras.
Let $(X, Y)\in\Equi (A, C)$ satisfying that there is an $A-A$-equivalence bimodule isomorphism
$\theta$ of $A$ onto $X$. Then there is a $\beta \in \Aut_0 (A, C)$ such that
$$
[X, Y]=[A, Y_{\beta}]
$$ in $\Pic(A, C)$, where $Y_{\beta}$ is the $C-C$-equivalence bimodule induced by $\beta$.
\end{lemma}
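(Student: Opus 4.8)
The plan is to extract from $\theta$ a ``unitary vector'' in $Y$ and use it to exhibit $Y$ as the $C-C$-equivalence bimodule induced by an automorphism of $C$ that fixes $A$ pointwise.

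First I would set $\xi_0=\theta(1_A)\in X\subset Y$. Since $\theta$ is an $A-A$-equivalence bimodule isomorphism of the $A-A$-equivalence bimodule $A$ (in which $a\cdot 1=1\cdot a=a$ and ${}_A\la 1,1\ra=\la 1,1\ra_A=1$) onto $X$, we obtain $a\cdot\xi_0=\xi_0\cdot a=\theta(a)$ for every $a\in A$, and, since the $A$-valued inner products on $X$ are the restrictions of the $C$-valued inner products on $Y$ (Definition \ref{def:pre5}), ${}_C\la\xi_0,\xi_0\ra=1_C$ and $\la\xi_0,\xi_0\ra_C=1_C$. Consequently, for every $\eta\in Y$ one has $\eta={}_C\la\xi_0,\xi_0\ra\cdot\eta=\xi_0\cdot\la\xi_0,\eta\ra_C$ and $\eta=\eta\cdot\la\xi_0,\xi_0\ra_C={}_C\la\eta,\xi_0\ra\cdot\xi_0$, and both representations are unique; in particular the maps $c\mapsto c\cdot\xi_0$ and $c\mapsto\xi_0\cdot c$ are bijections of $C$ onto $Y$.

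Next I would define $\beta\colon C\to C$ by the rule $\xi_0\cdot c=\beta(c)\cdot\xi_0$, which makes sense by the previous step, and equivalently $\beta(c)={}_C\la\xi_0\cdot c,\xi_0\ra$. Straightforward use of the bimodule axioms shows that $\beta$ is linear, multiplicative, unital and bijective; for $*$-preservation one combines ${}_C\la\xi_0\cdot c,\xi_0\ra^*={}_C\la\xi_0,\xi_0\cdot c\ra$ with ${}_C\la\xi,\eta\cdot d\ra={}_C\la\xi\cdot d^*,\eta\ra$ to get $\beta(c)^*=\beta(c^*)$, so $\beta$ is an automorphism of $C$. Moreover $\xi_0\cdot a=a\cdot\xi_0$ for $a\in A$ forces $\beta(a)=a$ by uniqueness, so $\beta\in\Aut_0 (A,C)$; in particular $X_\beta$, the $A-A$-equivalence bimodule induced by $\beta|_A=\id_A$, is the $A-A$-equivalence bimodule $A$.

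Finally I would define $\Phi\colon Y_\beta\to Y$ by $\Phi(c)=c\cdot\xi_0$. It is a bijection by the first step; it intertwines the left $C$-actions, since $\Phi(c'c)=(c'c)\cdot\xi_0=c'\cdot\Phi(c)$; and it intertwines the right $C$-actions, since in $Y_\beta$ the right action is $c\cdot_{Y_\beta}c'=c\beta(c')$, whence $\Phi(c\cdot_{Y_\beta}c')=(c\beta(c'))\cdot\xi_0=c\cdot(\beta(c')\cdot\xi_0)=c\cdot(\xi_0\cdot c')=\Phi(c)\cdot c'$. Using ${}_C\la\xi_0,\xi_0\ra=1=\la\xi_0,\xi_0\ra_C$ together with $c\cdot\xi_0=\xi_0\cdot\beta^{-1}(c)$ one checks ${}_C\la\Phi(x),\Phi(y)\ra=xy^*$ and $\la\Phi(x),\Phi(y)\ra_C=\beta^{-1}(x^*y)$, which are exactly the inner products of $Y_\beta$; hence $\Phi$ is a $C-C$-equivalence bimodule isomorphism of $Y_\beta$ onto $Y$. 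Since $\Phi|_{X_\beta}=\Phi|_A$ sends $a$ to $a\cdot\xi_0=\theta(a)$, we get $\Phi(X_\beta)=\theta(A)=X$, so by Lemma \ref{lem:simple} $\Phi|_{X_\beta}$ is an $A-A$-equivalence bimodule isomorphism of $X_\beta$ onto $X$. Therefore $(A,Y_\beta)\sim(X,Y)$, that is, $[X,Y]=[A,Y_\beta]$ in $\Pic(A,C)$. The main obstacle is the bookkeeping in the middle step --- verifying that $\beta$ is genuinely a $*$-automorphism and that $\Phi$ carries the two inner products of $Y_\beta$ to those of $Y$ with the correct $\beta$-twist --- while the single point that is not merely routine is the observation that $\theta$, coming from the $A-A$-equivalence bimodule $A$, intertwines the left and right $A$-actions, which is precisely what pins $\beta$ down to be the identity on $A$ and lands us in $\Aut_0 (A,C)$ rather than only in $\Aut(A,C)$.
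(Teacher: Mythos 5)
Your proposal is correct and follows essentially the same route as the paper: both take $\xi_0=\theta(1)$, define the same automorphism $\beta(c)={}_C\la \xi_0\cdot c,\xi_0\ra$ (the paper citing Rieffel for why this is an automorphism, you verifying it directly), and exhibit a bimodule isomorphism between $Y$ and $Y_\beta$ carrying $X$ to $A$ --- yours is just the inverse map $c\mapsto c\cdot\xi_0$ of the paper's $y\mapsto{}_C\la y,\theta(1)\ra$.
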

\begin{proof} By the discussions in \cite [Section 2]{KT4:morita} and Rieffel \cite [Proposition 2.1]{Rieffel:rotation},
there is an automorphism $\beta$ of $C$ defined by
$$
\beta(c)={}_C \la \theta(1)\cdot c \, , \, \theta(1) \ra
$$
for any $c\in C$ since
$$
{}_C \la \theta(1 ) \, , \, \theta(1 ) \ra={}_A \la \theta(1) \, , \, \theta(1) \ra ={}_A \la 1 \, , \ 1 \ra=1
$$
and also we have $\la \theta(1) \, , \, \theta(1)\ra_C =1$, where $1$ is the unit element of $A$.
Then for any $a\in A$
$$
\beta (a)={}_C \la \theta(1)\cdot a \, , \, \theta(1) \ra ={}_C \la \theta (a) \, , \, \theta(1) \ra
={}_A \la \theta (a) , \theta (1) \ra ={}_A \la a, 1 \ra =a .
$$
Hence $\beta\in \Aut_0 (A, C)$. Let $\rho$ be the linear map from $Y$ to $Y_{\beta}$ defined by
$$
\rho(y)={}_C \la y \, , \, \theta(1) \ra 
$$
for any $y\in Y$. For any $c\in C$, $c\cdot \theta(1)\in Y$ and
$$
\rho (c\cdot \theta(1)) ={}_C \la c\cdot \theta(1) \, , \, \theta(1) \ra =c\, {}_C \la \theta(1) \, , \, \theta(1) \ra 
=c .
$$
Hence $\rho$ is surjective. Also, for any $y, z\in Y$,
\begin{align*}
{}_C \la \rho(y) \, , \, \rho(z) \ra& ={}_C \la \, {}_C \la y \, , \, \theta(1) \ra \, , \, {}_C \la z \, , \, \theta(1) \ra \ra
={}_C \la y \, , \, \theta(1) \ra \, {}_C \la \theta(1) \, , \, z \ra \\
& ={}_C \la \, {}_C \la y \, , \, \theta(1) \ra \cdot \theta(1) \, , \, z \ra
={}_C \la y\cdot \la \theta(1) \, , \, \theta(1) \ra_C \, , \, z \ra \\
& ={}_C \la y\cdot \la 1 \, , \, 1 \ra_A \, , \, z \ra ={}_C \la y \, , \, z \ra ,
\end{align*}
\begin{align*}
\la \rho(y) \, , \, \rho(z) \ra_C & =\la\, {}_C \la y \, , \, \theta(1) \ra \, , \, {}_C \la z \, , \, \theta(1) \ra \ra_C \\
& =\beta^{-1}( \, {}_C \la \theta(1) \, , \, y \ra \, {}_C \la z \, , \, \theta(1) \ra ) \\
& =\beta^{-1}( \, {}_C \la \, {}_ C \la \theta(1) \, , \, y \ra \cdot z \, , \, \theta(1) \ra) \\
& =\beta^{-1}(\, {}_C \la \theta(1)\cdot \la y \, , \, z \ra_C \, , \, \theta(1) \ra) \\
& =(\beta^{-1}\circ\beta)(\la y \, , \, z \ra_C )=\la y \, , \, z \ra_C .
\end{align*}
Hence
$\rho$ is a $C-C$-equivalence bimodule isomorphism of $Y$ onto $Y_{\beta}$.
Furthermore, for any $a\in A$,
$$
\rho(\theta(a))={}_C \la \theta(a) \, , \, \theta(1) \ra={}_A \la \theta(a) \, , \, \theta(1) \ra
={}_A \la a \, , \, 1 \ra=a .
$$
Therefore $[X, Y]=[A, Y_{\beta}]$ in $\Pic(A, C)$.
\end{proof}

We return to the unital inclusion of unital $C^*$-algebras $a\in A\mapsto a\otimes I_n\in M_n (A)$.
By Lemma \ref{lem:unit},
$$
\Ker f_A =\{[A, Y_{\beta}]\in\Pic(A, M_n (A)) \, | \, \beta\in \Aut_0 (A, M_n (A))\} .
$$
By Lemma \ref{lem:exact1}
\begin{align*}
& \Ker \, \pi\, \cap\Aut_0 (A, M_n (A)) \\
& =\{\Ad(u)\in\Aut_0 (A, M_n (A)) \, | \, \text{$u$ is a unitary element in $A\cap A'$}\} ,
\end{align*}
where $\pi$ is the homomorphism of $\Aut (A, M_n (A))$ to $\Pic (A, M_n (A))$ defined in Section \ref{sec:definition}.
But $\Ker \, \pi\, \cap \Aut_0 (A, M_n (A))=\{1\}$ since $(u\otimes I_n ) a=a(u\otimes I_n )$ for any unitary element
$u\in A\cap A'$, $a\in M_n (A)$. Thus we obtain the following example.

\begin{exam} Let $A$ be a unital $C^*$-algebra satisfying the following
sequence
$$
1\longrightarrow\Int(A)\longrightarrow\Aut (A)\longrightarrow\Pic (A)\longrightarrow 1
$$
is exact. Let $n$ be a positive number with $n\ge 2$.
Then $\Pic(A, M_n (A))$ is isomorphic to a semidirect product group of $Aut_0 (A, M_n (A))$ by
$\Pic(A)$, that is,
$$
\Pic(A, M_n (A))\cong \Aut_0 (A, M_n (A))\rtimes_s \Pic(A) .
$$
\end{exam}

\end{document}